\definecolor{codegreen}{rgb}{0,0.6,0}
\definecolor{codegray}{rgb}{0.5,0.5,0.5}
\definecolor{codepurple}{rgb}{0.58,0,0.82}
\definecolor{backcolour}{rgb}{0.95,0.95,0.92}
\lstdefinestyle{mystyle}{
	backgroundcolor=\color{backcolour},   
	commentstyle=\color{codegreen},
	keywordstyle=\color{magenta},
	numberstyle=\footnotesize\color{codegray},
	stringstyle=\color{codepurple},
	basicstyle=\ttfamily\small,
	breakatwhitespace=false,         
	breaklines=true,                 
	captionpos=b,                    
	keepspaces=true,                 
	numbers=left,                    
	numbersep=5pt,                  
	showspaces=false,                
	showstringspaces=false,
	showtabs=false,                  
	tabsize=2
}
\definecolor{seagreen}{rgb}{0.18, 0.55, 0.34}
\definecolor{mediumviolet-red}{rgb}{0.78, 0.08, 0.52}
\definecolor{khaki}{rgb}{0.94, 0.9, 0.55}
\lstdefinelanguage{mypython}
{
	keywords=[1]{from, import, assert, not, print},
	keywordstyle=[1]{\color{mediumviolet-red}},
	keywords=[2]{surecr, torch, cp, lo, pl},
	keywordstyle=[2]{\color{seagreen}},
	numbers=none,
	upquote=true,
	showstringspaces=false,
	basicstyle=\ttfamily,
	columns=fullflexible,
	keepspaces=true,
	emph={True,False,as,def,return,float,class,match,switch,len},
	emphstyle={\color{seagreen}},
	frame=trBL,
	belowskip=1em,
	aboveskip=1em,
	captionpos=b
}
\crefname{equation}{}{}
\crefname{chapter}{Chapter}{Chapters}
\crefname{item}{item}{items}
\crefname{figure}{Figure}{Figures}
\crefname{theorem}{Theorem}{Theorems}
\crefname{lemma}{Lemma}{Lemmas}
\crefname{proposition}{Proposition}{Propositions}
\crefname{corollary}{Corollary}{Corollarys}
\crefname{definition}{Definition}{Definitions}
\crefname{fact}{Fact}{Facts}
\crefname{example}{Example}{Examples}
\crefname{algorithm}{Algorithm}{Algorithms}
\crefname{remark}{Remark}{Remarks}
\crefname{note}{Note}{Notes}
\crefname{notation}{Notation}{Notations}
\crefname{case}{Case}{Cases}
\crefname{exercise}{Exercise}{Exercises}
\crefname{question}{Question}{Questions}
\crefname{claim}{Claim}{Claims}
\crefname{enumi}{}{}
\numberwithin{equation}{section}
\NewDocumentCommand{\lplabel}{o m}{%
	\makebox[0pt][r]{#2\hspace*{2em}}%
	\IfNoValueF{#1}
	{\def\@currentlabel{#2}\ltx@label{#1}}
}
\theoremstyle{plain}
\newtheorem{theorem}{Theorem}[section]
\newtheorem{corollary}{Corollary}[section]
\newtheorem{fact}{Fact}[section]
\newtheorem{lemma}{Lemma}[section]
\newtheorem{proposition}{Proposition}[section]
\theoremstyle{definition}
\newtheorem{example}{Example}[section]
\newcommand{\minimize}{\ensuremath{\operatorname{minimize}}}
\newcommand{\maximize}{\ensuremath{\operatorname{maximize}}}
\newcommand{\inte}{\ensuremath{\operatorname{int}}}
\newcommand{\weakly}{\ensuremath{{\;\operatorname{\rightharpoonup}\;}}}
\newcommand{\gra}{\ensuremath{\operatorname{gra}}}
\newcommand{\Id}{\ensuremath{\operatorname{Id}}}
\newcommand{\Pro}{\ensuremath{\operatorname{P}}}
\newcommand{\Prox}{\ensuremath{\operatorname{Prox}}}
\newcommand{\argmin}{\mathop{\rm argmin}}
\newcommand{\argmax}{\mathop{\rm argmax}}
\providecommand{\abs}[1]{\left|#1\right|}
\providecommand{\norm}[1]{\left\lVert#1\right\rVert}
\providecommand{\innp}[1]{\left\langle#1\right\rangle}
\begin{document}

\title{Alternating Proximity Mapping Method for Convex-Concave Saddle-Point 
Problems}

\author{
	 Hui Ouyang\thanks{Department of Electrical Engineering, Stanford University.
		E-mail: \href{mailto:houyang@stanford.edu}{\texttt{houyang@stanford.edu}}.}
}

\date{October 30, 2023}

\maketitle

\begin{abstract}
We proposed an iterate scheme for solving convex-concave saddle-point problems
associated with general convex-concave functions. 
We demonstrated that when our iterate scheme is applied to a special class of 
convex-concave functions, which are constructed by a bilinear coupling term plus a 
difference of two convex functions,
it becomes a generalization of several popular primal-dual algorithms
from constant involved parameters to involved parameters as general 
sequences.
For this specific class of convex-concave functions, 
we proved that the sequence of function values, 
taken over the averages of iterates generated by our scheme,
converges to the  value of the function at a saddle-point. 
Additionally,
we provided convergence results for both the sequence of averages of our iterates
and the sequence of our iterates.

In our numerical experiments, 
we implemented our algorithm in a matrix game, a linear program in inequality form,
and a least-squares problem with $\ell_{1}$ regularization.
In these examples, we also compared our algorithm with other primal-dual algorithms
where parameters in their iterate schemes were kept constant.
Our experimental results not only validated our theoretical findings but also 
demonstrated that our algorithm consistently outperforms various iterate schemes with 
constant involved parameters.
\end{abstract}

{\small
	\noindent
	{\bfseries 2020 Mathematics Subject Classification:}
	{
		Primary 90C25, 47J25, 47H05;  
		Secondary 65J15,    90C30.
	}

\noindent{\bfseries Keywords:}
Convex-Concave Saddle-Point Problems, 
Proximity Mapping, Convergence,  Linear Program, 
Least-Squares Problem, Matrix Game
}


 \section{Introduction}
 In the whole work,  
 $\mathcal{H}_{1}$ and $\mathcal{H}_{2}$ are Hilbert spaces and   
 $X \subseteq  \mathcal{H}_{1}$ and $Y \subseteq \mathcal{H}_{2}$ 
 are nonempty closed and convex sets. 
 The Hilbert direct sum $\mathcal{H}_{1} \times \mathcal{H}_{2}$ of  $\mathcal{H}_{1}$ 
 and $\mathcal{H}_{2}$ is equipped with the inner product 
\[ 
(\forall (x, y) \in X \times Y ) (\forall (u,v) \in X \times Y) \quad \innp{(x,y), (u,v)} = 
\innp{x,u} +\innp{y,v}
\]
 and the induced norm
\[ 
(\forall (x, y) \in X \times Y ) \quad \norm{(x, y) }^{2} =\innp{(x,y), (x,y)}=\innp{x,x} 
 	+\innp{y,y} =\norm{x}^{2} + \norm{y}^{2}. 
\]
Throughout this work, $f : X \times Y \to \mathbf{R} \cup \{ - \infty, + \infty \}$ satisfies 
that 
$(\forall y \in Y)$ $f(\cdot, y) : X \to \mathbf{R}  \cup \{ + \infty\}$ is proper and convex, 
and  that 
$(\forall x \in X)$ $f(x, \cdot) : Y \to \mathbf{R}  \cup \{ - \infty\}$ is proper and concave. 
(It is referred to as convex-concave function from now on.)

Our goal in this work is to solve the following \emph{convex-concave saddle-point 
problem}
\begin{align}\label{eq:problem}
\underset{x \in X}{\minimize} ~\,	\underset{y \in Y}{\maximize}  ~f(x,y).
\end{align}
We assume  that  the solution set of \cref{eq:problem} is nonempty, 
 that is, there exists a \emph{saddle-point} $(x^{*}, y^{*}) \in X \times Y$ of $f$
 satisfying 
 \begin{align}\label{eq:solution}
 (\forall x \in X) (\forall y \in Y) \quad f(x^{*}, y)  \leq  f(x^{*}, y^{*})  \leq f(x, y^{*}).
 \end{align}

 We  point out some notation used in this work.  
 $\mathbf{R}$, $\mathbf{R}_{+}$, $\mathbf{R}_{++}$, and $\mathbf{N}$  are the set of all 
 real numbers, the set of all nonnegative real numbers, the set of all positive real numbers, 
 and the set of all nonnegative integers, respectively.  $\mathcal{B}(\mathcal{H}_{1} , 
 \mathcal{H}_{2} ) := \{ T: \mathcal{H}_{1} \to 
 \mathcal{H}_{2} ~:~ T \text{ is linear and continuous} \}$
 is the set of all linear and continuous operators from $\mathcal{H}_{1}$ to  
 $\mathcal{H}_{2}$.
 
 Let $\mathcal{H}$ be a Hilbert space. 
 $\Id :\mathcal{H}\to \mathcal{H}: x \mapsto x$ is the \emph{identity operator}.
 $2^{\mathcal{H}}$ is the \emph{power set of $\mathcal{H}$}, i.e., 
 the family of all subsets of $\mathcal{H}$.
 Let $F: \mathcal{H} \to 2^{\mathcal{H}}$ be a set-valued operator. 
 The \emph{graph of $F$} is $\gra F:= \{ (x,u) \in \mathcal{H} \times \mathcal{H} ~:~ u \in 
 Fx \}$.
 Let $C$ be a nonempty subset of $\mathcal{H}$. 
 $\inte C$ signifies the \emph{interior of the set $C$}.  If $C$ is   nonempty closed and 
 convex, 
 then the \emph{projector} (or \emph{projection operator}) onto $C$ is the operator, 
 denoted by $\Pro_{C}$,  that maps every point in $\mathcal{H}$ to its unique projection 
 onto $C$, that is, $(\forall x \in \mathcal{H})$ $\norm{x - \Pro_{C}x} = \inf_{c \in C} 
 \norm{x-c}$.  
 Let $g: \mathcal{H} \to \left]-\infty, 
 +\infty\right]$ be proper. The \emph{subdifferential of $g$} is the set-valued operator 
\[  
 \partial g: \mathcal{H} \to 2^{\mathcal{H}}: x \mapsto \{ u \in \mathcal{H} 
 ~:~ (\forall y \in \mathcal{H})~ \innp{u, y-x} + f(x) \leq f(y) \}.
\]
 Suppose that $g$ is proper, convex, and lower semicontinuous. 
 The \emph{proximity mapping $\Prox_{g}$ of $g$} is defined by
\[ 
\Prox_{g}: \mathcal{H} \to \mathcal{H} : x \mapsto \argmin_{y \in \mathcal{H}} \left(  g(y) 
+\frac{1}{2} \norm{x-y}^{2} \right).
\]
\subsection{Related work}
 Various algorithms for solving convex-concave saddle-point problems associated 
 with  a general convex-concave function can be found in 
 \cite{Korpelevich1976extragradient}, 
 \cite{Nesterov2009}, 
 \cite{NedicOzdaglar2009},   \cite{WandLi2020improved}, 
 \cite{HamedaniAybat2021},  \cite{ZhangWangLessardGrosse2022}, 
 \cite{BoctCsetnekSedlmayer2022accelerated},
 \cite{SLB2023},  and \cite{Oy2023subgradient}.  
 In this case, normally there are strong assumptions 
regarding the differentiability, strong convexity,
or     subgradients of $f$ in convergence proofs. 
 
 Let $f: X \times Y \to \mathbf{R} \cup \{ -\infty, + \infty\}$ defined as
\begin{align} \label{eq:fspecialform}
(\forall (x,y) \in X \times Y) \quad 	f(x,y) =\innp{Kx, y} + g(x)  -h(y),
\end{align} 
where $K \in \mathcal{B}(\mathcal{H}_{1} , \mathcal{H}_{2} )$, and 
$g: \mathcal{H}_{1} \to \mathbf{R} \cup \{ +\infty\}$ and $h: \mathcal{H}_{2} \to  
\mathbf{R} \cup \{ 
+\infty\}$  are proper, convex, and lower semicontinuous. 
Because convex-concave saddle-point problems associated  with $f$ in the form of 
\cref{eq:fspecialform} arise in a wide range of applications 
including image problems, total variation minimization problems, empirical risk 
minimization, reinforcement learning,  bilinear min-max problems, etc, 
there are many papers working on this special case.
(See, e.g., \cite{PockCremersBischofChambolle2009algorithm}, 
\cite{EsserZhangChan2010}, \cite{HeYuan2010convergence}, \cite{ChambollePock2011}, 
\cite{DuHu2019linear}, \cite{KovalevGasnikovRichtarik2022accelerated} and some 
references therein.)
In fact, our  iterate scheme \cref{eq:lemma:algorithmsymplity}   for 
convex-concave saddle-point problems associated with $f$ in the form of 
\cref{eq:fspecialform} is a generalization (in terms of involved parameters) 
of iterate schemes worked in 
\cite{PockCremersBischofChambolle2009algorithm}, \cite{EsserZhangChan2010},
 \cite{HeYuan2010convergence}, and \cite{ChambollePock2011} 
although we have different assumptions of involved parameters in convergence proofs.
 
 It is mentioned in multiple papers (see, e.g., \cite{Popov1980modification}, 
 \cite{ZhuChan2008efficient}, \cite{ChambollePock2011},  and \cite{HeXuYuan2022}) 
 that the Arrow-Hurwicz method  is a classical primal-dual methods for solving 
 saddle-point problems, and also the basis of multiple important algorithms such as 
 the extragradient method and the primal-dual hybrid gradient method. In 
 \cite{ZhuChan2008efficient} (similarly in\cite{ChambollePock2011} and 
 \cite{HeXuYuan2022}), the Arrow-Hurwicz method is described as a  primal-dual 
 proximal-point method:
 \begin{align}\label{eq:pdppm}
 	&x^{k+1} =\argmax_{x \in X} \Phi(y^{k},x) - \frac{1}{2 \lambda \tau_{k}} \norm{x 
 		-x^{k}}^{2}\\
 	&y^{k+1} = \argmin_{y \in \mathbf{R}^{N}} \Phi(y,x^{k+1}) + \frac{\lambda 
 		(1-\theta_{k})}{2 \theta_{k}} \norm{y-y^{k}}^{2}. 
 \end{align}
In the attached Appendix, we shall distinguish the Arrow-Hurwicz method and the  
primal-dual proximal-point method presented above. 
Additionally, we will show \cref{lemma:pmsm} below to conclude that 
it's an open question if all algorithms with projected subgradients 
 (e.g., some special cases of the Arrow-Hurwicz method)
have their counterparts  of algorithms with  proximity mappings 
(e.g., primal-dual proximal-point methods) 
in closed forms.

\subsection{Comparison with related work}
This work is motivated by \cite[Section~3]{ChambollePock2011}. 
The authors of \cite{ChambollePock2011} considered only the convex-concave 
saddle-point problem 
associated with a convex-concave function
in the form of \cref{eq:fspecialform}.
In \cite[Theorem~1]{ChambollePock2011}, they considered the 
convergence of the iterates $((x^{k},y^{k}))_{k \in \mathbf{N}}$ generated by 
the iterate scheme \cref{eq:lemma:algorithmsymplity} below with
$(\forall k \in \mathbf{N})$ $\sigma_{k} \equiv 
\sigma \in \mathbf{R}_{++}$, $\tau_{k} \equiv \tau \in \mathbf{R}_{++}$, $\alpha_{k} 
\equiv 1$, and $\beta_{k} \equiv 0$ 
under the assumption of  $\tau \sigma \norm{K}^{2} <1$. 
Moreover, in \cite[Section~5]{ChambollePock2011}, 
under some assumptions regarding the uniformly convexity of $g$ or $h$ in 
\cref{eq:fspecialform}, 
the authors worked on some acceleration of the
convergence of the iterates $((x^{k},y^{k}))_{k \in \mathbf{N}}$ generated by 
some special cases of  the iterate scheme
\cref{eq:lemma:algorithmsymplity} with some restrictions on involved  parameters. 
Suppose that $f$ is a general convex-concave function and that $(x^{*}, y^{*}) \in X 
\times Y$ is a saddle-point of $f$. 
We compare this work with  \cite{ChambollePock2011} as follows.
\begin{itemize}
	\item Our iterate scheme \cref{eq:PPalgorithm:1} is constructed for convex-concave 
	saddle-point problems presented in \cref{eq:problem} associated with a general 
	convex-concave function $f$. 
	Even our \cref{eq:lemma:algorithmsymplity} 
	(that is \cref{eq:PPalgorithm:1} used 
	specifically for $f$ in the special form of \cref{eq:fspecialform}) 
	is more general than all iterate schemes  considered in \cite{ChambollePock2011}. 
	
	Suppose that $((x^{k},y^{k}))_{k \in \mathbf{N}}$  is generated by 
	\cref{eq:PPalgorithm:1} and $((\hat{x}^{k}, \hat{y}^{k}))_{k \in \mathbf{N}}$ 
	is constructed by \cref{eq:x_ky_k} below.
	Although we didn't have any convergence result on $((x^{k},y^{k}))_{k \in 
	\mathbf{N}}$, $(f(x^{k},y^{k}))_{k \in 	\mathbf{N}}$, or $(f(\hat{x}^{k}, 
	\hat{y}^{k}))_{k \in \mathbf{N}}$ for a general convex-concave $f$, 
	we provide in \cref{proposition:fxyk+1-xyhat} upper and lower bounds for 
	the sequence $\left( f(\hat{x}_{k}, \hat{y}_{k}) -f(x^{*}, y^{*}) \right)_{k \in 
	\mathbf{N}}$,
	which plays a critical role in the proof of one of our convergence results later 
	and may be 
	useful to prove $f(\hat{x}_{k},\hat{y}_{k}) \to f(x^{*},y^{*})$ for a general
	 convex-concave function f.
	\item We consider $f$ in the form of \cref{eq:fspecialform} below. 
	\begin{itemize}
		\item  The authors in  \cite{ChambollePock2011}  worked only  on the convergence 
		of 	the sequence $((x^{k},y^{k}))_{k \in \mathbf{N}}$ of iterates  generated by 
		their schemes. 
		We not only proved the convergence of $\left((x^{k},y^{k}) \right)_{k 
		\in \mathbf{N}}$ to a saddle-point of $f$, but we also showed the convergence of
		$\left((\hat{x}_{k},\hat{y}_{k}) \right)_{k \in \mathbf{N}}$
		and  $f(\hat{x}_{k}, 	\hat{y}_{k}) 	\to f(x^{*}, y^{*})  $, where 
		$((\hat{x}^{k}, \hat{y}^{k}))_{k \in \mathbf{N}}$ is given in \cref{eq:x_ky_k} below. 
		
		\item In \cite[Section~5]{ChambollePock2011}, 
		the authors worked on some accelerated iterate schemes
		(these are some special cases of the iterate scheme 
		\cref{eq:lemma:algorithmsymplity} below
		with involved parameters satisfying certain rules) 
		of their original iterate scheme
		(that is the scheme \cref{eq:lemma:algorithmsymplity} 
		below with $(\forall k \in \mathbf{N})$ 
		$\sigma_{k} \equiv \sigma $, $\tau_{k} \equiv \tau $, 
		 $\alpha_{k} \equiv 1$, and $\beta_{k} \equiv 0$ ) 
		 under some uniformly convexity or  smoothness assumptions, 
		 but	in this work,  
		 we did not consider uniformly convexity or smoothness assumption. 
		
		\item \cite[Section~6]{ChambollePock2011}  provided applications of their 
		theoretical results in	image problems, 
		while we verified our theoretical results in numerical experiments on 
		matrix game,
		linear program in inequality form, and
		least-squares problem with $\ell_{1}$ regularization. 
		
		In our numerical experiments, we presented some examples in which 
		the sequences $\left(  f(x^{k}, y^{k})  \right)_{k \in \mathbf{N}}$ and 
		$\left(  f(\hat{x}_{k},\hat{y}_{k}) \right)_{k \in \mathbf{N}}$ satisfying assumptions 
		in our theoretical results perform better than 
		corresponding sequences satisfying  assumptions in   theoretical results of 
		\cite{ChambollePock2011}. 
		(See,  \cref{fig: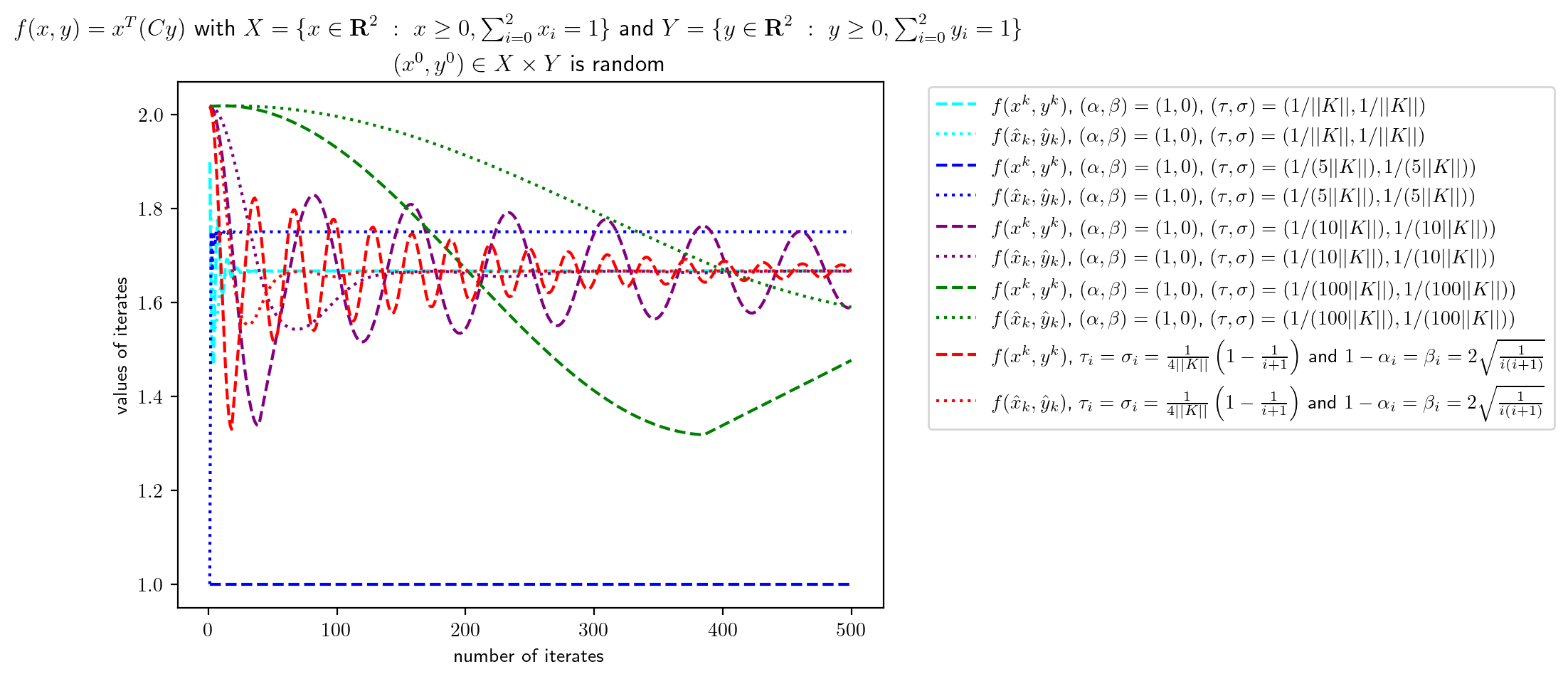},
		\cref{fig: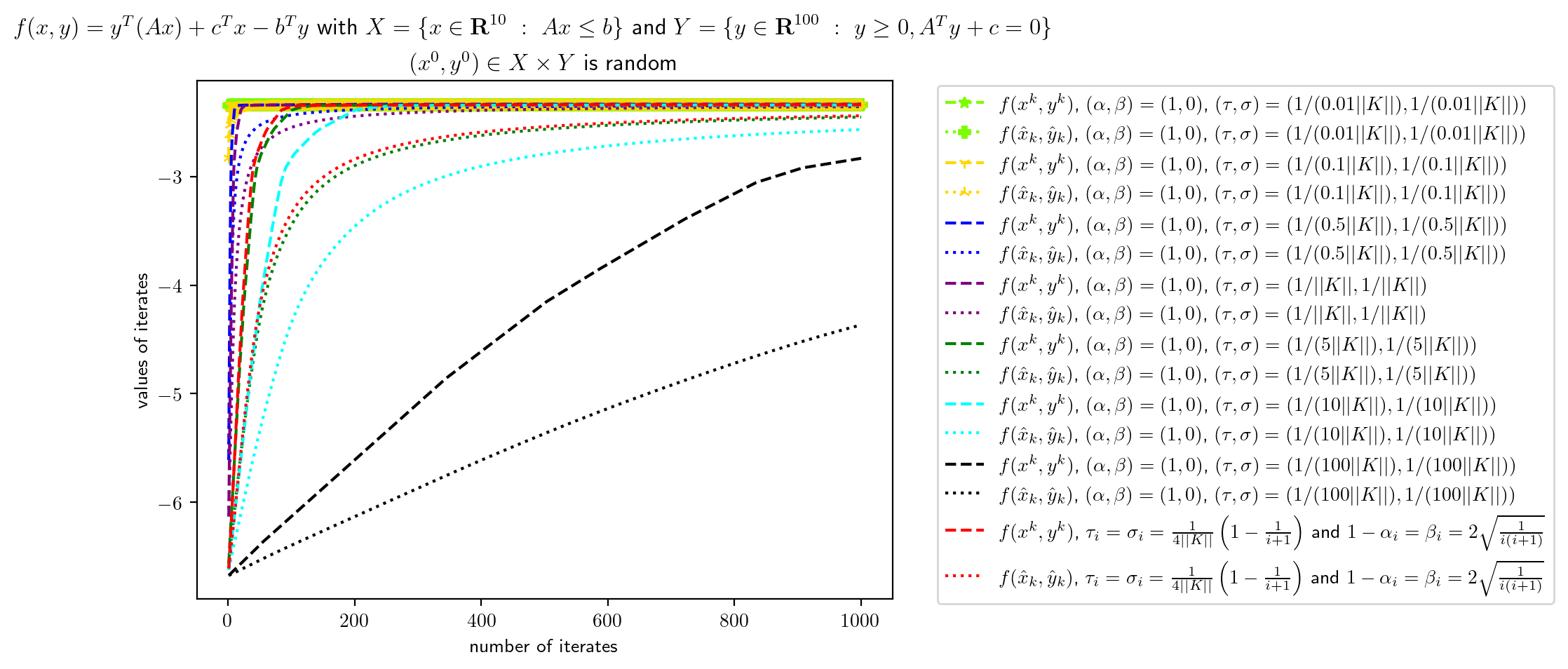}, and
		\cref{fig: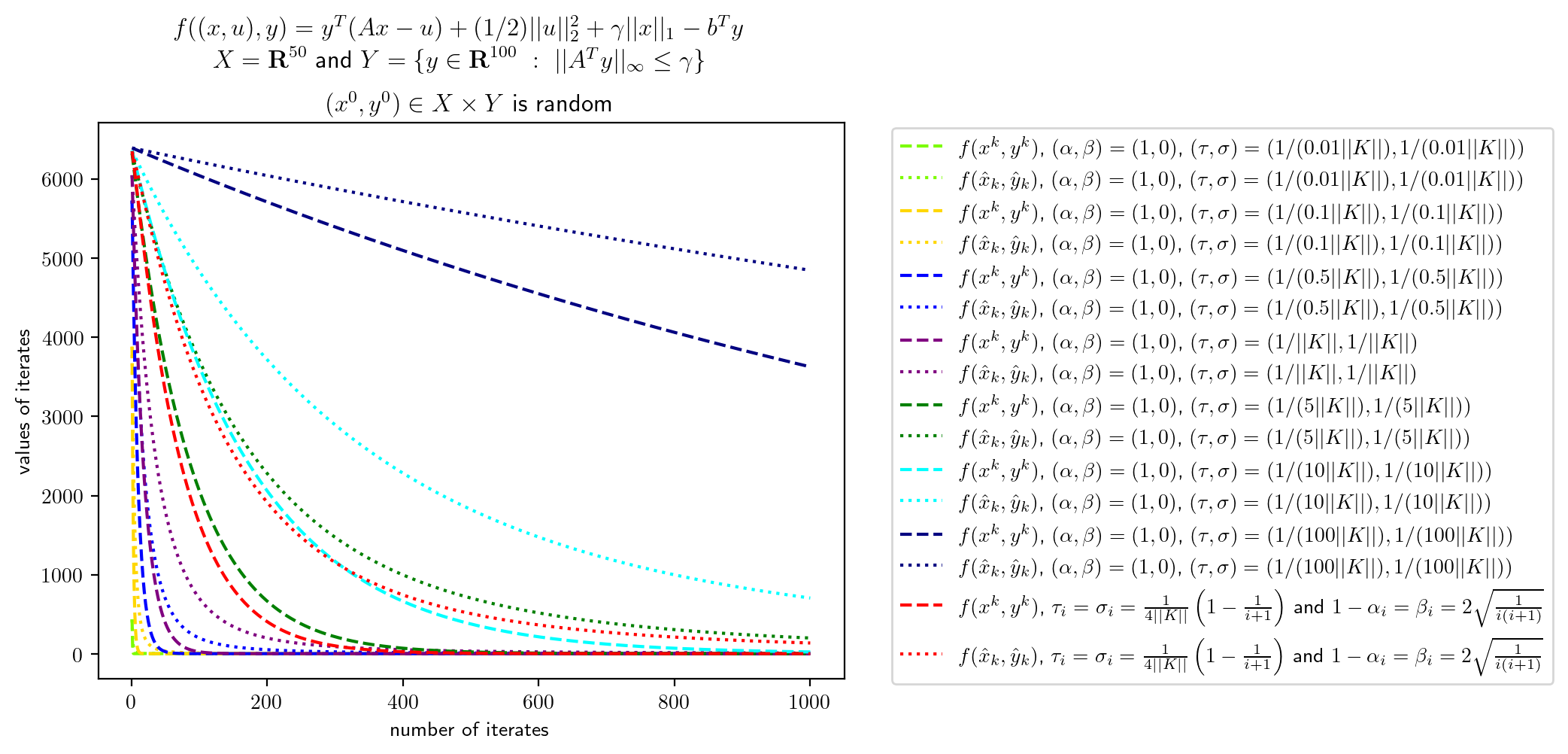}.)
		
		In our numerical experiments, we  also displayed some convergence results
		on the sequence $\left(  f(x^{k}, y^{k})  \right)_{k \in \mathbf{N}}$, 
		which, to the best of our knowledge, lacks of theoretical support as of now.  
		(See, 
		\cref{fig:appmg_proximity_methods_2_00.png}, 
		\cref{fig: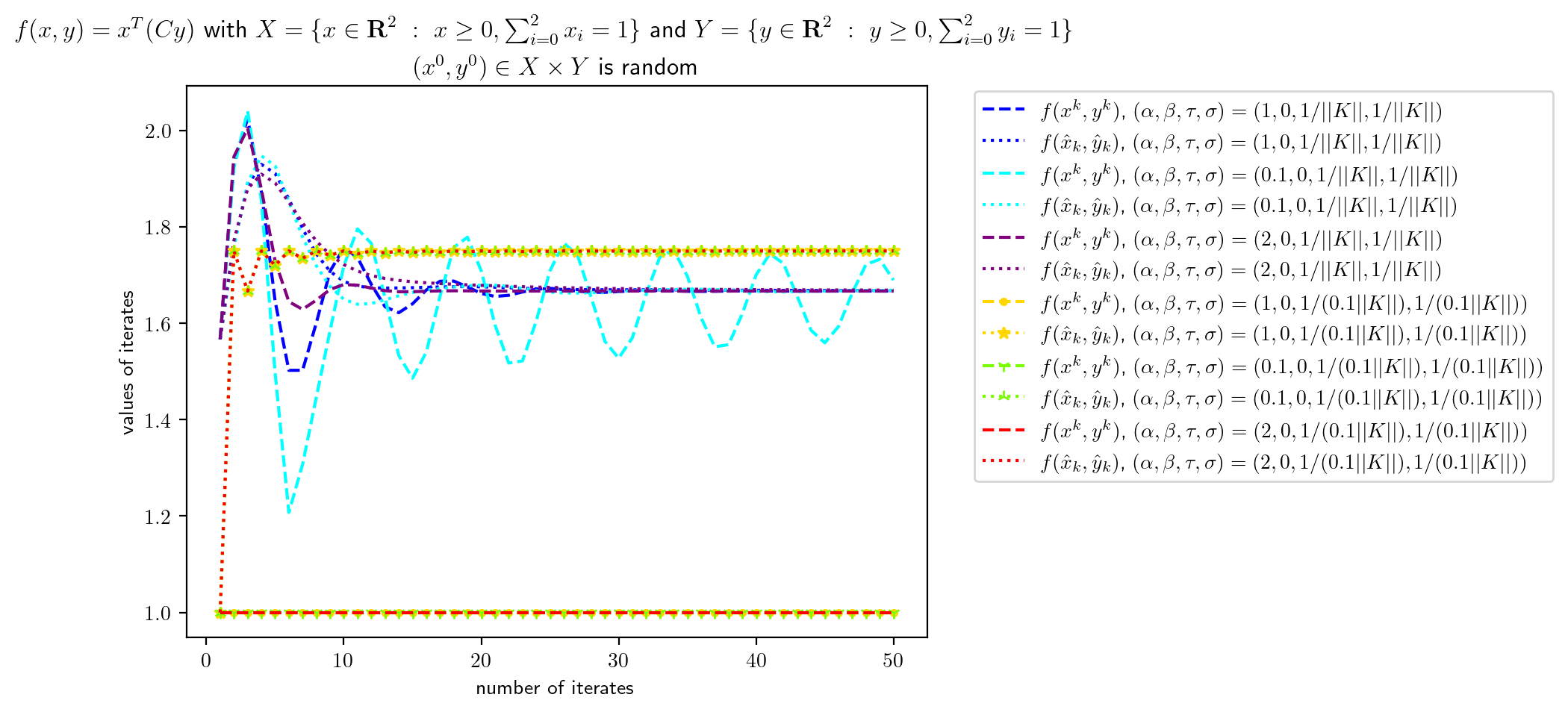},
	 \cref{fig:proximal_point_algorithm_lp_tausigma_00.png}, and
	 \cref{fig:proximal_point_algorithm_lsl1_tausigma_00.png}.)
	 Moreover, we discovered some interesting consistence of the sequences 
	 $\left(  f(x^{k}, y^{k})  \right)_{k \in \mathbf{N}}$ and 
	 $\left(  f(\hat{x}_{k},\hat{y}_{k}) \right)_{k \in \mathbf{N}}$  with different 
		 choices of  step-size. 
	 (See, \cref{fig: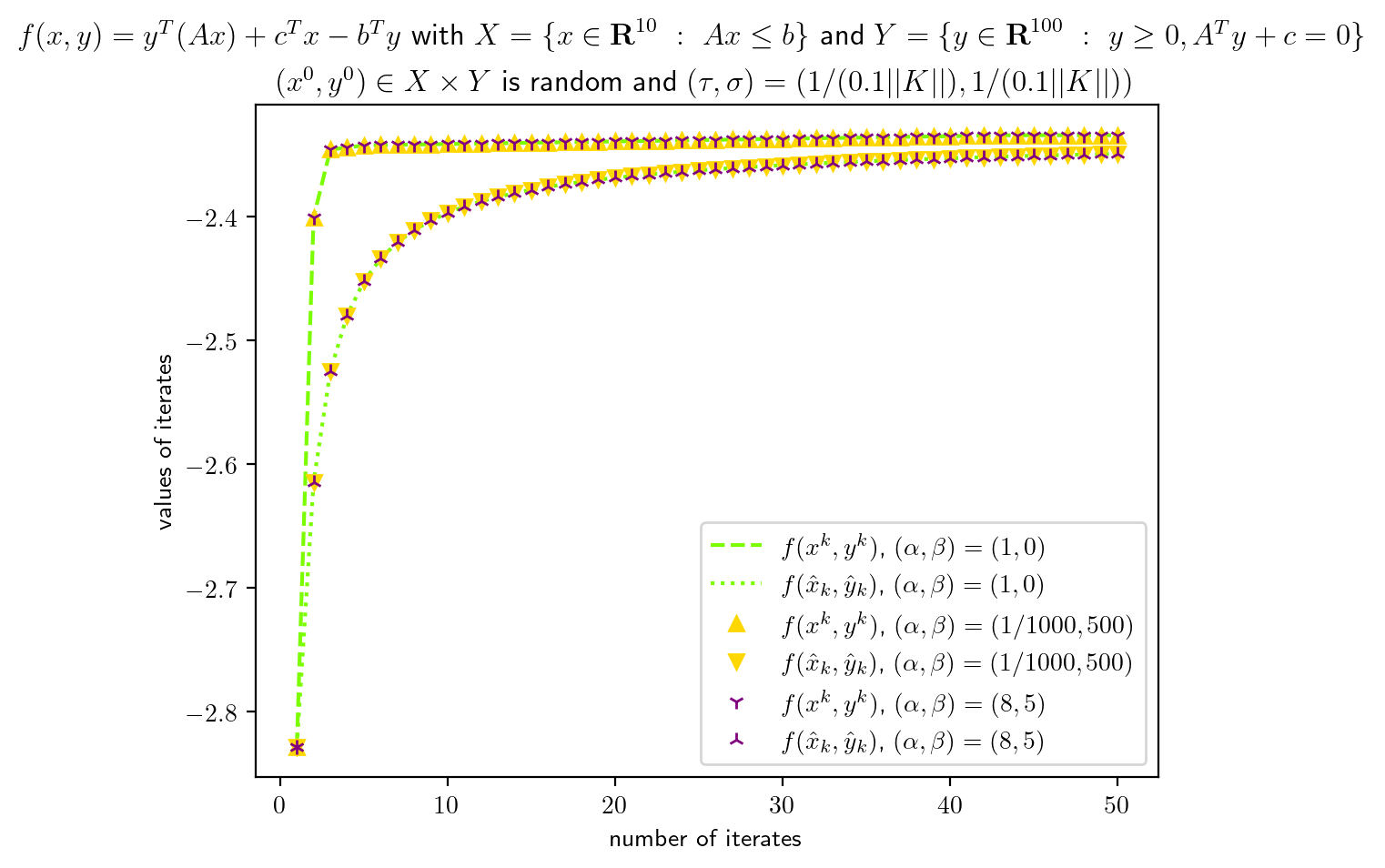} and 
	 \cref{fig: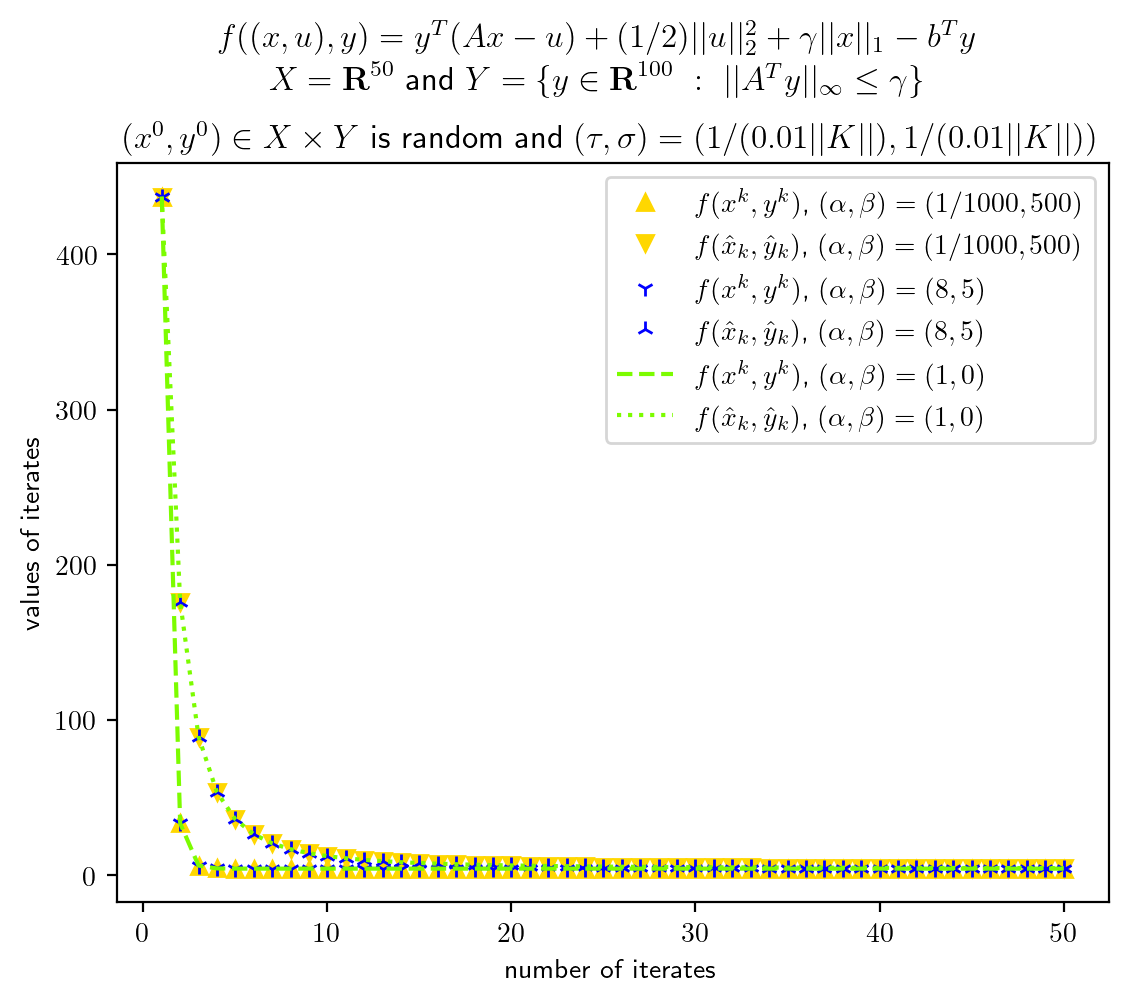}.)
	\end{itemize}
	
\end{itemize}

 \subsection{Outline}
This work is organized as follows. 
We present some preliminary results in \cref{section:preliminaries}.
Our algorithm for convex-concave saddle-point problems 
associated with a general convex-concave function 
and some related fundamental results are stated in
\cref{section:APMM}.
We show three convergence results from our algorithm  for
 the convex-concave saddle-point problem associated with a special 
 but popular and useful class of convex-concave functions in
\cref{section:ConvergenceResults}.
In \cref{section:NumericalExperiments}, 
we implement our iterate scheme and some other ones associated with constant
involved parameters  in examples 
of a matrix game, 
a linear program in inequality form, and a least-squares problem with $\ell_{1}$
regularization. 
We  briefly summarize this work in \cref{section:Conclusion}.

 \section{Preliminaries} \label{section:preliminaries}
We shall use the following easy result multiple times later. 
 \begin{fact} {\rm \cite[Fact~1.1]{Oy2023subgradient}} \label{fact:saddlepoint}
 Let $(\bar{x}, \bar{y}) \in X \times Y$. 	
 The following statements are equivalent. 
 \begin{enumerate}
 	\item  \label{fact:saddlepoint:sp} 
	$(\bar{x}, \bar{y})$ is a saddle-point of the function $f$.
 	\item  \label{fact:saddlepoint:leq} 
	$(\forall (x,y) \in X \times Y)$ $f(\bar{x},y)- f(x,\bar{y}) \leq 0$.
 	\item  \label{fact:saddlepoint:partial} 
	$0 \in \partial_{x} \left( f +\iota_{X} \right) (\bar{x}, \bar{y}) $ and $0 \in \partial_{y} 
	\left( -f +\iota_{Y} \right) (\bar{x}, \bar{y}) $.
 \end{enumerate}
 \end{fact}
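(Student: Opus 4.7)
The plan is to establish the equivalences by proving the chain $(\ref{fact:saddlepoint:sp}) \Longleftrightarrow (\ref{fact:saddlepoint:leq})$ and $(\ref{fact:saddlepoint:sp}) \Longleftrightarrow (\ref{fact:saddlepoint:partial})$, since each implication reduces to a direct unpacking of the relevant definitions (saddle-point, subdifferential, indicator function). Because the statement is quoted from \cite{Oy2023subgradient}, I will not aim for novelty but simply lay out the mechanical verification.

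For $(\ref{fact:saddlepoint:sp}) \Rightarrow (\ref{fact:saddlepoint:leq})$, I would invoke the definition \cref{eq:solution}: chaining $f(\bar{x},y)\leq f(\bar{x},\bar{y}) \leq f(x,\bar{y})$ immediately yields $f(\bar{x},y)-f(x,\bar{y})\leq 0$. For the reverse direction, I would specialize $(\ref{fact:saddlepoint:leq})$ to $x=\bar{x}$ (obtaining $f(\bar{x},y)\leq f(\bar{x},\bar{y})$ for all $y\in Y$) and to $y=\bar{y}$ (obtaining $f(\bar{x},\bar{y})\leq f(x,\bar{y})$ for all $x\in X$); concatenating these reproduces \cref{eq:solution}.

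For $(\ref{fact:saddlepoint:sp}) \Leftrightarrow (\ref{fact:saddlepoint:partial})$, I would rewrite the two subdifferential inclusions as optimality conditions. The inclusion $0\in\partial_{x}(f+\iota_{X})(\bar{x},\bar{y})$ is equivalent, by the subdifferential inequality applied to the proper convex function $f(\cdot,\bar{y})+\iota_{X}$, to
\[
 (\forall x\in\mathcal{H}_{1}) \quad f(\bar{x},\bar{y})+\iota_{X}(\bar{x}) \leq f(x,\bar{y})+\iota_{X}(x),
\]
i.e., $f(\bar{x},\bar{y})\leq f(x,\bar{y})$ for every $x\in X$. Analogously, $0\in\partial_{y}(-f+\iota_{Y})(\bar{x},\bar{y})$ is equivalent to $-f(\bar{x},\bar{y})\leq -f(\bar{x},y)$, i.e., $f(\bar{x},y)\leq f(\bar{x},\bar{y})$ for every $y\in Y$. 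These two inequalities are exactly the definition \cref{eq:solution} of a saddle-point, so $(\ref{fact:saddlepoint:partial})\Leftrightarrow (\ref{fact:saddlepoint:sp})$.

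There is no genuine obstacle in this argument; the only care required is a notational one, namely keeping straight that the partial subdifferentials $\partial_{x}$ and $\partial_{y}$ act on the slices $f(\cdot,\bar{y})+\iota_{X}$ and $-f(\bar{x},\cdot)+\iota_{Y}$, respectively, and using properness of these slices (guaranteed by the convex-concavity hypothesis on $f$) so that the subdifferential inequality applies. Once that bookkeeping is in place, the three statements are identified with each other in a single pass.
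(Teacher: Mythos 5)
Your argument is correct: both equivalences follow exactly as you describe, by chaining/splitting the two inequalities in \cref{eq:solution} for \cref{fact:saddlepoint:sp}$\Leftrightarrow$\cref{fact:saddlepoint:leq}, and by reading $0\in\partial g(\bar{x})$ as the global-minimality of $\bar{x}$ for the proper convex slices $f(\cdot,\bar{y})+\iota_{X}$ and $-f(\bar{x},\cdot)+\iota_{Y}$ for \cref{fact:saddlepoint:sp}$\Leftrightarrow$\cref{fact:saddlepoint:partial}. Note that the paper itself supplies no proof here --- the statement is imported verbatim from \cite{Oy2023subgradient} --- so there is nothing to compare against; your direct unpacking of the definitions is the standard argument one would write in that reference, and your remark about properness of the slices is exactly the bookkeeping needed to make Fermat's rule apply.
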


Below we naturally extend the definition of the partial primal-dual gap 
introduced by Chambolle and Pock in \cite[Page~121]{ChambollePock2011}
from a convex-concave function in the form of \cref{eq:fspecialform} 
to a general convex-concave function.  
Let $B_{1}$ and $B_{2}$ be two  nonempty subsets of $X$ and $Y$, respectively.  
The \emph{partial primal-dual gap} 
(\emph{associated with} $(f, B_{1}, B_{2})$) 
$G^{f}_{B_{1} \times B_{2}} :  X \times Y \to  \mathbf{R} \cup \{-\infty, +\infty\}$ 
is defined as 
\begin{align} \label{eq:pp-d}
(\forall (x,y) \in X \times Y) \quad 	
G^{f}_{B_{1} \times B_{2}} (x,y) = \sup\limits_{y' \in B_{2}} 
f(x,y') - \inf\limits_{x' \in B_{1}} f(x', y). 
\end{align}

\begin{example}  \label{example:toy:x2-y2} 
Consider the convex-concave function  $(\forall (x,y) \in \mathbf{R}^{2})$ 
$f(x,y) = x^{2}-y^{2}$. The following assertions hold. 
	\begin{enumerate}
		\item \label{example:x2-y2:G} 
		Set $B_{1}=B_{2} =[-1,1]$ and $B_{3}=B_{4}=[1,2]$. 
		Then $G^{f}_{B_{1} \times B_{2}} (0,0) =G^{f}_{B_{3} \times B_{4}} (1,1)  =0$.
		\item \label{example:x2-y2:01} $(0,0)$ is a saddle-point of $f$ and $(1,1)$ 
		is not a saddle-point of $f$.
	\end{enumerate} 
\end{example}

\begin{proof}
	\cref{example:x2-y2:G}: It is clear that 
\begin{align*}
G^{f}_{B_{1} \times B_{2}} (0,0) &= \sup_{y \in [-1,1]} (-y^{2}) - \inf_{x \in [-1,1]} 
x^{2} = 0-0=0;\\
G^{f}_{B_{3} \times B_{4}} (1,1) &=  \sup_{y \in [1,2]} (1-y^{2}) - \inf_{x \in [1,2]} 
(x^{2}-1) = 0-0=0.
\end{align*}

\cref{example:x2-y2:01}:	Note that 
\[ 
(\forall x \in X) (\forall y \in Y) \quad -y^{2} = f(0, y)  \leq  0=f(0,0)  
\leq f(x, 0) =x^{2},
\]
which guarantees that $(0,0)$ is a saddle-point of $f$. 
On the other hand, because
\[  
f(1,0) =1 > 0 = f(1,1) \text{ and }  f(1,1) =0 > -1 =f(0,1),
\]
$(1,1)$ is not a saddle-point of $f$.
\end{proof}	

The following result will be used in our convergence proof in 
\cref{section:ConvergenceResults} 
below.
\begin{lemma}\label{lemma:clustersaddletkxy}
Suppose that $(\forall (x,y) \in X \times Y)$  $f(\cdot, y)$ and $-f(x, \cdot)$ 
are proper, convex, and lower semicontinuous. 
Let $((x^{k}, y^{k}))_{k \in \mathbf{N}}$ be a bounded sequence  in $X\times Y$. 
Suppose that 
\begin{align} \label{eq:lemma:clustersaddletkxy}
(\forall (x,y) \in X \times Y) 
\quad 
\limsup_{k \to \infty} f(x^{k},y)  - f(x,y^{k}) \leq 0.
\end{align}	 
Then $((x^{k}, y^{k}))_{k \in \mathbf{N}}$  has at least one weakly 
sequential cluster point and every weakly sequential cluster point 
$(\bar{x}, \bar{y})$ of $((x^{k}, y^{k}))_{k \in \mathbf{N}}$  is a saddle-point of $f$.
\end{lemma}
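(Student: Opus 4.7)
The plan is to separate the proof into two parts: first establishing the existence of a weakly sequential cluster point, and then showing that any such cluster point satisfies the inequality characterization of a saddle-point from \cref{fact:saddlepoint}\cref{fact:saddlepoint:leq}.

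For existence, I would invoke that $\mathcal{H}_{1} \times \mathcal{H}_{2}$ is a reflexive Hilbert space and the sequence $((x^{k}, y^{k}))_{k \in \mathbf{N}}$ is bounded, so by the Banach--Alaoglu theorem (in the form valid for reflexive spaces) it admits a weakly convergent subsequence. Because $X$ and $Y$ are nonempty, closed, and convex, they are weakly closed, hence the weak limit $(\bar{x}, \bar{y})$ lies in $X \times Y$. This already produces at least one weakly sequential cluster point.

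Next, let $(\bar{x}, \bar{y})$ be any weakly sequential cluster point of $((x^{k}, y^{k}))_{k \in \mathbf{N}}$, realized by a subsequence $(x^{k_{n}}, y^{k_{n}}) \weakly (\bar{x}, \bar{y})$. Fix an arbitrary $(x, y) \in X \times Y$. Since $f(\cdot, y)$ is proper, convex, and lower semicontinuous, it is weakly lower semicontinuous, which yields $f(\bar{x}, y) \leq \liminf_{n \to \infty} f(x^{k_{n}}, y)$. Analogously, applying the same principle to the proper convex lsc function $-f(x, \cdot)$ and using $\liminf(-a_{n}) = -\limsup(a_{n})$, I obtain $\limsup_{n \to \infty} f(x, y^{k_{n}}) \leq f(x, \bar{y})$. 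Combining these two inequalities with the elementary identities $\liminf \alpha_{n} + \liminf \beta_{n} \leq \liminf (\alpha_{n} + \beta_{n}) \leq \limsup (\alpha_{n}+\beta_{n})$ and with the hypothesis \cref{eq:lemma:clustersaddletkxy} specialized to the subsequence indices, I arrive at
\begin{align*}
f(\bar{x}, y) - f(x, \bar{y})
&\leq \liminf_{n \to \infty} f(x^{k_{n}}, y) + \liminf_{n \to \infty}\bigl(-f(x, y^{k_{n}})\bigr) \\
&\leq \limsup_{n \to \infty} \bigl( f(x^{k_{n}}, y) - f(x, y^{k_{n}}) \bigr) \leq 0.
\end{align*}
Since $(x, y) \in X \times Y$ was arbitrary, \cref{fact:saddlepoint}\cref{fact:saddlepoint:leq}$\Leftrightarrow$\cref{fact:saddlepoint:sp} delivers that $(\bar{x}, \bar{y})$ is a saddle-point of $f$, finishing the proof.

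The only genuine subtlety I anticipate is the handling of possibly extended-real values: because $f(\cdot, y)$ may take the value $+\infty$ and $f(x, \cdot)$ may take $-\infty$, the algebra with $\liminf$ and $\limsup$ must be justified. This is harmless here because the hypothesis \cref{eq:lemma:clustersaddletkxy} forces $f(x^{k}, y) - f(x, y^{k})$ to be bounded above for large $k$, so no indeterminate $\infty - \infty$ occurs on the decisive side of the inequality; the weak lower semicontinuity bounds on $f(\bar{x}, y)$ and $f(x, \bar{y})$ remain valid in the extended sense and combine with the finite upper bound to produce the desired scalar inequality.
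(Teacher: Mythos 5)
Your proposal is correct and follows essentially the same route as the paper: extract a weakly convergent subsequence from the bounded sequence, use weak (sequential) lower semicontinuity of the proper convex lsc functions $f(\cdot,y)$ and $-f(x,\cdot)$ to pass to the limit, chain the $\liminf$/$\limsup$ inequalities against hypothesis \cref{eq:lemma:clustersaddletkxy}, and conclude via \cref{fact:saddlepoint}. Your extra remarks on weak closedness of $X\times Y$ and on avoiding $\infty-\infty$ in the extended-real arithmetic are sound refinements of details the paper leaves implicit.
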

\begin{proof}
Because $((x^{k}, y^{k}))_{k \in \mathbf{N}}$  is bounded in a Hilbert space, 
via \cite[Lemma~2.45]{BC2017}, 
it has at least one weakly sequential cluster point  $(\bar{x}, \bar{y})$, 
that is, there exits a subsequence  $((x^{k_{i}}, y^{k_{i}}))_{i \in \mathbf{N}}$   
of $((x^{k}, y^{k}))_{k \in \mathbf{N}}$  such that 
\begin{align} \label{eq:lemma:partialprimaldual:clusterpoint:weakly}
x^{k_{i}} \weakly \bar{x} \quad \text{and} \quad y^{k_{i}} \weakly   \bar{y}.
\end{align} 
Because $(\forall (x,y) \in X \times Y)$  $f(\cdot, y)$ and $-f(x, \cdot)$ 
are proper, convex, and lower semicontinuous, due to \cite[Theorem~9.1]{BC2017}, 
they are weakly sequentially lower semicontinuous. 
This connected with \cref{eq:lemma:clustersaddletkxy} entails that 
for every $(x,y) \in X \times Y$,
\begin{align*}
f(\bar{x},y) - f(x, \bar{y}) 
&  \stackrel{\cref{eq:lemma:partialprimaldual:clusterpoint:weakly}}{\leq}
\liminf_{i \to \infty} f(x^{k_{i}},y) + \liminf_{i \to \infty}- f(x,y^{k_{i}})\\
 &\,  \leq \, \liminf_{i \to \infty} \left( f(x^{k_{i}},y)  - f(x,y^{k_{i}}) \right)\\
&\,  \leq \, \limsup_{k \to \infty} f(x^{k},y)  - f(x,y^{k})  
    \stackrel{\cref{eq:lemma:clustersaddletkxy}}{\leq}    0,
\end{align*}
which, connecting with  \cref{fact:saddlepoint}, 
necessitates that $(\bar{x}, \bar{y})$ is a saddle-point of  $f$.
\end{proof}

The following \cref{lemma:partialprimaldual} illustrates that 
results in \cref{example:toy:x2-y2} are not coincidences. 
Note that related results of 
\cref{lemma:partialprimaldual}\cref{lemma:partialprimaldual:>=0}$\&$\cref{lemma:partialprimaldual:saddlepoint}
were claimed on \cite[Page~122]{ChambollePock2011}
for a partial primal-dual gap associated with 
a convex-concave function in the form of \cref{eq:fspecialform}.
\begin{lemma} \label{lemma:partialprimaldual}
Let $B_{1}$ and $B_{2}$ be two  nonempty subsets of $X$ and $Y$, respectively.  
Consider the  partial primal-dual gap 
$G^{f}_{B_{1} \times B_{2}} :  X \times Y \to \mathbf{R} \cup \{-\infty, +\infty\}$
defined in \cref{eq:pp-d}.
The following statements hold. 
	\begin{enumerate}
		\item \label{lemma:partialprimaldual:>=0} 
		Suppose that a saddle-point $(x^{*},y^{*}) $ of $f$ is included in  
		$ B_{1} \times B_{2}$. 
		Then $(\forall (x,y) \in X \times Y)$ $G^{f}_{B_{1} \times B_{2}} (x,y) \geq 0$. 
		Moreover,  $	G^{f}_{B_{1} \times B_{2}} (x^{*},y^{*})  =0$.
		\item \label{lemma:partialprimaldual:saddlepoint} 
		Let $(\bar{x}, \bar{y}) \in \inte B_{1} \times \inte B_{2}$. 
		Suppose that  $	G^{f}_{B_{1} \times B_{2}}  (\bar{x}, \bar{y}) =0$. 
		Then $(\bar{x}, \bar{y}) $ is a saddle-point of $f$.
		\item \label{lemma:partialprimaldual:clusterpoint} 
		Suppose that $(\forall (x,y) \in X \times Y)$  $f(\cdot, y)$ 
		and $-f(x, \cdot)$ are proper, convex, and lower semicontinuous. 
		Let $((x^{k}, y^{k}))_{k \in \mathbf{N}}$ be a bounded sequence  in $X \times Y$.
 Suppose that, for every $k$ large enough, $G^{f}_{X \times Y} (x^{k},y^{k}) \leq t_{k}$, 
 where $(t_{k})_{k \in \mathbf{N}}$  is a sequence in $\mathbf{R}$ 
 with $\limsup_{k \to \infty} t_{k} =0$ 
 $($e.g., $(\forall k \in \mathbf{N})$ $t_{k} \equiv 0$$)$. 
Then $((x^{k}, y^{k}))_{k \in \mathbf{N}}$  has at least one weakly 
sequential cluster point and every weakly sequential cluster point 
$(\bar{x}, \bar{y})$ of $((x^{k}, y^{k}))_{k \in \mathbf{N}}$  is a saddle-point of $f$.
\end{enumerate}
\end{lemma}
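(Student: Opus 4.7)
The plan is to prove the three parts separately, with part (i) being a direct unpacking of the saddle-point inequalities, part (ii) requiring a convexity/concavity argument that promotes local optimality over $B_1 \times B_2$ to global optimality over $X \times Y$, and part (iii) following almost immediately from \cref{lemma:clustersaddletkxy}.

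For \cref{lemma:partialprimaldual:>=0}, I would start from the saddle-point inequality \cref{eq:solution}. Since $x^{*} \in B_{1}$ and $y^{*} \in B_{2}$, for every $(x,y) \in X \times Y$ we have $\sup_{y' \in B_{2}} f(x,y') \geq f(x, y^{*}) \geq f(x^{*}, y^{*})$ and $\inf_{x' \in B_{1}} f(x',y) \leq f(x^{*}, y) \leq f(x^{*}, y^{*})$; subtracting gives $G^{f}_{B_{1} \times B_{2}}(x,y) \geq 0$. To evaluate $G^{f}_{B_{1} \times B_{2}}(x^{*},y^{*})$, I would use the saddle-point property again to show both $\sup_{y' \in B_{2}} f(x^{*},y') = f(x^{*},y^{*})$ and $\inf_{x' \in B_{1}} f(x',y^{*}) = f(x^{*},y^{*})$ (the upper bound from \cref{eq:solution} combined with the lower bound obtained by testing at $y^{*}$ resp.\ $x^{*}$), whence the difference vanishes.

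For \cref{lemma:partialprimaldual:saddlepoint}, I would exploit the sandwich $\sup_{y' \in B_{2}} f(\bar{x},y') \geq f(\bar{x}, \bar{y}) \geq \inf_{x' \in B_{1}} f(x',\bar{y})$ together with the hypothesis $G^{f}_{B_{1} \times B_{2}}(\bar{x},\bar{y}) = 0$ to deduce that the sup and inf both equal $f(\bar{x},\bar{y})$. Thus $\bar{y}$ maximizes the concave function $f(\bar{x},\cdot)$ over $B_{2}$ and $\bar{x}$ minimizes the convex function $f(\cdot,\bar{y})$ over $B_{1}$. The key step, which I anticipate as the main technical point, is propagating these relative optima to global ones on $Y$ and $X$: given any $y \in Y$, the assumption $\bar{y} \in \inte B_{2}$ yields some $\lambda \in \left]0,1\right]$ with $\bar{y} + \lambda(y-\bar{y}) \in B_{2}$; applying the concavity of $f(\bar{x},\cdot)$ at this point and using $f(\bar{x}, \bar{y}+\lambda(y-\bar{y})) \leq f(\bar{x},\bar{y})$ I would conclude $f(\bar{x},y) \leq f(\bar{x},\bar{y})$. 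The symmetric argument with the convexity of $f(\cdot,\bar{y})$ and $\bar{x} \in \inte B_{1}$ gives $f(\bar{x},\bar{y}) \leq f(x,\bar{y})$ for every $x \in X$, and \cref{fact:saddlepoint} closes the case.

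For \cref{lemma:partialprimaldual:clusterpoint}, I would simply observe that for any fixed $(x,y) \in X \times Y$ and every $k$ large enough,
\[
f(x^{k},y) - f(x,y^{k}) \leq \sup_{y' \in Y} f(x^{k},y') - \inf_{x' \in X} f(x',y^{k}) = G^{f}_{X \times Y}(x^{k},y^{k}) \leq t_{k}.
\]
Taking $\limsup_{k \to \infty}$ and using $\limsup t_{k} = 0$ yields the hypothesis \cref{eq:lemma:clustersaddletkxy} of \cref{lemma:clustersaddletkxy}, which directly provides existence of a weakly sequential cluster point and the saddle-point property of every such cluster point. No additional machinery beyond the preceding lemma is needed here.
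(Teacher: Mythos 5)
Your proposal is correct and follows essentially the same route as the paper: part (i) by testing the sup and inf at $(x^{*},y^{*})$ and invoking the saddle-point inequalities, part (ii) by the interiority-plus-convexity/concavity argument that promotes optimality over $B_{1}\times B_{2}$ to optimality over $X\times Y$, and part (iii) by reducing to \cref{lemma:clustersaddletkxy} exactly as the paper does. The only cosmetic difference is in part (ii), where you first extract the identity $\sup_{y'\in B_{2}} f(\bar{x},y')=\inf_{x'\in B_{1}} f(x',\bar{y})=f(\bar{x},\bar{y})$ before the convexity step, whereas the paper carries everything in a single chain of inequalities; the substance is identical.
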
	
	
\begin{proof}
\cref{lemma:partialprimaldual:>=0}: 
Let $(x,y) \in X \times Y$.  Because  $(x^{*},y^{*}) \in  B_{1} \times B_{2} $ 
is a   saddle-point of $f$, via \cref{eq:pp-d}, 
\[ 
G^{f}_{B_{1} \times B_{2}} (x,y) 
= \sup\limits_{y' \in B_{2}} f(x,y') - \inf\limits_{x' \in B_{1}} f(x', y) \geq 
f(x,y^{*}) - f(x^{*}, y) \geq 0,
\]
where in the last inequality we use  \cref{fact:saddlepoint}  
and the assumption that $(x^{*},y^{*}) $ is a saddle-point of $f$. 

In addition, applying  the assumption 
$(x^{*},y^{*}) \in B_{1} \times B_{2} \subseteq X \times Y$, 
we observe that 
\begin{align*}
f(x^{*},y^{*})  \leq&	\sup\limits_{y' \in B_{2}} f(x^{*},y') \leq \sup\limits_{y' \in Y} 
f(x^{*},y') \leq f(x^{*},y^{*}), \text{ and}\\
 f(x^{*},y^{*})  \geq&  \inf\limits_{x' \in B_{1}} f(x', y^{*}) \geq \inf\limits_{x' \in X} f(x', 
 y^{*}) \geq   f(x^{*},y^{*}),
\end{align*}
which implies that 
$	G^{f}_{B_{1} \times B_{2}} (x^{*},y^{*}) 
=  \sup\limits_{y' \in B_{2}} f(x^{*},y') - \inf\limits_{x' \in B_{1}} f(x', y^{*}) 
=f(x^{*},y^{*}) -f(x^{*},y^{*})     =0$.
	
\cref{lemma:partialprimaldual:saddlepoint}: 
Let $(x,y) \in X \times Y$. Because $(\bar{x}, \bar{y}) \in \inte B_{1} \times \inte B_{2}$, 
there exists $\varepsilon \in ]0,1[$ such that 
\begin{align} \label{eq:lemma:partialprimaldual:saddlepoint:inB}
\bar{x} +\varepsilon (x- \bar{x}) \in B_{1} \quad \text{and} 
\quad  \bar{y} +\varepsilon (y- \bar{y}) \in B_{2}.
\end{align} 
By assumptions,
\begin{align} \label{eq:lemma:partialprimaldual:saddlepoint:supinf}
0 = G^{f}_{B_{1} \times B_{2}}  (\bar{x}, \bar{y}) 
= \sup\limits_{y' \in B_{2}} f(\bar{x} ,y') - \inf\limits_{x' \in B_{1}} f(x', \bar{y}). 
\end{align}
Applying the concavity of $f(\bar{x}, \cdot)$ and 
the convexity of $f(\cdot, \bar{y})$ and bearing 
\cref{eq:lemma:partialprimaldual:saddlepoint:inB} and 
\cref{eq:lemma:partialprimaldual:saddlepoint:supinf} in mind, we derive that 
\begin{subequations}
\begin{align}
& \varepsilon  f(\bar{x} ,y) + (1- \varepsilon) f (\bar{x}, \bar{y})  
\leq f(\bar{x} , \bar{y} +\varepsilon (y- \bar{y}))   
\stackrel{\cref{eq:lemma:partialprimaldual:saddlepoint:inB}}{\leq}	
\sup\limits_{y' \in B_{2}} f(\bar{x} ,y')
 \stackrel{\cref{eq:lemma:partialprimaldual:saddlepoint:supinf}}{=}
  \inf\limits_{x' \in B_{1}} f(x', \bar{y}) \leq f (\bar{x}, \bar{y}); 
  \label{eq:lemma:partialprimaldual:saddlepoint:y}\\
&  \varepsilon f(x,\bar{y}) 
+ (1- \varepsilon)  f (\bar{x}, \bar{y}) 
 \geq f(\bar{x} +\varepsilon (x- \bar{x}), \bar{y}) 
 \stackrel{\cref{eq:lemma:partialprimaldual:saddlepoint:inB}}{\geq}  
 \inf\limits_{x' \in B_{1}} f(x', \bar{y}) 
 \stackrel{\cref{eq:lemma:partialprimaldual:saddlepoint:supinf}}{=} 
 \sup\limits_{y' \in B_{2}} f(\bar{x} ,y') \geq f(\bar{x}, \bar{y}).
 \label{eq:lemma:partialprimaldual:saddlepoint:x}
	\end{align}
\end{subequations}
It is clear that 
\begin{align*}
& \cref{eq:lemma:partialprimaldual:saddlepoint:y} 
\Rightarrow f(\bar{x} ,y) \leq f (\bar{x}, \bar{y});\\
& \cref{eq:lemma:partialprimaldual:saddlepoint:x} 
\Rightarrow f(x,\bar{y})  \geq  f(\bar{x}, \bar{y}).
\end{align*}
Combine the last two inequalities with the definition \cref{eq:solution} 
of the saddle-point to conclude that $(\bar{x}, \bar{y}) $ is a saddle-point of $f$.
	
\cref{lemma:partialprimaldual:clusterpoint}: 
Because for every $k$ large enough   $G^{f}_{X \times Y} (x^{k},y^{k}) \leq t_{k}$, 
we have that  there exists $N \in \mathbf{N}$ such that  
\begin{align*}
(\forall (x,y) \in X\times Y)(\forall k \geq N) \quad	
t_{k}  & \geq G^{f}_{X \times Y} 
 (x^{k},y^{k})  = \sup\limits_{y' \in Y} f(x^{k},y') 
 - \inf\limits_{x' \in X} f(x', y^{k})\\
 &  \geq  f(x^{k},y) - f(x,y^{k}),
\end{align*} 
which ensures that $ \limsup_{k \to \infty} f(x^{k},y)  - f(x,y^{k})  \leq 
\limsup_{k \to \infty}  t_{k} \leq 0$.
 Then applying \cref{lemma:clustersaddletkxy},  we obtain the required result. 
\end{proof}

\section{Alternating proximity mapping method} \label{section:APMM}

In this section, we introduce an iterate scheme for 
solving convex-concave saddle-point problems associated with
a general convex-concave function.
We also provide some fundamental results related to our iterates,
which will be used in our convergence proofs in 
\cref{section:ConvergenceResults} below.

In the rest of this work, 
we assume that $(\forall (x,y) \in X \times Y)$  $f(\cdot, y)$ and $-f(x, \cdot)$ 
are proper, convex, and lower semicontinuous.

\subsection{Algorithm}
Henceforth, we assume that  
\[ 
(\tau_{k})_{k \in \mathbf{N}} \text{ and } (\sigma_{k})_{k \in \mathbf{N}} 
\text{	are in } \mathbf{R}_{++}, \text{and } 
(\alpha_{k})_{k \in \mathbf{N} \cup \{-1\}} \text{ and } (\beta_{k})_{k \in \mathbf{N} \cup 
\{-1\}}
\text{ are in } \mathbf{R}_{+}.
\]

Let $(x^{0}, y^{0}) \in X \times Y$. 
Set $\bar{x}^{0}=x^{0}$ and $\bar{y}^{0} =y^{0}$.
For every $k \in \mathbf{N}$, 
the alternating proximal mapping method  considered in 
this section is updated as follows 
 \begin{subequations} \label{eq:PPalgorithm:1}
 	\begin{align}
 		&y^{k+1} 
 		= \Prox^{Y}_{-\sigma_{k} f(\bar{x}^{k}, \cdot )}  y^{k} 
 		:= \argmax_{y \in Y}  \left\{  f(\bar{x}^{k}, y ) 
		- \frac{1}{2 \sigma_{k}} \norm{y- y^{k} }^{2} \right\};  
		\label{eq:PPalgorithm:1:y}\\
 		& \bar{y}^{k+1} = y^{k+1} + \beta_{k} (y^{k+1} -y^{k}); 
		\label{eq:PPalgorithm:1:bary}\\
 		&x^{k+1} 
 		= \Prox^{X}_{\tau_{k} f(\cdot, \bar{y}^{k+1}) } x^{k}
 		=\argmin_{x \in X} \left\{ f(x, \bar{y}^{k+1}) +\frac{1}{2 \tau_{k}} 
		\norm{x - x^{k}}^{2} \right\};  \label{eq:PPalgorithm:1:x}\\
 		& \bar{x}^{k+1} = x^{k+1} + \alpha_{k} (x^{k+1} -x^{k}).  
		\label{eq:PPalgorithm:1:barx}
 	\end{align}
 \end{subequations} 
Note that to coincide with our initialization $\bar{x}^{0}=x^{0}$ 
and $\bar{y}^{0} =y^{0}$, we introduce $x^{-1}=x^{0}$ and $y^{-1}=y^{0}$.

 \begin{fact}  \label{fact:proxsubdiff}
	Let $g: \mathcal{H} \to \mathbf{R}\cup \{- \infty\}$ be a proper,   convex, and lower 
	semicontinuous function, 
	and let $X$ be a nonempty, closed, and convex subset of $\mathcal{H}$. Let 
	$\bar{x}$ and 
	$p$ be in $\mathcal{H}$. Then  
	\begin{align*}
		p = \argmin_{x \in X}  \{ g(x) + \frac{1}{2} \norm{x -\bar{x}}^{2} \}
		\Leftrightarrow   (\forall x \in X) \quad \innp{\bar{x}-p, x-p} +g(p) \leq g(x).
	\end{align*}
\end{fact}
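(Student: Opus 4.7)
The plan is to prove both directions of this variational characterization of the constrained proximity mapping. The forward direction $(\Rightarrow)$ I would handle by a standard convex perturbation argument, and the reverse direction $(\Leftarrow)$ by expanding the squared norm and applying the given inequality directly.

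For $(\Rightarrow)$, assume $p$ is the minimizer. Since $p \in X$, in particular $g(p) < +\infty$, so the inequality to be proved is not trivially satisfied only on the relevant set. Fix an arbitrary $x \in X$ and, for $t \in (0,1]$, set $p_{t} = p + t(x - p) = (1-t)p + t x$. By convexity of $X$ we have $p_{t} \in X$, so
\begin{align*}
g(p) + \tfrac{1}{2}\norm{p - \bar{x}}^{2} \leq g(p_{t}) + \tfrac{1}{2}\norm{p_{t} - \bar{x}}^{2}.
\end{align*}
I would then use convexity of $g$ to bound $g(p_{t}) \leq (1-t)g(p) + t g(x)$ and expand $\norm{p_{t} - \bar{x}}^{2} = \norm{p - \bar{x}}^{2} + 2t\innp{p - \bar{x}, x - p} + t^{2}\norm{x-p}^{2}$. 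After cancellation, this gives $0 \leq t(g(x) - g(p)) + t\innp{p - \bar{x}, x - p} + \tfrac{t^{2}}{2}\norm{x-p}^{2}$; dividing by $t > 0$ and letting $t \to 0^{+}$ yields the desired $\innp{\bar{x} - p, x - p} + g(p) \leq g(x)$.

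For $(\Leftarrow)$, suppose the variational inequality holds. I would fix $x \in X$ and rewrite $\tfrac{1}{2}\norm{x - \bar{x}}^{2}$ by expanding around $p$:
\begin{align*}
\tfrac{1}{2}\norm{x - \bar{x}}^{2} = \tfrac{1}{2}\norm{p - \bar{x}}^{2} - \innp{\bar{x} - p, x - p} + \tfrac{1}{2}\norm{x - p}^{2}.
\end{align*}
Adding $g(x)$ to both sides and applying the hypothesis $g(x) \geq g(p) + \innp{\bar{x} - p, x - p}$ causes the inner-product term to cancel, leaving
\begin{align*}
g(x) + \tfrac{1}{2}\norm{x - \bar{x}}^{2} \geq g(p) + \tfrac{1}{2}\norm{p - \bar{x}}^{2} + \tfrac{1}{2}\norm{x - p}^{2},
\end{align*}
which shows $p$ is a minimizer, and in fact the unique one by strict positivity of $\tfrac{1}{2}\norm{x-p}^{2}$ when $x \neq p$. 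The implicit requirement $p \in X$ is used when taking $x = p$ in the hypothesis to verify $g(p) \leq g(p)$ trivially, but more importantly the argmin is automatically restricted to $X$ by the way the minimization is posed.

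There is no real obstacle here, just routine bookkeeping. The only mild subtlety is that $g$ may take the value $+\infty$, but the minimizer $p$ must lie in $\dom g$ for the objective to be finite, and the forward perturbation is only informative for $x \in X \cap \dom g$ (otherwise the target inequality holds vacuously with $g(x) = +\infty$). An alternative route would be to apply Fermat's rule $0 \in \partial(g + \iota_{X})(p) + (p - \bar{x})$ together with the sum rule and the definition of the normal cone $N_{X}(p)$, but the elementary perturbation argument above avoids invoking a subdifferential qualification condition and makes the proof self-contained.
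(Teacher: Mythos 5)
Your proof is correct, but it takes a different route from the paper, which gives no argument of its own and simply defers to \cite[Proposition~12.26]{BC2017}; the intended proof there characterizes $p=\Prox_{g+\iota_{X}}\bar{x}$ via $\bar{x}-p\in\partial(g+\iota_{X})(p)$ and unpacks the definition of the subdifferential, so the variational inequality falls out in one line once the unconstrained result is applied to $g+\iota_{X}$. Your version replaces that citation with a self-contained two-sided argument: the forward direction by the standard convex perturbation $p_{t}=(1-t)p+tx$, division by $t$, and $t\to 0^{+}$, and the reverse direction by expanding $\tfrac{1}{2}\norm{x-\bar{x}}^{2}$ around $p$ so that the hypothesis cancels the cross term and even yields the quantitative bound $g(x)+\tfrac{1}{2}\norm{x-\bar{x}}^{2}\geq g(p)+\tfrac{1}{2}\norm{p-\bar{x}}^{2}+\tfrac{1}{2}\norm{x-p}^{2}$, which gives uniqueness for free. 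Both computations check out, and your handling of the $+\infty$ values is adequate (the target inequality is vacuous when $g(x)=+\infty$, and $g(p)$ is finite because $p$ minimizes a proper objective). The one place where your write-up is muddled is the parenthetical claim that taking $x=p$ in the hypothesis ``verifies'' $p\in X$: it does not, since it only yields $g(p)\leq g(p)$. For the reverse implication to produce $p=\argmin_{x\in X}$ one must assume $p\in X$ (equivalently, read the inequality as the subgradient inequality for $g+\iota_{X}$ over all of $\mathcal{H}$, which forces $\iota_{X}(p)=0$); this is a looseness inherited from the statement of the Fact itself rather than a defect of your argument, but it would be cleaner to state the assumption explicitly than to suggest it follows from the hypothesis.
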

\begin{proof}
	The proof is almost the same as \cite[Proposition~12.26]{BC2017}.
\end{proof}

In view of \cref{fact:proxsubdiff} and \cref{eq:PPalgorithm:1:barx} and 
\cref{eq:PPalgorithm:1:bary},
 we observe that 
 \begin{subequations} \label{eq:PPalgorithm:2}
 	\begin{align}
 		&(\forall x \in X) \quad \innp{ \frac{x^{k}-x^{k+1}}{\tau_{k}}, x-x^{k+1}} 
 			+f(x^{k+1}, \bar{y}^{k+1}) \leq f(x, \bar{y}^{k+1}); 	\label{eq:PPalgorithm:2:x}\\
 		&(\forall y \in Y) \quad \innp{\frac{y^{k}-y^{k+1}}{\sigma_{k}}, y-y^{k+1}}	
 			- f (\bar{x}^{k}, y^{k+1}) \leq - f (\bar{x}^{k}, y). \label{eq:PPalgorithm:2:y}
 	\end{align}
 \end{subequations}

%

\begin{fact} \label{fact:abc}
Let $\mathcal{H}$ be a Hilbert space and let $a, b,$ and $c$
be points in $\mathcal{H}$. Then 
\[ 
\innp{a-b, c-b} = \frac{1}{2} \left(\norm{a-b}^{2} + \norm{c-b}^{2} -\norm{c-a}^{2}\right).
\]
\end{fact}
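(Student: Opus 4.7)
The plan is to prove this identity by a straightforward expansion of a suitably chosen squared norm. The key observation is that $c-a = (c-b) - (a-b)$, so the quantity $\norm{c-a}^{2}$ naturally produces the inner product $\innp{a-b, c-b}$ as a cross term when expanded.

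Concretely, I would start by writing
\[
\norm{c-a}^{2} = \norm{(c-b) - (a-b)}^{2}
\]
and then apply the identity $\norm{u-v}^{2} = \norm{u}^{2} - 2\innp{u,v} + \norm{v}^{2}$ with $u = c-b$ and $v = a-b$, which is just bilinearity of the inner product together with $\norm{\cdot}^{2} = \innp{\cdot,\cdot}$. This yields
\[
\norm{c-a}^{2} = \norm{c-b}^{2} - 2\innp{c-b, a-b} + \norm{a-b}^{2}.
\]
Rearranging for the cross term and using symmetry of the inner product gives the claimed formula after dividing by $2$.

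There is essentially no obstacle here; the identity is a one-line consequence of the definition of the induced norm on a Hilbert space. The only thing to be slightly careful about is bookkeeping of signs when rearranging, but the calculation is entirely routine and requires no assumptions beyond $\mathcal{H}$ being a Hilbert space (or indeed any real inner product space).
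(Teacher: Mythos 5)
Your proof is correct and is essentially the same as the paper's: both are routine expansions of the squared norms via bilinearity of the inner product, the paper expanding everything into terms in $a$, $b$, $c$ while you group $c-a=(c-b)-(a-b)$ first. The rearrangement you describe yields exactly the claimed identity, so there is nothing to add.
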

\begin{proof}
It is clear that
\begin{align*}
&\norm{a-b}^{2} + \norm{c-b}^{2} -\norm{c-a}^{2} \\
=& \norm{a}^{2} -2 \innp{a,b} +\norm{b}^{2} +\norm{c}^{2} 
-2 \innp{c,b} +\norm{b}^{2} -\norm{c}^{2} +2 \innp{c,a} -\norm{a}^{2} \\
=&  -2 \innp{a,b}+ 2 \innp{b,b} - 2 \innp{c,b} +2 \innp{c,a} \\
=& 2	\innp{a-b, c-b}.
\end{align*}
\end{proof}

\begin{lemma} \label{lemma:xkykinequalities}
Let $k \in \mathbf{N}$.	 
The following assertions hold. 
	 \begin{enumerate}
	 	\item \label{lemma:xkykinequalities:innpy}
		 $(\forall y \in Y)$ $\innp{\frac{y^{k}-y^{k+1}}{\sigma_{k}}, y - y^{k+1}} 
		 - f (\bar{x}^{k}, y^{k+1}) \leq - f(\bar{x}^{k},y)$.
	 	\item \label{lemma:xkykinequalities:innpx} 
		$(\forall x \in X)$ $\innp{\frac{x^{k} -x^{k+1}}{\tau_{k}}, x -x^{k+1}} 
		+ f(x^{k+1}, \bar{y}^{k+1}) \leq f(x, \bar{y}^{k+1})$.
	 	\item \label{lemma:xkykinequalities:normy} $(\forall y \in Y)$ 
		$\frac{1}{2 \sigma_{k}}\left( \norm{y^{k} -y^{k+1}}^{2} 
		+\norm{y-y^{k+1}}^{2} -\norm{y - y^{k}}^{2} \right) + f(\bar{x}^{k},y) 
		- f (\bar{x}^{k}, y^{k+1}) \leq 0$.
	 \item  \label{lemma:xkykinequalities:normx} $(\forall x \in X)$ 
	 $\frac{1}{2 \tau_{k}} \left( \norm{x^{k} -x^{k+1}}^{2} 
	 + \norm{x - x^{k+1}}^{2} -\norm{x-x^{k}}^{2} \right) + f(x^{k+1}, \bar{y}^{k+1}) 
	 - f(x, \bar{y}^{k+1}) \leq 0$.
	 \end{enumerate}
\end{lemma}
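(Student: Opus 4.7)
The plan is to derive parts (i) and (ii) as immediate applications of the proximity-mapping characterization \cref{fact:proxsubdiff} to the two subproblems in the update rule \cref{eq:PPalgorithm:1}, and then obtain parts (iii) and (iv) by rewriting the inner products that appear in (i) and (ii) via the polarization identity \cref{fact:abc}.

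For part \cref{lemma:xkykinequalities:innpy}, I will apply \cref{fact:proxsubdiff} to the prox update \cref{eq:PPalgorithm:1:y}, taking $g = -\sigma_k f(\bar{x}^k,\cdot)$, $\bar{x} = y^k$ and $p = y^{k+1}$; after dividing by $\sigma_k > 0$, this yields the claim (and is essentially a restatement of \cref{eq:PPalgorithm:2:y}). Part \cref{lemma:xkykinequalities:innpx} is proved in the exact same way by applying \cref{fact:proxsubdiff} to \cref{eq:PPalgorithm:1:x} with $g = \tau_k f(\cdot,\bar{y}^{k+1})$, $\bar{x} = x^k$, $p = x^{k+1}$, recovering \cref{eq:PPalgorithm:2:x}. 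Since the update is written precisely as an argmin/argmax of a proximal-type functional, there is nothing more to verify here.

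For part \cref{lemma:xkykinequalities:normy}, I will start from the inequality just obtained in \cref{lemma:xkykinequalities:innpy}, multiply through by $\sigma_k$ not needed, and instead invoke \cref{fact:abc} with $a = y^k$, $b = y^{k+1}$, $c = y$ to replace
\[
\innp{y^k - y^{k+1},\, y - y^{k+1}} = \tfrac{1}{2}\bigl(\norm{y^k - y^{k+1}}^2 + \norm{y - y^{k+1}}^2 - \norm{y - y^k}^2\bigr),
\]
then divide by $\sigma_k$ and rearrange. Part \cref{lemma:xkykinequalities:normx} is handled identically, starting from \cref{lemma:xkykinequalities:innpx} and applying \cref{fact:abc} with $a = x^k$, $b = x^{k+1}$, $c = x$.

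I do not expect any real obstacle: the result is essentially a bookkeeping consequence of the proximal optimality conditions combined with the three-point identity. The only care required is to track signs and division by $\sigma_k$ or $\tau_k$ correctly when translating between the inner-product form and the squared-norm form.
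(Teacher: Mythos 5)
Your proposal is correct and follows essentially the same route as the paper: parts (i) and (ii) are exactly the inequalities \cref{eq:PPalgorithm:2}, which the paper obtains from \cref{fact:proxsubdiff} applied to the updates \cref{eq:PPalgorithm:1} just as you describe, and parts (iii) and (iv) then follow by substituting the three-point identity \cref{fact:abc} with the same choices $a=y^k$, $b=y^{k+1}$, $c=y$ (resp.\ $a=x^k$, $b=x^{k+1}$, $c=x$). The only blemish is the garbled phrase about multiplying by $\sigma_k$ being ``not needed''; the clean statement is that one applies \cref{fact:proxsubdiff} to $g=-\sigma_k f(\bar{x}^k,\cdot)$ and then divides the resulting inequality by $\sigma_k>0$, exactly as you indicate elsewhere.
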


\begin{proof}
\cref{lemma:xkykinequalities:innpy}$\&$\cref{lemma:xkykinequalities:innpx}: 
These inequalities follow immediately from the definition of subdifferential 
and \cref{eq:PPalgorithm:2}.
	
\cref{lemma:xkykinequalities:normy}$\&$\cref{lemma:xkykinequalities:normx}:  \cref{lemma:xkykinequalities:normy} (resp.\,\cref{lemma:xkykinequalities:normx}) 
is clear from \cref{lemma:xkykinequalities:innpy}  
(resp.\,\cref{lemma:xkykinequalities:innpx}) and \cref{fact:abc}.
\end{proof}

\subsection{Auxiliary results}
In the rest of this section, $((x^{k},y^{k}))_{k \in \mathbf{N}}$ is generated by the
iterate scheme \cref{eq:PPalgorithm:1}; moreover
\begin{align}  \label{eq:x_ky_k}
	(\forall k \in \mathbf{N}) 
	~ \hat{x}_{k} := \frac{1}{k+1} \sum^{k}_{i=0} x^{i+1} 
	=  \frac{1}{k+1} \sum^{k+1}_{i=1} x^{i}   
	\text{ and }   \hat{y}_{k} := \frac{1}{k+1} \sum^{k}_{i=0} y^{i+1} 
	=   \frac{1}{k+1} \sum^{k+1}_{i=1} y^{i}.
\end{align}

\begin{lemma} \label{lemma:fsumxkykineq}
	 Let $k \in \mathbf{N}$ and let $(x,y) \in X \times Y$. 
	 The following hold.
	 \begin{enumerate}
	 	\item \label{lemma:fsumxkykineq:y} 
		We have that 
	 	\begin{align*}
	 		&f(\hat{x}_{k}, y) - \frac{1}{k+1} \sum^{k}_{i=0} \left(f(x^{i+1},y) 
			-f(\bar{x}^{i},y) + f(\bar{x}^{i}, y^{i+1})   \right) \\
	 		\leq & \frac{1}{k+1}  \sum^{k}_{i=0} \frac{1}{2 \sigma_{i}} 
			 \left( \norm{y- y^{i}}^{2} - \norm{y-y^{i+1}}^{2} - \norm{y^{i}-y^{i+1}}^{2} \right).
	 	\end{align*}
	 	\item \label{lemma:fsumxkykineq:x} We have that 
	 	\begin{align*}
	 		&-f(x, \hat{y}_{k}) + \frac{1}{k+1} \sum^{k}_{i=0} \left( f(x,y^{i+1}) 
			+ f(x^{i+1}, \bar{y}^{i+1}) -f(x,\bar{y}^{i+1}) \right) \\
	 		\leq &\frac{1}{k+1}  \sum^{k}_{i=0} \frac{1}{2 \tau_{i}} \left(  \norm{x -x^{i}}^{2} 
			-\norm{x -x^{i+1}}^{2} - \norm{x^{i} -x^{i+1}}^{2}\right).
	 	\end{align*}
	 	\item  \label{lemma:fsumxkykineq:f} We have that 
	 	\begin{align*}
	 		&f(\hat{x}_{k}, y)   -f(x, \hat{y}_{k}) \\
	 		\leq & \frac{1}{k+1}  \sum^{k}_{i=0} \frac{1}{2 \sigma_{i}}  \left( \norm{y- y^{i}}^{2} 
			- \norm{y-y^{i+1}}^{2} - \norm{y^{i}-y^{i+1}}^{2} \right)\\
	 		& +\frac{1}{k+1}  \sum^{k}_{i=0} \frac{1}{2 \tau_{i}} \left(  \norm{x -x^{i}}^{2} 
			-\norm{x -x^{i+1}}^{2} - \norm{x^{i} -x^{i+1}}^{2} \right) \\
	 		&+  \frac{1}{k+1} \sum^{k}_{i=0} \left(f(x^{i+1},y) -f(\bar{x}^{i},y) 
			+ f(\bar{x}^{i}, y^{i+1}) -f(x,y^{i+1}) - f(x^{i+1}, \bar{y}^{i+1}) + f(x,\bar{y}^{i+1})  \right). 
	 	\end{align*}
	 \end{enumerate}
\end{lemma}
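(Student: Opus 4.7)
The plan is to derive each inequality by combining the one-step bounds from \cref{lemma:xkykinequalities} with Jensen's inequality applied to the averages $\hat{x}_k, \hat{y}_k$, and then obtain part \cref{lemma:fsumxkykineq:f} by simply summing \cref{lemma:fsumxkykineq:y} and \cref{lemma:fsumxkykineq:x}. The telescoping structure of the $\sigma_i$ and $\tau_i$ terms is already built into \cref{lemma:xkykinequalities}\cref{lemma:xkykinequalities:normy}--\cref{lemma:xkykinequalities:normx}, so no extra manipulation is required beyond averaging.

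For \cref{lemma:fsumxkykineq:y}, I would proceed as follows. Fix $y \in Y$. Applying \cref{lemma:xkykinequalities}\cref{lemma:xkykinequalities:normy} with index $i \in \{0, \dots, k\}$ gives
\begin{align*}
f(\bar{x}^{i}, y) - f(\bar{x}^{i}, y^{i+1})
\leq \frac{1}{2\sigma_{i}}\left(\norm{y-y^{i}}^{2} - \norm{y-y^{i+1}}^{2} - \norm{y^{i}-y^{i+1}}^{2}\right).
\end{align*}
Averaging over $i=0, \dots, k$ yields the right-hand side of \cref{lemma:fsumxkykineq:y}. To introduce $f(\hat{x}_{k},y)$ on the left, I would invoke the convexity of $f(\cdot, y)$ together with the definition of $\hat{x}_{k}$ in \cref{eq:x_ky_k}, which yields
\[
f(\hat{x}_{k}, y) \leq \frac{1}{k+1} \sum_{i=0}^{k} f(x^{i+1}, y),
\]
and therefore $f(\hat{x}_{k},y) - \frac{1}{k+1}\sum_{i=0}^{k} f(x^{i+1},y) \leq 0$. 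Adding this to the averaged bound and rearranging gives precisely \cref{lemma:fsumxkykineq:y}.

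The argument for \cref{lemma:fsumxkykineq:x} is the exact symmetric counterpart. I would apply \cref{lemma:xkykinequalities}\cref{lemma:xkykinequalities:normx} to get, for each $i$,
\begin{align*}
f(x^{i+1}, \bar{y}^{i+1}) - f(x, \bar{y}^{i+1})
\leq \frac{1}{2\tau_{i}}\left(\norm{x-x^{i}}^{2} - \norm{x-x^{i+1}}^{2} - \norm{x^{i}-x^{i+1}}^{2}\right),
\end{align*}
and average. For the $f(x, \hat{y}_{k})$ term, I would use concavity of $f(x, \cdot)$ and the definition of $\hat{y}_{k}$, which gives $-f(x,\hat{y}_{k}) + \frac{1}{k+1}\sum_{i=0}^{k} f(x, y^{i+1}) \leq 0$. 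Adding yields \cref{lemma:fsumxkykineq:x} after rearrangement.

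Finally, for \cref{lemma:fsumxkykineq:f}, I would simply add the two inequalities \cref{lemma:fsumxkykineq:y} and \cref{lemma:fsumxkykineq:x}; the resulting bound collects all six residual terms into the final sum of differences. No part of this is delicate — the only place where one must be careful is tracking signs when moving the summed differences across the inequality, so the main (minor) obstacle is purely bookkeeping. Convexity/concavity enters only through the Jensen step, and the proper, convex, lower semicontinuous assumption on $f(\cdot, y)$ and $-f(x, \cdot)$ stated at the start of \cref{section:APMM} ensures the Jensen inequalities are valid.
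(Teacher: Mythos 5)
Your proposal is correct and follows essentially the same route as the paper: apply \cref{lemma:xkykinequalities}\cref{lemma:xkykinequalities:normy}--\cref{lemma:xkykinequalities:normx} termwise, average over $i$, use convexity of $f(\cdot,y)$ and concavity of $f(x,\cdot)$ (Jensen) to replace the averaged function values by $f(\hat{x}_{k},y)$ and $f(x,\hat{y}_{k})$, and obtain part \cref{lemma:fsumxkykineq:f} by adding parts \cref{lemma:fsumxkykineq:y} and \cref{lemma:fsumxkykineq:x}. The only cosmetic difference is that the paper inserts the term $f(x^{i+1},y)-f(x^{i+1},y)$ (respectively $-f(x,y^{i+1})+f(x,y^{i+1})$) directly into each one-step inequality before summing, whereas you add the Jensen inequality as a separate step; these are the same computation.
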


\begin{proof}
	Because $f(\cdot, y)$ is convex and $f(x, \cdot)$ is concave, we observe that
	\begin{subequations} 
		\begin{align}
			&\frac{1}{k+1} \sum^{k}_{i=0}f(x^{i+1},y) 
			\geq f\left( \frac{1}{k+1}\sum^{k}_{i=0}x^{i+1},y\right)=f(\hat{x}_{k},y); \label{eq:lemma:fsumxkykineq:xk}\\
			&\frac{1}{k+1} \sum^{k}_{i=0}f(x,y^{i+1}) 
			\leq f \left(x, \frac{1}{k+1}\sum^{k}_{i=0} y^{i+1} \right) =f(x,\hat{y}_{k}). \label{eq:lemma:fsumxkykineq:yk}
		\end{align}
	\end{subequations}

\cref{lemma:fsumxkykineq:y}:	
According to \cref{lemma:xkykinequalities}\cref{lemma:xkykinequalities:normy}, 
for every $i \in \{0,1,\ldots,k\}$,
\[  
f(x^{i+1},y) -f(x^{i+1},y) 	 + f(\bar{x}^{i},y) - f (\bar{x}^{i}, y^{i+1}) \leq  
\frac{1}{2 \sigma_{i}}\left( \norm{y - y^{i}}^{2} -\norm{y-y^{i+1}}^{2}  
 - \norm{y^{i} -y^{i+1}}^{2}   \right).
\]
Sum  the inequalities above over $i$ from $0$ to $k$. 
Multiply both sides of the new inequality with $\frac{1}{k+1}$  and 
use \cref{eq:lemma:fsumxkykineq:xk} to deduce  \cref{lemma:fsumxkykineq:y}.

\cref{lemma:fsumxkykineq:x}: As a consequence of \cref{lemma:xkykinequalities}\cref{lemma:xkykinequalities:normx},  
for every $i \in \{0,1,\ldots,k\}$,
\[ 
-f(x,y^{i+1}) + f(x,y^{i+1}) + f(x^{i+1}, \bar{y}^{i+1}) 
- f(x, \bar{y}^{i+1}) \leq \frac{1}{2 \tau_{i}} \left(  \norm{x-x^{i}}^{2} 
-\norm{x - x^{i+1}}^{2}   - \norm{x^{i} -x^{i+1}}^{2} \right).
\]
Similarly with the proof of \cref{lemma:fsumxkykineq:y}, 
sum  the last inequalities above over $i$ from $0$ to $k$. 
Then multiply both sides of the new inequality with $\frac{1}{k+1}$  
and use \cref{eq:lemma:fsumxkykineq:yk} to deduce  \cref{lemma:fsumxkykineq:x}.

\cref{lemma:fsumxkykineq:f}: Add the inequalities in \cref{lemma:fsumxkykineq:y} and \cref{lemma:fsumxkykineq:x} to establish the required inequality 
in \cref{lemma:fsumxkykineq:f}.
\end{proof}

\begin{corollary} \label{corollary:basicxkyksumineq}
	Let $k \in \mathbf{N}$. The following statements hold. 
	\begin{enumerate}
		\item   \label{corollary:basicxkyksumineq:xyk+1} The following inequalities hold.
		\begin{subequations} \label{eq:corollary:basicxkyksumineq:xyk+1:yk+1}
			\begin{align}
				&f(\hat{x}_{k}, \hat{y}_{k}) - \frac{1}{k+1} \sum^{k}_{i=0} \left(f(x^{i+1},\hat{y}_{k}) 
				-f(\bar{x}^{i},\hat{y}_{k}) + f(\bar{x}^{i}, y^{i+1})   \right) \\
				\leq & \frac{1}{k+1}  \sum^{k}_{i=0} \frac{1}{2 \sigma_{i}}  
				\left( \norm{\hat{y}_{k}- y^{i}}^{2} - \norm{\hat{y}_{k}-y^{i+1}}^{2}
				 - \norm{y^{i}-y^{i+1}}^{2} \right),
			\end{align}
		\end{subequations}		
	and
	\begin{subequations} \label{eq:corollary:basicxkyksumineq:xyk+1:xk+1}
			\begin{align}
			&-f(\hat{x}_{k}, \hat{y}_{k}) 
			+ \frac{1}{k+1} \sum^{k}_{i=0} \left( f(\hat{x}_{k},y^{i+1}) 
			+ f(x^{i+1}, \bar{y}^{i+1}) -f(\hat{x}_{k},\bar{y}^{i+1}) \right)\\
			\leq & \frac{1}{k+1}  \sum^{k}_{i=0} \frac{1}{2 \tau_{i}} \left(  \norm{\hat{x}_{k} -x^{i}}^{2} -\norm{\hat{x}_{k} -x^{i+1}}^{2} 
			- \norm{x^{i} -x^{i+1}}^{2} \right).
		\end{align}
	\end{subequations}

		\item  \label{corollary:basicxkyksumineq:xyhat} 
		Suppose that $(x^{*}, y^{*}) \in X \times Y$ is a 
		saddle-point of $f$. Then 
\begin{subequations} \label{eq:corollary:basicxkyksumineq:xyhat:yhat} 
\begin{align}
&f(x^{*}, y^{*}) - \frac{1}{k+1} \sum^{k}_{i=0} \left(f(x^{i+1},  y^{*})
-f(\bar{x}^{i}, y^{*}) + f(\bar{x}^{i}, y^{i+1})   \right) \\
\leq & \frac{1}{k+1}  \sum^{k}_{i=0} \frac{1}{2 \sigma_{i}}  \left( \norm{y^{*} - y^{i}}^{2}
- \norm{y^{*} -y^{i+1}}^{2} - \norm{y^{i}-y^{i+1}}^{2} \right)
\end{align}
\end{subequations}
and 
\begin{subequations} \label{eq:corollary:basicxkyksumineq:xyhat:xhat} 
\begin{align}
&-f(x^{*}, y^{*}) + \frac{1}{k+1} \sum^{k}_{i=0} \left( f(x^{*} ,y^{i+1}) + f(x^{i+1}, 
\bar{y}^{i+1}) -f(x^{*},\bar{y}^{i+1}) \right) \\
\leq & \frac{1}{k+1}  \sum^{k}_{i=0} \frac{1}{2 \tau_{i}} \left(  \norm{x^{*} -x^{i}}^{2} 
-\norm{x^{*} -x^{i+1}}^{2} - \norm{x^{i} -x^{i+1}}^{2} \right).
\end{align}
\end{subequations}
\item \label{corollary:basicxkyksumineq:leqhat}
Suppose that $(x^{*}, y^{*}) \in X \times 
Y$ is a saddle-point of $f$. 
Then 
\begin{align*}
	&
  \sum^{k}_{i=0} \left( f(x^{*},y^{i+1}) + f(x^{i+1}, \bar{y}^{i+1}) -f(x^{*},\bar{y}^{i+1}) 
  -f(x^{i+1}, y^{*}) + f(\bar{x}^{i}, y^{*}) - f(\bar{x}^{i}, y^{i+1})    \right) \\
	\leq &    \sum^{k}_{i=0} \frac{1}{2 \sigma_{i}}  \left( \norm{ y^{*}- y^{i}}^{2} 
	- \norm{ y^{*}-y^{i+1}}^{2} - \norm{y^{i}-y^{i+1}}^{2} \right) \\
	& +  \sum^{k}_{i=0} \frac{1}{2 \tau_{i}} \left(  \norm{x^{*} -x^{i}}^{2}
	   -\norm{x^{*} -x^{i+1}}^{2} - \norm{x^{i} -x^{i+1}}^{2}\right).
\end{align*}
	\end{enumerate}
\end{corollary}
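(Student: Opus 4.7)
The plan is to derive all three parts of \cref{corollary:basicxkyksumineq} as direct substitution consequences of \cref{lemma:fsumxkykineq}, with no additional machinery required. First I would verify the basic fact that $\hat{x}_k \in X$ and $\hat{y}_k \in Y$: by construction \cref{eq:PPalgorithm:1:x} and \cref{eq:PPalgorithm:1:y} guarantee that every iterate $x^{i+1} \in X$ and $y^{i+1} \in Y$, and since $X$ and $Y$ are convex, the averages in \cref{eq:x_ky_k} lie in $X$ and $Y$ respectively. This legitimizes using $\hat{x}_k$ and $\hat{y}_k$ as test points.

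For \cref{corollary:basicxkyksumineq:xyk+1}, I would substitute $y = \hat{y}_k$ into \cref{lemma:fsumxkykineq}\cref{lemma:fsumxkykineq:y} to obtain \cref{eq:corollary:basicxkyksumineq:xyk+1:yk+1}, and substitute $x = \hat{x}_k$ into \cref{lemma:fsumxkykineq}\cref{lemma:fsumxkykineq:x} to obtain \cref{eq:corollary:basicxkyksumineq:xyk+1:xk+1}. For \cref{corollary:basicxkyksumineq:xyhat}, I would substitute $y = y^*$ into \cref{lemma:fsumxkykineq}\cref{lemma:fsumxkykineq:y} and $x = x^*$ into \cref{lemma:fsumxkykineq}\cref{lemma:fsumxkykineq:x}; these substitutions are valid because $(x^*, y^*) \in X \times Y$ by assumption. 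Note that the saddle-point property itself is not used at this stage — it is only the membership $(x^*,y^*) \in X\times Y$ that matters — but the hypothesis is naturally retained because the inequalities in \cref{corollary:basicxkyksumineq:xyhat} are explicitly about the saddle-point value $f(x^*,y^*)$.

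For \cref{corollary:basicxkyksumineq:leqhat}, I would simply add the two inequalities \cref{eq:corollary:basicxkyksumineq:xyhat:yhat} and \cref{eq:corollary:basicxkyksumineq:xyhat:xhat} from \cref{corollary:basicxkyksumineq:xyhat}. The terms $f(x^*, y^*)$ and $-f(x^*, y^*)$ on the left-hand sides cancel, and the two sums over $i$ consolidate into a single sum containing all six $f$-terms in the displayed inequality. Multiplying the resulting inequality on both sides by $k+1$ clears the leading factor $\tfrac{1}{k+1}$ and yields the stated bound exactly.

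There is no genuine mathematical obstacle here — the content of the corollary is already embedded in \cref{lemma:fsumxkykineq}, and the result is a packaging of three convenient specializations that will be invoked repeatedly in \cref{section:ConvergenceResults}. The only care required is bookkeeping: tracking which terms cancel after addition in part \cref{corollary:basicxkyksumineq:leqhat}, and confirming the convexity argument that places $\hat{x}_k, \hat{y}_k$ in the feasible sets so that the substitutions in part \cref{corollary:basicxkyksumineq:xyk+1} are legitimate.
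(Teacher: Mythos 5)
Your treatment of \cref{corollary:basicxkyksumineq:xyk+1} is correct and matches the paper (the observation that $\hat{x}_k\in X$ and $\hat{y}_k\in Y$ by convexity is a legitimization the paper leaves implicit), and your route to \cref{corollary:basicxkyksumineq:leqhat} — adding the two inequalities of \cref{corollary:basicxkyksumineq:xyhat} and cancelling $\pm f(x^*,y^*)$ — is a valid, slightly tidier variant of the paper's argument, which instead adds the two substituted lemma inequalities directly and then discards the term $f(\hat{x}_k,y^*)-f(x^*,\hat{y}_k)\geq 0$ via \cref{fact:saddlepoint}; the two routes are equivalent because the two one-sided saddle-point inequalities you implicitly consume in part \cref{corollary:basicxkyksumineq:xyhat} sum to exactly that nonnegativity.

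However, there is a genuine gap in your part \cref{corollary:basicxkyksumineq:xyhat}: the claim that ``the saddle-point property itself is not used at this stage'' is false. Substituting $y=y^*$ into \cref{lemma:fsumxkykineq}\cref{lemma:fsumxkykineq:y} produces an inequality whose left-hand side begins with $f(\hat{x}_k,y^*)$, not with $f(x^*,y^*)$ as in \cref{eq:corollary:basicxkyksumineq:xyhat:yhat}; likewise substituting $x=x^*$ into \cref{lemma:fsumxkykineq}\cref{lemma:fsumxkykineq:x} yields a left-hand side beginning with $-f(x^*,\hat{y}_k)$, not $-f(x^*,y^*)$. To replace these by the stated quantities while preserving the direction of the inequality you must invoke $f(x^*,y^*)\leq f(\hat{x}_k,y^*)$ and $-f(x^*,y^*)\leq -f(x^*,\hat{y}_k)$, which are precisely the two halves of the saddle-point definition \cref{eq:solution} evaluated at $\hat{x}_k\in X$ and $\hat{y}_k\in Y$. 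This is exactly what the paper does via its displayed inequalities \cref{eq:corollary:basicxkyksumineq:xyhat:f} and \cref{eq:corollary:basicxkyksumineq:xyhat:-f}. Mere membership $(x^*,y^*)\in X\times Y$ only licenses the substitution; it does not close the gap between $f(\hat{x}_k,y^*)$ and $f(x^*,y^*)$. Once this step is restored, the rest of your argument goes through.
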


\begin{proof}
	\cref{corollary:basicxkyksumineq:xyk+1}: 
	Substitute $y$ in \cref{lemma:fsumxkykineq}\cref{lemma:fsumxkykineq:y} 
	(resp.\,$x$ in \cref{lemma:fsumxkykineq}\cref{lemma:fsumxkykineq:x})  
	with $y =\hat{y}_{k}$ (resp.\,$x=\hat{x}_{k}$) to obtain
	 \cref{eq:corollary:basicxkyksumineq:xyk+1:yk+1} 
	 (resp.\,\cref{eq:corollary:basicxkyksumineq:xyk+1:xk+1}).
	
	\cref{corollary:basicxkyksumineq:xyhat}: Because $(x^{*}, y^{*}) \in X \times Y$ is a 
	saddle-point of $f$, via \cref{eq:solution}, 
	we have that 
	\begin{subequations}
		\begin{align}
	&	f(x^{*}, y^{*}) \leq	f(\hat{x}_{k}, y^{*}); 
	\label{eq:corollary:basicxkyksumineq:xyhat:f}\\
	&  	-f(x^{*}, y^{*})  \leq -f(x^{*}, \hat{y}_{k}).  
	\label{eq:corollary:basicxkyksumineq:xyhat:-f}
		\end{align}
	\end{subequations}
	
	Replace $y$ in \cref{lemma:fsumxkykineq}\cref{lemma:fsumxkykineq:y} (resp.\,$x$ in 
	\cref{lemma:fsumxkykineq}\cref{lemma:fsumxkykineq:x})  by $y =y^{*}$ 
	(resp.\,$x=x^{*}$) and apply \cref{eq:corollary:basicxkyksumineq:xyhat:f} 
	(resp.\,\cref{eq:corollary:basicxkyksumineq:xyhat:-f})  to derive 
	\cref{eq:corollary:basicxkyksumineq:xyhat:yhat}  
	(resp.\,\cref{eq:corollary:basicxkyksumineq:xyhat:xhat}).
	
	\cref{corollary:basicxkyksumineq:leqhat}:  Substitute $y=y^{*}$ in 
	\cref{lemma:fsumxkykineq}\cref{lemma:fsumxkykineq:y} and replace $x=x^{*}$ in 
	\cref{lemma:fsumxkykineq}\cref{lemma:fsumxkykineq:x}. 
	Then add these two new 
	inequalities to get that
	\begin{align*}
		&f(\hat{x}_{k}, y^{*}) -f(x^{*}, \hat{y}_{k}) \\
		&  +
		\frac{1}{k+1} \sum^{k}_{i=0} \left( f(x^{*} ,y^{i+1}) + f(x^{i+1}, \bar{y}^{i+1}) 
		-f(x^{*},\bar{y}^{i+1}) -f(x^{i+1}, y^{*}) + f(\bar{x}^{i}, y^{*}) - f(\bar{x}^{i}, y^{i+1})    
		\right)  \\
		\leq  & \frac{1}{k+1}  \sum^{k}_{i=0} \frac{1}{2 \sigma_{i}}  \left( \norm{ y^{*} - 
		y^{i}}^{2} - \norm{ y^{*} -y^{i+1}}^{2} - \norm{y^{i}-y^{i+1}}^{2} \right)\\
		&+  \frac{1}{k+1}  \sum^{k}_{i=0} \frac{1}{2 \tau_{i}} \left(  \norm{x^{*} -x^{i}}^{2} 
		-\norm{x^{*} -x^{i+1}}^{2} - \norm{x^{i} -x^{i+1}}^{2} \right).
	\end{align*} 
Because $ (x^{*}, y^{*})$ is  a saddle-point of $f$, via \cref{fact:saddlepoint}, we have 
that 
$f(\hat{x}_{k}, y^{*}) -f(x^{*}, \hat{y}_{k}) \geq 0$. Altogether, we obtain the required 
inequality in \cref{corollary:basicxkyksumineq:leqhat}.
\end{proof}

The following result provides upper and lower bounds of the sequence $\left( 
f(\hat{x}_{k}, \hat{y}_{k}) -f(x^{*}, y^{*}) \right)_{k \in \mathbf{N}}$, 
which plays an essential role in the proof of  $f(\hat{x}_{k}, \hat{y}_{k}) \to f(x^{*}, 
y^{*})$
in \cref{theorem:convergencef}\cref{theorem:convergencef:fktohat} below.
This result may also be helpful to deduce more convergence results on
 $f(\hat{x}_{k},\hat{y}_{k}) \to f(x^{*},y^{*})$ associated with a
 general convex-concave function f.

\begin{proposition} \label{proposition:fxyk+1-xyhat}
Suppose that $(x^{*}, y^{*}) \in X \times Y$ is a saddle-point of $f$. 	Let $k \in 
\mathbf{N}$.  
\begin{enumerate}
\item  \label{proposition:fxyk+1-xyhat:k-hat} The following inequality holds.
\begin{align*}
& f(\hat{x}_{k}, \hat{y}_{k}) -f(x^{*}, y^{*})\\
 \leq  &\frac{1}{k+1}  \sum^{k}_{i=0} \frac{1}{2 \tau_{i}} \left(  \norm{x^{*} -x^{i}}^{2} 
 -\norm{x^{*} -x^{i+1}}^{2} - \norm{x^{i} -x^{i+1}}^{2} \right)\\
&+ \frac{1}{k+1}  \sum^{k}_{i=0} \frac{1}{2 \sigma_{i}}  \left( \norm{\hat{y}_{k}- y^{i}}^{2}
- \norm{\hat{y}_{k}-y^{i+1}}^{2} - \norm{y^{i}-y^{i+1}}^{2} \right) \\
&	+ \frac{1}{k+1} \sum^{k}_{i=0} \left(f(x^{i+1},\hat{y}_{k}) 
-f(\bar{x}^{i},\hat{y}_{k}) + f(\bar{x}^{i}, y^{i+1})   -f(x^{*},y^{i+1}) - f(x^{i+1}, 
\bar{y}^{i+1}) + f(x^{*},\bar{y}^{i+1}) \right).
\end{align*}
\item   \label{proposition:fxyk+1-xyhat:hat-k} We have the inequality below. 
\begin{align*}
&f(x^{*}, y^{*}) 	-f(\hat{x}_{k}, \hat{y}_{k}) \\
\leq &  \frac{1}{k+1}  \sum^{k}_{i=0} \frac{1}{2 \tau_{i}} \left(  \norm{\hat{x}_{k} -x^{i}}^{2} 
-\norm{\hat{x}_{k} -x^{i+1}}^{2} - \norm{x^{i} -x^{i+1}}^{2} \right)\\
&+  \frac{1}{k+1}  \sum^{k}_{i=0} \frac{1}{2 \sigma_{i}}  \left( \norm{y^{*} - y^{i}}^{2} - 
\norm{y^{*}-y^{i+1}}^{2} - \norm{y^{i}-y^{i+1}}^{2} \right)\\
& + \frac{1}{k+1} \sum^{k}_{i=0} \left(f(x^{i+1},y^{*}) -f(\bar{x}^{i},y^{*}) + f(\bar{x}^{i}, 
y^{i+1})   -f(\hat{x}_{k},y^{i+1}) - f(x^{i+1}, \bar{y}^{i+1}) + f(\hat{x}_{k},\bar{y}^{i+1}) 
\right).
\end{align*}
\end{enumerate}
\end{proposition}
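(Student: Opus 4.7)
The plan is to observe that both inequalities are just obtained by adding a $y$-type inequality and an $x$-type inequality from \cref{corollary:basicxkyksumineq} with the appropriate choice of test point, and then transposing the residual $f$-terms to the right-hand side. No new identity is required; the only thing to check is that the bookkeeping of $f$-terms matches.

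For \cref{proposition:fxyk+1-xyhat:k-hat}, I would add \cref{eq:corollary:basicxkyksumineq:xyk+1:yk+1} (which arises from taking $y=\hat{y}_k$ in the $y$-inequality of \cref{lemma:fsumxkykineq}) together with \cref{eq:corollary:basicxkyksumineq:xyhat:xhat} (which arises from taking $x=x^{*}$ in the $x$-inequality of \cref{lemma:fsumxkykineq}, combined with the saddle-point inequality $-f(x^{*},y^{*})\leq -f(x^{*},\hat{y}_k)$). The two leading terms $f(\hat{x}_k,\hat{y}_k)$ and $-f(x^{*},y^{*})$ combine to give $f(\hat{x}_k,\hat{y}_k)-f(x^{*},y^{*})$ on the left; the remaining averaged $f$-terms, namely
\[
\tfrac{1}{k+1}\sum_{i=0}^{k}\bigl(-f(x^{i+1},\hat{y}_k)+f(\bar{x}^{i},\hat{y}_k)-f(\bar{x}^{i},y^{i+1})+f(x^{*},y^{i+1})+f(x^{i+1},\bar{y}^{i+1})-f(x^{*},\bar{y}^{i+1})\bigr),
\]
get moved to the right-hand side with reversed signs, producing exactly the residual sum displayed in the statement. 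The $\sigma_i$-telescoping block (with $y=\hat{y}_k$) and the $\tau_i$-telescoping block (with $x=x^{*}$) then appear on the right unchanged.

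For \cref{proposition:fxyk+1-xyhat:hat-k} I would proceed symmetrically: add \cref{eq:corollary:basicxkyksumineq:xyhat:yhat} (the $y$-inequality with $y=y^{*}$, which has already used $f(x^{*},y^{*})\leq f(\hat{x}_k,y^{*})$) and \cref{eq:corollary:basicxkyksumineq:xyk+1:xk+1} (the $x$-inequality with $x=\hat{x}_k$). This time the leading terms combine to $f(x^{*},y^{*})-f(\hat{x}_k,\hat{y}_k)$ on the left, and the residual $f$-terms $\tfrac{1}{k+1}\sum_i(-f(x^{i+1},y^{*})+f(\bar{x}^{i},y^{*})-f(\bar{x}^{i},y^{i+1})+f(\hat{x}_k,y^{i+1})+f(x^{i+1},\bar{y}^{i+1})-f(\hat{x}_k,\bar{y}^{i+1}))$ move to the right-hand side with reversed signs, matching the stated form. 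The $\sigma_i$-telescoping block now carries $y^{*}$ and the $\tau_i$-telescoping block carries $\hat{x}_k$, consistent with which test point was used in each inequality.

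There is no real obstacle here; the only step that requires attention is making sure that the signs of the six averaged $f$-terms in the residual match up after transposition, and that the appropriate convexity/concavity inequalities \cref{eq:lemma:fsumxkykineq:xk}--\cref{eq:lemma:fsumxkykineq:yk} were already absorbed into \cref{corollary:basicxkyksumineq} so that one does not double-count them. Once these two add-and-rearrange steps are carried out, both inequalities follow.
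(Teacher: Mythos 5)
Your proposal is correct and is essentially identical to the paper's own proof: part (i) is obtained by adding \cref{eq:corollary:basicxkyksumineq:xyk+1:yk+1} and \cref{eq:corollary:basicxkyksumineq:xyhat:xhat}, and part (ii) by adding \cref{eq:corollary:basicxkyksumineq:xyhat:yhat} and \cref{eq:corollary:basicxkyksumineq:xyk+1:xk+1}, then transposing the residual $f$-terms. Your sign bookkeeping for the six averaged $f$-terms checks out, so nothing further is needed.
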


\begin{proof}
\cref{proposition:fxyk+1-xyhat:k-hat}:  
Add \cref{eq:corollary:basicxkyksumineq:xyk+1:yk+1} 
and \cref{eq:corollary:basicxkyksumineq:xyhat:xhat}  
in \cref{corollary:basicxkyksumineq} to establish the required inequality 
in \cref{proposition:fxyk+1-xyhat:k-hat}. 
	
	\cref{proposition:fxyk+1-xyhat:hat-k}:  
	Sum \cref{eq:corollary:basicxkyksumineq:xyhat:yhat} 
	and \cref{eq:corollary:basicxkyksumineq:xyk+1:xk+1}   
	 in \cref{corollary:basicxkyksumineq} to deduce the desired inequality in  
	 \cref{proposition:fxyk+1-xyhat:hat-k}.
\end{proof}

\begin{lemma} \label{lemma:fKuv}
Consider the operator $K: \mathcal{H}_{1} \to \mathcal{H}_{2}$ and functions 
$g: \mathcal{H}_{1} \to \mathbf{R} \cup \{+\infty\}$ and 
$h: \mathcal{H}_{2} \to \mathbf{R} \cup \{+\infty\}$. 
Let $(\forall i \in \{1,2,3\})$ $(u_{i}, v_{i}) \in \mathcal{H}_{1} \times \mathcal{H}_{2} $.
 
Define $F: X\times Y \to \mathbf{R} \cup \{- \infty, +\infty\} : (u,v) \mapsto \innp{Ku, v}+ 
g(u) - h(v)$. 
Then 
\begin{align*}
	&F(u_{1},v_{1}) - F(u_{1},v_{2}) +F(u_{2},v_{2}) - F(u_{2},v_{3}) +F(u_{3},v_{3}) 
	-F(u_{3},v_{1})\\ 
	=&\innp{K(u_{1} -u_{2}), v_{3}-v_{2}} - \innp{K(u_{1}-u_{3}), v_{3}-v_{1}}.
\end{align*}
\end{lemma}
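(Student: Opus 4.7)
The plan is to prove this by direct expansion, exploiting the fact that in the six-term alternating sum the contributions from $g$ and from $h$ cancel, leaving only the bilinear part, which can then be rearranged into the desired form.

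First I would substitute the definition $F(u,v)=\langle Ku,v\rangle+g(u)-h(v)$ into the left-hand side and group the terms by origin. Pair the summands as $F(u_1,v_1)-F(u_1,v_2)$, $F(u_2,v_2)-F(u_2,v_3)$, $F(u_3,v_3)-F(u_3,v_1)$. In each pair the first argument is the same, so the $g$-contribution vanishes identically, while the $h$-contributions across the three pairs form the telescoping sum $[-h(v_1)+h(v_2)]+[-h(v_2)+h(v_3)]+[-h(v_3)+h(v_1)]=0$. Thus the whole left-hand side reduces to the purely bilinear expression
\[
\langle Ku_1,v_1-v_2\rangle+\langle Ku_2,v_2-v_3\rangle+\langle Ku_3,v_3-v_1\rangle.
\]

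Next I would expand the right-hand side using bilinearity of $\langle K\cdot,\cdot\rangle$:
\[
\langle K(u_1-u_2),v_3-v_2\rangle-\langle K(u_1-u_3),v_3-v_1\rangle
=\langle Ku_1,(v_3-v_2)-(v_3-v_1)\rangle-\langle Ku_2,v_3-v_2\rangle+\langle Ku_3,v_3-v_1\rangle,
\]
collect the coefficient of $Ku_1$ to $v_1-v_2$, and rewrite $-\langle Ku_2,v_3-v_2\rangle=\langle Ku_2,v_2-v_3\rangle$. After these simplifications the right-hand side becomes exactly the three-term bilinear sum obtained above, which closes the identity.

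There is essentially no obstacle here: the argument is a bookkeeping exercise in bilinearity, and the entire content lies in the observation that the $g$- and $h$-terms pair up to cancel because of the specific pattern $(u_1,v_1),(u_1,v_2),(u_2,v_2),(u_2,v_3),(u_3,v_3),(u_3,v_1)$ of arguments. The only care required is keeping track of signs and making sure the telescoping in $h$ is carried out in the correct cyclic order; I would therefore write out the six expansions explicitly in a short aligned display rather than relying on shorthand, to make the cancellation unambiguous.
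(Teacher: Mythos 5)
Your proposal is correct and follows essentially the same route as the paper: expand $F$, note that the $g$-terms cancel within each same-first-argument pair and the $h$-terms telescope cyclically, then match the remaining bilinear sum $\innp{Ku_{1},v_{1}-v_{2}}+\innp{Ku_{2},v_{2}-v_{3}}+\innp{Ku_{3},v_{3}-v_{1}}$ with the right-hand side by bilinearity. The only cosmetic difference is that you expand both sides toward a common middle expression, whereas the paper rewrites the left-hand side directly into the right-hand side.
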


\begin{proof}
According to the definition of the function $F$, it is clear that
\begin{align*}
&F(u_{1},v_{1}) - F(u_{1},v_{2}) +F(u_{2},v_{2}) 
- F(u_{2},v_{3}) +F(u_{3},v_{3}) -F(u_{3},v_{1})\\
 =& \innp{Ku_{1}, v_{1}-v_{2}} - h(v_{1}) + h (v_{2}) 
 +\innp{Ku_{2}, v_{2}-v_{3}} - h(v_{2}) + h (v_{3}) 
 +\innp{Ku_{3}, v_{3}-v_{1}} - h(v_{3}) + h (v_{1})\\
=& \innp{Ku_{1}, v_{1}-v_{3} +v_{3}-v_{2}}  +\innp{Ku_{2}, v_{2}-v_{3}}  
+\innp{Ku_{3}, v_{3}-v_{1}}\\
=&\innp{K(u_{1} -u_{2}), v_{3}-v_{2}} - \innp{K(u_{1}-u_{3}), v_{3}-v_{1}}.
\end{align*} 
\end{proof}

\section{Convergence results} \label{section:ConvergenceResults}
Let $K \in \mathcal{B}(\mathcal{H}_{1} , \mathcal{H}_{2} )$, i.e., $K : \mathcal{H}_{1} \to 
\mathcal{H}_{2}$ is a continuous linear operator with the operator norm
\begin{align} \label{eq:definenormK}
\norm{K} := \sup \{ \norm{Kx} ~:~ x \in \mathcal{H}_{1} \text{ with } \norm{x} \leq 1 \}.
\end{align}
Let $g: \mathcal{H}_{1} \to \mathbf{R} \cup \{ +\infty\}$ and 
$h: \mathcal{H}_{2} \to  \mathbf{R} \cup \{ +\infty\}$ satisfy that 
both $g$ and $h$ are proper, convex, and lower semicontinuous. 
In the whole section, we assume that 
\begin{align}  \label{eq:fspecialdefine}
(\forall (x,y) \in X \times Y) \quad 	f(x,y) =\innp{Kx, y} + g(x)  -h(y).
\end{align}

In \cref{lemma:algorithmsymplity} below, we illustrate that
when we apply our iterate scheme \cref{eq:PPalgorithm:1} 
specifically to a convex-concave function in the form of \cref{eq:fspecialdefine}, 
our iterate scheme is actually a generalization of the iterate schemes studied in 
\cite{PockCremersBischofChambolle2009algorithm}, \cite{EsserZhangChan2010},
\cite{HeYuan2010convergence}, and \cite{ChambollePock2011}.
The references mentioned above worked on involved parameters as constants,
while our iterate scheme accommodates involved parameters as general sequences.

 In \cite[Theorem~1]{ChambollePock2011}, 
the authors considered the convergence of 
iterates generated by the iterate scheme \cref{eq:lemma:algorithmsymplity} 
associated with 
$(\forall k \in \mathbf{N})$ $\sigma_{k} \equiv \sigma $, $\tau_{k} \equiv \tau $, 
$\alpha_{k} \equiv 1$,  $\beta_{k} \equiv 0$, $X=\mathcal{H}_{1}$, and 
$Y=\mathcal{H}_{2}$.  
Multiple results in this section mimic the idea used in  the proof of
\cite[Theorem~1]{ChambollePock2011}.
In particular, \cref{theorem:pointsconvergence} is almost a generalization of 
\cite[Theorem~1]{ChambollePock2011} but our assumption $\norm{K} \limsup_{i \to 
	\infty} 
\tau_{i} < \frac{1}{2}$ and $\norm{K} \limsup_{i \to \infty} \sigma_{i} < \frac{1}{2}$ is 
stronger than that $\tau \sigma \norm{K}^{2} <1$ 
used in \cite[Theorem~1]{ChambollePock2011}. 
Although the authors in \cite{ChambollePock2011} 
worked only on the convergence of iterates
and didn't explicitly consider any   convergence of function values related to iterates, 
by using some techniques applied in \cite[Section~2]{Oy2023subgradient}, 
we shall provide the convergence 
$f(\hat{x}_{k}, \hat{x}_{k}) \to f(x^{*},y^{*}) $ in \cref{theorem:convergencef}, 
where $((x^{k},y^{k}))_{k \in \mathbf{N}}$ is generated by the 
iterate scheme \cref{eq:lemma:algorithmsymplity} below, 
$(\forall k \in \mathbf{N})$ 
$\hat{x}_{k} := \frac{1}{k+1} \sum^{k}_{i=0} x^{i+1} $ and $\hat{y}_{k} := 
\frac{1}{k+1} \sum^{k}_{i=0} y^{i+1} $, and $(x^{*},y^{*}) \in X \times Y$ is a 
saddle-point of $f$.

\begin{lemma} \label{lemma:algorithmsymplity}
Recall that $(\tau_{k})_{k \in \mathbf{N}}$ and $(\sigma_{k})_{k \in \mathbf{N}}$ 
are in $\mathbf{R}_{++}$ and that $(\alpha_{k})_{k \in \mathbf{N}\cup\{-1\}}$ and 
$(\beta_{k})_{k \in \mathbf{N}}$ are  in $\mathbf{R}_{+}$. 
Let $(x^{0}, y^{0}) \in X \times Y$. 
Set $\bar{x}^{0}=x^{0}$ and $\bar{y}^{0} =y^{0}$ 
$($i.e., $x^{-1} =x^{0}$ and $y^{-1} =y^{0}$$)$.  
The iterate scheme presented in  \cref{eq:PPalgorithm:1} associated with $f$ 
defined in \cref{eq:fspecialdefine} is equivalent to the iterate scheme below: 
for every $k \in \mathbf{N}$, 
	\begin{subequations}\label{eq:lemma:algorithmsymplity}
		\begin{align}
			&y^{k+1} =  \Prox^{Y}_{ \sigma_{k} h} (\sigma_{k} K\bar{x}^{k}  +  y^{k} );
 \label{lemma:algorithmsymplity:yk}\\
			& \bar{y}^{k+1} = y^{k+1} + \beta_{k} (y^{k+1} -y^{k}); 
			\label{eq:lemma:algorithmsymplity:bary}\\
			&x^{k+1} = \Prox^{X}_{\tau_{k}  g} ( -\tau_{k} K^{*} \bar{y}^{k+1}+ x^{k} ); 
\label{lemma:algorithmsymplity:xk}\\
			& \bar{x}^{k+1} = x^{k+1} + \alpha_{k} (x^{k+1} -x^{k}). 
			\label{eq:lemma:algorithmsymplity:barx} 
		\end{align}
	\end{subequations}
Moreover, we have that 
\begin{align*}
	&(\forall x \in X) \quad  
	\innp{\frac{x^{k} -x^{k+1}}{\tau_{k}} -K^{*}\bar{y}^{k+1}, x -x^{k+1}} 
 + g(x^{k+1}) \leq g(x);\\
	&(\forall y \in Y) \quad 
	\innp{\frac{y^{k}-y^{k+1}}{\sigma_{k}} +K\bar{x}^{k}, y - y^{k+1}} 
	  +h(y^{k+1}) \leq h(y).
\end{align*} 
\end{lemma}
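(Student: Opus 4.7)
The plan is to verify the two schemes coincide update-by-update, and then read off the two inequalities by applying \cref{fact:proxsubdiff} to each $\argmin$.

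First, observe that the $\bar{x}$ and $\bar{y}$ updates in \cref{eq:PPalgorithm:1:bary,eq:PPalgorithm:1:barx} are literally identical to \cref{eq:lemma:algorithmsymplity:bary,eq:lemma:algorithmsymplity:barx}, so only the proximity steps \cref{eq:PPalgorithm:1:y,eq:PPalgorithm:1:x} need work. For the $y$-step, I would plug \cref{eq:fspecialdefine} into \cref{eq:PPalgorithm:1:y}. The term $g(\bar{x}^{k})$ is a constant with respect to $y$ and may be dropped, leaving $y^{k+1} = \argmax_{y \in Y}\{\innp{K\bar{x}^{k},y} - h(y) - \tfrac{1}{2\sigma_{k}}\norm{y-y^{k}}^{2}\}$. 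Negating the objective (to convert $\argmax$ into $\argmin$) and multiplying by $\sigma_{k}>0$, I would complete the square in $y$ to absorb the linear term $-\sigma_{k}\innp{K\bar{x}^{k},y}$ into the quadratic, which identifies the update as $\Prox^{Y}_{\sigma_{k} h}(y^{k}+\sigma_{k} K\bar{x}^{k})$, matching \cref{lemma:algorithmsymplity:yk}.

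Next, I would do the analogous manipulation for the $x$-step. Drop the constant $-h(\bar{y}^{k+1})$, rewrite $\innp{Kx,\bar{y}^{k+1}}=\innp{x,K^{*}\bar{y}^{k+1}}$ using the definition of the adjoint, multiply the objective by $\tau_{k}>0$, and complete the square in $x$ to absorb the linear term $\tau_{k}\innp{x,K^{*}\bar{y}^{k+1}}$. This rewrites \cref{eq:PPalgorithm:1:x} as $\Prox^{X}_{\tau_{k} g}(x^{k}-\tau_{k} K^{*}\bar{y}^{k+1})$, matching \cref{lemma:algorithmsymplity:xk}. The two schemes are then equivalent.

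For the two displayed inequalities, I would apply \cref{fact:proxsubdiff} to the equivalent $\argmin$ representations exposed in the previous paragraph. For the $x$-inequality, take the function in \cref{fact:proxsubdiff} to be $\tilde g(\cdot):=\tau_{k} g(\cdot)+\tau_{k}\innp{K^{*}\bar{y}^{k+1},\cdot}$ (still proper, convex, and lower semicontinuous, since the extra linear term is continuous) with $\bar x=x^{k}$ and $p=x^{k+1}$; dividing the resulting inequality by $\tau_{k}$ and moving the linear part $\innp{K^{*}\bar{y}^{k+1},x-x^{k+1}}$ from the right-hand side to the inner product on the left yields the stated bound. Symmetrically, for the $y$-inequality, apply \cref{fact:proxsubdiff} with $\tilde h(\cdot):=\sigma_{k} h(\cdot)-\sigma_{k}\innp{K\bar{x}^{k},\cdot}$, $\bar x=y^{k}$, $p=y^{k+1}$, and divide by $\sigma_{k}$.

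No step is a genuine obstacle; the only care required is bookkeeping signs when turning the $\argmax$ in \cref{eq:PPalgorithm:1:y} into an $\argmin$ and when transferring $K$ to its adjoint $K^{*}$ in the $x$-step. Completing the square is routine in both cases because the quadratic term already has unit Hessian after multiplying by $\sigma_{k}$ or $\tau_{k}$.
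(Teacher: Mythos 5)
Your proposal is correct and follows essentially the same route as the paper: both arguments hinge on substituting \cref{eq:fspecialdefine} into the proximity steps, absorbing the bilinear coupling term (via $K^{*}$ for the $x$-step), and invoking \cref{fact:proxsubdiff}. The only cosmetic difference is that you complete the square in the objective and then apply \cref{fact:proxsubdiff} to the tilted function, whereas the paper runs the equivalent chain of variational inequalities directly, with the two displayed \emph{moreover} inequalities appearing as intermediate lines of that chain.
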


\begin{proof}
	Applying \cref{fact:proxsubdiff}, \cref{eq:PPalgorithm:2:y}, and the construction of $f$ 
defined in  \cref{eq:fspecialdefine}, we know that  
	\begin{align*}
			&(\forall y \in Y) \quad \innp{\frac{y^{k}-y^{k+1}}{\sigma_{k}}, y-y^{k+1}}	
		\leq - f (\bar{x}^{k}, y)+ f (\bar{x}^{k}, y^{k+1})  \\
			\Leftrightarrow &(\forall y \in Y) \quad \innp{\frac{y^{k}-y^{k+1}}{\sigma_{k}}, 
			y-y^{k+1}}	
			\leq  \innp{K\bar{x}^{k} , y^{k+1}-y} 
			-h(y^{k+1}) +h(y)\\
		\Leftrightarrow &(\forall y \in Y) \quad 
		\innp{\frac{y^{k}-y^{k+1}}{\sigma_{k}}+K\bar{x}^{k}, y-y^{k+1}}	
		\leq   -h(y^{k+1}) +h(y)\\
		\Leftrightarrow &(\forall y \in Y) \quad 
		\innp{ y^{k} +\sigma_{k} K\bar{x}^{k} -y^{k+1}, y-y^{k+1}}	
		\leq   -\sigma_{k}h(y^{k+1}) +\sigma_{k}h(y)\\
		    \Leftrightarrow~ & y^{k+1} =  \argmin_{y  \in Y}  \{ \sigma_{k}h(y) + \frac{1}{2} 
		    \norm{y -\left(y^{k} +\sigma_{k} K\bar{x}^{k}\right)}^{2} \}
		=\Prox^{Y}_{ \sigma_{k} h} (\sigma_{k} K\bar{x}^{k}  +  y^{k} ).
	\end{align*}
	Employing  \cref{fact:proxsubdiff}, \cref{eq:PPalgorithm:2:x}, and the definition of $f$ 
 in  \cref{eq:fspecialdefine}, we observe that 
	\begin{align*}
&(\forall x \in X) \quad \innp{ \frac{x^{k}-x^{k+1}}{\tau_{k}}, x-x^{k+1}} 
		 \leq f(x, \bar{y}^{k+1}) -f(x^{k+1}, \bar{y}^{k+1}) \\ 
\Leftrightarrow &(\forall x \in X) \quad \innp{ \frac{x^{k}-x^{k+1}}{\tau_{k}}, x-x^{k+1}} 
 \leq  \innp{x - x^{k+1}, K^{*} \bar{y}^{k+1} } +g(x) -g(x^{k+1})\\
\Leftrightarrow &(\forall x \in X) \quad \innp{ \frac{x^{k}-x^{k+1}}{\tau_{k}} -K^{*} 
\bar{y}^{k+1}, x-x^{k+1}} 
\leq   g(x) -g(x^{k+1})\\
\Leftrightarrow &(\forall x \in X) \quad \innp{  x^{k}  -\tau_{k} K^{*} \bar{y}^{k+1} -x^{k+1}, 
x-x^{k+1}} 
\leq   \tau_{k} g(x) - \tau_{k}g(x^{k+1})\\
	   \Leftrightarrow  & x^{k+1} =
		   \argmin_{x \in X}  \{ \tau_{k} g(x) + \frac{1}{2} \norm{x - \left( x^{k}  -\tau_{k} K^{*} 
		   \bar{y}^{k+1} \right)}^{2} \}
		   = \Prox^{X}_{\tau_{k}  g} ( -\tau_{k} K^{*} \bar{y}^{k+1}+ x^{k} ). 
	\end{align*}	
\end{proof}

Below, we rewrite the iterate scheme \cref{eq:lemma:algorithmsymplity} for some 
examples so 
that the resulted expressions have neither proximity mapping nor subgradient operator.
Of course, when we try to rewrite and simplify the iterate scheme
\cref{eq:lemma:algorithmsymplity} for particular examples, not every example has such 
explicit expression without proximity mapping or subgradient operator. 
\begin{example} \label{example:proximity} 
	\begin{enumerate}
		\item  \label{example:proximity:x2-y2} 
		Consider the convex-concave function $(\forall (x,y) \in \mathbf{R}^{2})$ $f(x,y) = 
		x^{2}-y^{2}$. In this case, $X=Y=\mathbf{R}$, $K=0$, $(\forall x \in \mathbf{R})$ 
		$g(x) =x^{2}$, and 
		$(\forall y \in \mathbf{R})$ $h(y)=y^{2}$.
	Bearing \cref{lemma:algorithmsymplity} and \cref{eq:PPalgorithm:2}  in mind, 
	we deduce that in this case,  
	the iterate scheme \cref{eq:lemma:algorithmsymplity} is nothing 
	but $(x^{0}, y^{0}) \in \mathbf{R}^{2}$ and for every $k \in \mathbf{N}$, 
\[ 
y^{k+1} = \frac{1}{1+2\sigma_{k}} y^{k} \quad \text{and} \quad x^{k+1} 
= \frac{1}{1+2\tau_{k}} x^{k}.
\]
		
\item \label{example:proximity:lp}  Consider the Lagrangian $f: \mathbf{R}^{n} \times 
\mathbf{R}^{m}_{+} \to \mathbf{R}$ of the inequality form LP, which is defined as
\[ 
(\forall (x,y) \in \mathbf{R}^{n} \times \mathbf{R}^{m}_{+}) \quad f(x,y)=	y^{T}Ax 
+c^{T}x   -b^{T}y,
\]
where $A \in \mathbf{R}^{m \times n}$, $b  \in \mathbf{R}^{m}$, and $c \in 
\mathbf{R}^{n}$. 
(See, e.g., \cite[Section~5.2.1]{BV2004} or \cite[Example~2.2(ii)]{Oy2023subgradient} 
for details.)
In this case, $X=\mathbf{R}^{n}$, $Y=\mathbf{R}^{m}_+$,  $K=A$, $(\forall x \in 
\mathbf{R}^{n})$ $g(x)=c^{T}x$, and 
$(\forall y \in \mathbf{R}^{m}_{+})$ $h(y) = b^{T}y$. 
Note that both $g$ and $h$ are differentiable and that their derivatives are constants 
that are independent of the variables. 
Taking \cref{lemma:algorithmsymplity} and \cref{eq:PPalgorithm:2} into account, 
we obtain that 
the iterate scheme \cref{eq:lemma:algorithmsymplity} becomes  that given 
$(x^{0}, y^{0}) \in \mathbf{R}^{n} \times \mathbf{R}^{m}_{+}$,   for every $k \in 
\mathbf{N}$, 
\begin{align*}
y^{k+1}  &= y^{k} +\sigma_{k} A\bar{x}^{k} - \sigma_{k}b;\\
 \bar{y}^{k+1} &=  y^{k+1} + \beta_{k} (y^{k+1} -y^{k});\\
x^{k+1}  &=  x^{k} -\tau_{k}A^{T}\bar{y}^{k+1} -c\tau_{k};\\
\bar{x}^{k+1}  &=  x^{k+1} + \alpha_{k} (x^{k+1} -x^{k}).
\end{align*}
\end{enumerate}
\end{example}

In the rest of this section, $((x^{k},y^{k}))_{k \in \mathbf{N}}$ is generated by the
iterate scheme \cref{eq:lemma:algorithmsymplity}; moreover
\begin{align}  \label{eq:hatx_ky_k}
(\forall k \in \mathbf{N}) 
~ \hat{x}_{k} := \frac{1}{k+1} \sum^{k}_{i=0} x^{i+1} 
=  \frac{1}{k+1} \sum^{k+1}_{i=1} x^{i}   
\text{ and }   \hat{y}_{k} := \frac{1}{k+1} \sum^{k}_{i=0} y^{i+1} 
=   \frac{1}{k+1} \sum^{k+1}_{i=1} y^{i}.
\end{align}

\begin{lemma}\label{lemma:clustersaddle-point}
Suppose that $\sup_{k \in \mathbf{N}} \alpha_{k} < +\infty$, 
$\sup_{k \in \mathbf{N}} \beta_{k} < +\infty$,  $0<\inf_{k \in \mathbf{N}} \sigma_{k}  $, 
and $0<\inf_{k \in \mathbf{N}} \tau_{k}  $. 
Suppose that 
$\lim_{k \to \infty} \norm{x^{k} -x^{k+1}} =0$ and $\lim_{k \to \infty} \norm{y^{k} -y^{k+1}} =0$. 
Then all cluster points of $((x^{k}, y^{k}))_{k \in \mathbf{N}}$ are saddle-points of $f$.
\end{lemma}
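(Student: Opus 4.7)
The plan is to verify that every cluster point $(\bar x, \bar y)$ of $((x^k, y^k))_{k \in \mathbf{N}}$ satisfies $f(\bar x, y) - f(x, \bar y) \leq 0$ for every $(x, y) \in X \times Y$, at which point \cref{fact:saddlepoint} finishes the argument. Fix such a cluster point and extract a subsequence $(x^{k_j}, y^{k_j}) \to (\bar x, \bar y)$. Using $\|x^{k+1} - x^k\| \to 0$, $\|y^{k+1} - y^k\| \to 0$, the boundedness of $(\alpha_k)$ and $(\beta_k)$, and the extrapolation formulas $\bar{x}^k = x^k + \alpha_{k-1}(x^k - x^{k-1})$ and $\bar{y}^{k+1} = y^{k+1} + \beta_k (y^{k+1} - y^k)$, I first check that $x^{k_j+1}$, $\bar{x}^{k_j}$, $\bar{x}^{k_j+1}$ all converge (strongly) to $\bar x$, and that the analogous $y$-iterates converge to $\bar y$.

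Next, for arbitrary $(x, y) \in X \times Y$ I add the two inequalities supplied at the end of \cref{lemma:algorithmsymplity}. After rearranging, this yields
\[
\innp{K\bar{x}^k, y - y^{k+1}} + \innp{K(x^{k+1} - x), \bar{y}^{k+1}} + g(x^{k+1}) - g(x) + h(y^{k+1}) - h(y) \;\leq\; -\innp{\tfrac{x^k - x^{k+1}}{\tau_k}, x - x^{k+1}} - \innp{\tfrac{y^k - y^{k+1}}{\sigma_k}, y - y^{k+1}}.
\]
Evaluated along $k = k_j$, the right-hand side tends to $0$: $\inf_k \tau_k > 0$ and $\inf_k \sigma_k > 0$ control the denominators, $\|x^{k+1} - x^k\| \to 0$ and $\|y^{k+1} - y^k\| \to 0$ control the numerators, and the remaining factors are bounded because of the convergences established in the first step.

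On the left-hand side I pass to $\liminf_j$. Continuity of $K$ together with the strong convergences give $\innp{K\bar{x}^{k_j}, y - y^{k_j+1}} \to \innp{K\bar x, y - \bar y}$ and $\innp{K(x^{k_j+1} - x), \bar{y}^{k_j+1}} \to \innp{K(\bar x - x), \bar y}$, whose sum equals $\innp{K\bar x, y} - \innp{Kx, \bar y}$. Lower semicontinuity of $g$ and $h$ yields $\liminf_j g(x^{k_j+1}) \geq g(\bar x)$ and $\liminf_j h(y^{k_j+1}) \geq h(\bar y)$. Collecting all pieces collapses the inequality to $f(\bar x, y) - f(x, \bar y) \leq 0$, and \cref{fact:saddlepoint} concludes that $(\bar x, \bar y)$ is a saddle-point of $f$.

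The main subtlety is the choice of which inequalities to combine. Starting from the $f$-based inequalities of \cref{lemma:xkykinequalities} would leave the possibly $+\infty$ quantities $g(\bar{x}^k)$ and $h(\bar{y}^{k+1})$ on both sides, and taking limits would then also require upper semicontinuity, which is not available. The reformulated inequalities in \cref{lemma:algorithmsymplity} are precisely those in which these terms have already canceled, so that only lower semicontinuity of $g$ and $h$ is needed; this is what makes the passage to the limit go through.
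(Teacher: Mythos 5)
Your proof is correct, but it follows a genuinely different route from the paper's. The paper reads off from \cref{lemma:algorithmsymplity:yk} and \cref{lemma:algorithmsymplity:xk} that the pairs $\bigl(y^{k_i+1}, K\bar{x}^{k_i} + \tfrac{y^{k_i}-y^{k_i+1}}{\sigma_{k_i}}\bigr)$ and $\bigl(x^{k_i+1}, -K^{*}\bar{y}^{k_i+1} + \tfrac{x^{k_i}-x^{k_i+1}}{\tau_{k_i}}\bigr)$ lie in $\gra \partial(h+\iota_{Y})$ and $\gra \partial(g+\iota_{X})$, passes to the limit using the sequential closedness of the graphs of these maximally monotone operators (citing \cite[Theorem~20.25 and Proposition~20.38]{BC2017}), and concludes via the subdifferential characterization \cref{fact:saddlepoint}\cref{fact:saddlepoint:partial}. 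You instead add the two variational inequalities at the end of \cref{lemma:algorithmsymplity}, send the residual terms to zero, and use only lower semicontinuity of $g$ and $h$ together with continuity of $K$ to land on the gap characterization \cref{fact:saddlepoint}\cref{fact:saddlepoint:leq}; in effect you reprove the relevant instance of graph-closedness by hand, in the spirit of \cref{lemma:clustersaddletkxy}. Your version is more self-contained (no appeal to maximal monotonicity), and your closing remark about why the reformulated inequalities of \cref{lemma:algorithmsymplity} — rather than those of \cref{lemma:xkykinequalities} — are the right starting point is a genuine insight that the paper leaves implicit. The paper's version is shorter given the cited machinery and makes the limiting inclusions $Kx^{*} \in \partial(h+\iota_{Y})(y^{*})$ and $-K^{*}y^{*} \in \partial(g+\iota_{X})(x^{*})$ explicit. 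Two small points you leave tacit: you should note that the cluster point lies in $X\times Y$ because $X\times Y$ is closed, and when collecting the liminf of the left-hand side you should restrict to $x \in \operatorname{dom} g$ and $y \in \operatorname{dom} h$ (the inequality of \cref{fact:saddlepoint}\cref{fact:saddlepoint:leq} being trivial otherwise) so that no $\infty-\infty$ ambiguity arises; neither affects the substance of the argument.
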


\begin{proof}
Let $(x^{*}, y^{*})$ be a cluster point of $((x^{k}, y^{k}))_{k \in \mathbf{N}}$. 
Because $((x^{k}, y^{k}))_{k \in \mathbf{N}}$ is in $X\times Y$ and 
$X\times Y$  is closed, we know that $(x^{*}, y^{*}) \in X\times Y$. 
Then there is a subsequence  $((x^{k_{i}}, y^{k_{i}}))_{i \in \mathbf{N}}$ of 
$((x^{k}, y^{k}))_{k \in \mathbf{N}}$ satisfying 
\[ 
x^{k_{i}} \to x^{*} \quad \text{and} \quad y^{k_{i}} \to y^{*}.
\]
Combine this with the assumptions $\sup_{k \in \mathbf{N}} \alpha_{k} < +\infty$, 
$\sup_{k \in \mathbf{N}} \beta_{k} < +\infty$, $0<\inf_{k \in \mathbf{N}} \sigma_{k}  $, 
$0<\inf_{k \in \mathbf{N}} \tau_{k}  $, $\lim_{k \to \infty} \norm{x^{k} -x^{k+1}} =0$, 
 $\lim_{k \to \infty} \norm{y^{k} -y^{k+1}} =0$, and the definitions 
\cref{eq:lemma:algorithmsymplity:barx} and \cref{eq:lemma:algorithmsymplity:bary} 
of $(\bar{x}^{k})_{k \in \mathbf{N}}$ and $(\bar{y}^{k})_{k \in \mathbf{N}}$ to 
deduce that for every $i \in \mathbf{N}$,
\begin{subequations}  \label{eq:lemma:clustersaddle-point:ki}
	\begin{align}
		&\left(  y^{k_{i}+1}, K\bar{x}^{k_{i}} 
		+ \frac{y^{k_{i}} -y^{k_{i}+1}}{\sigma_{k_{i}}}  \right) \to (y^{*}, Kx^{*}); \\
		&\left( x^{k_{i}+1}, - K^{*} \bar{y}^{k_{i}+1} 
		+ \frac{x^{k_{i}} -x^{k_{i}+1}}{\tau_{k_{i}}}  \right) \to (x^{*}, -K^{*}y^{*}).
	\end{align}
\end{subequations}

On the other hand, as a consequence of \cref{lemma:algorithmsymplity:yk} 
and \cref{lemma:algorithmsymplity:xk}, we know that for every $i \in \mathbf{N}$,
\begin{subequations}\label{eq:lemma:clustersaddle-point:graph}
	\begin{align} 
		&\left(  y^{k_{i}+1}, K\bar{x}^{k_{i}} 
		+ \frac{y^{k_{i}} -y^{k_{i}+1}}{\sigma_{k_{i}}}  \right)   \in \gra \partial \left( 
		h+\iota_{Y}\right);\\
		&\left( x^{k_{i}+1}, - K^{*} \bar{y}^{k_{i}+1} 
		+ \frac{x^{k_{i}} -x^{k_{i}+1}}{\tau_{k_{i}}}  \right)  \in \gra \partial  \left( 
		g+\iota_{X}\right).
	\end{align}
\end{subequations}
According to \cite[Theorem~20.25 and Proposition~20.38]{BC2017}, 
the graphs of the maximally monotone operators $\partial \left( h+\iota_{Y}\right)$ and 
$\partial  \left( g+\iota_{X}\right)$ 
are both sequentially closed, which, connecting with \cref{eq:lemma:clustersaddle-point:ki} and \cref{eq:lemma:clustersaddle-point:graph},  ensures that 
\[ 
(y^{*}, Kx^{*}) \in \gra \partial \left( h+\iota_{Y}\right)  \quad \text{and} \quad  (x^{*}, 
-K^{*}y^{*})   \in \gra \partial  \left( g+\iota_{X}\right).
\]
Recall that $(x^{*}, y^{*}) \in X\times Y$.  Note that 
$
	(y^{*}, Kx^{*}) \in \gra  \partial \left( h+\iota_{Y}\right) \Leftrightarrow Kx^{*} \in \partial 
	\left( h+\iota_{Y}\right) 
	(y^{*}) 
	\Leftrightarrow 0 \in  -Kx^{*} + \partial \left( h+\iota_{Y}\right) (y^{*})  =  
	\partial_{y}\left(-f +\iota_{Y} \right) (x^{*},y^{*})
$
and that 
$
	(x^{*}, -K^{*}y^{*})   \in \gra \partial  \left( g+\iota_{X}\right) \Leftrightarrow -K^{*}y^{*} 
	\in \partial  \left( g+\iota_{X}\right) (x^{*}) 
	\Leftrightarrow 0 \in  K^{*}y^{*} + \partial  \left( g+\iota_{X}\right) (x^{*}) = \partial_{x}    
	\left( f+\iota_{X}\right) (x^{*},y^{*}).
$
Combine results above with \cref{fact:saddlepoint} (\cref{fact:saddlepoint:sp} $\Leftrightarrow$ \cref{fact:saddlepoint:partial} in \cref{fact:saddlepoint})  
to conclude that $(x^{*},y^{*})$ is a saddle-point of $f$.
\end{proof}

\begin{lemma} \label{lemma:CSInequalities}
Let $(x,y) \in X \times Y$ and let $k \in \mathbf{N}$.  
The following assertions hold. 
	\begin{enumerate}
		\item    $\abs{\innp{K(x^{k}-x^{k-1}), y^{k}-y^{k+1}}} 
		\leq \frac{\norm{K}}{2}\norm{x^{k}-x^{k-1}}^{2} 
		+ \frac{\norm{K}}{2}\norm{y^{k}-y^{k+1}}^{2}$. 
		\item \label{lemma:CSInequalities:KK*}  
		$\abs{\innp{K(x^{k+1}-x^{k}), y^{k+1}-y}} 
		\leq \frac{1}{2} \frac{\norm{x^{k+1}-x^{k}}^{2}}{\tau_{k}} 
		+ \frac{\norm{K}^{2}\tau_{k}}{2}\norm{y^{k+1}-y}^{2}$.
		\item   $\abs{(1-\alpha_{k-1}) \innp{K(x^{k} -x^{k-1}), 
		y^{k+1}-y} } 
		\leq \frac{\norm{K}}{2} \norm{x^{k} -x^{k-1}}^{2} 
		+ \frac{\norm{K}}{2} \norm{(1-\alpha_{k-1}) (y^{k+1}-y)}^{2}$.
		\item    $\abs{\beta_{k}\innp{K(x^{k+1} -x), 
		y^{k+1}-y^{k}}} 
		\leq \frac{\norm{K}}{2} \norm{ \beta_{k} (x^{k+1} -x)}^{2} 
		+   \frac{\norm{K}}{2} \norm{ y^{k+1}-y^{k}}^{2}$.
	\end{enumerate}
\end{lemma}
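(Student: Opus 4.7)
The plan is to derive each of the four inequalities from the same pair of elementary estimates: the Cauchy--Schwarz inequality together with the operator-norm bound $\norm{Ku} \leq \norm{K}\,\norm{u}$ (equivalently $\norm{K^{*}v} \leq \norm{K}\,\norm{v}$), followed by an application of Young's inequality $ab \leq \frac{a^{2}}{2} + \frac{b^{2}}{2}$. For item \cref{lemma:CSInequalities:KK*} a weighted version $ab \leq \frac{a^{2}}{2t} + \frac{t b^{2}}{2}$ with $t = \tau_{k}$ is needed instead. No properties of the iterate scheme \cref{eq:lemma:algorithmsymplity} are invoked --- only linear algebra on the Hilbert spaces $\mathcal{H}_{1}, \mathcal{H}_{2}$ and the boundedness of $K$.

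For items \cref{lemma:CSInequalities:KK*}-free (that is (i), (iii), (iv)), I would use the uniform recipe: absorb any scalar weight ($1-\alpha_{k-1}$ in (iii), $\beta_{k}$ in (iv)) into one of the two vectors appearing in the inner product using bilinearity, then estimate by Cauchy--Schwarz and the operator-norm inequality to reach an expression of the form $\norm{K}\,\norm{u}\,\norm{v}$, and finally split symmetrically via Young's inequality as $\norm{K}\,\norm{u}\,\norm{v} \leq \frac{\norm{K}}{2}\norm{u}^{2} + \frac{\norm{K}}{2}\norm{v}^{2}$. For (iv), for instance, one writes $\beta_{k}\innp{K(x^{k+1}-x),\, y^{k+1}-y^{k}} = \innp{K(\beta_{k}(x^{k+1}-x)),\, y^{k+1}-y^{k}}$ before applying the bounds.

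For item \cref{lemma:CSInequalities:KK*}, Cauchy--Schwarz and the operator-norm bound give
\[
\abs{\innp{K(x^{k+1}-x^{k}),\, y^{k+1}-y}} \leq \norm{K}\,\norm{x^{k+1}-x^{k}}\,\norm{y^{k+1}-y}.
\]
I would then rewrite the right-hand side as the product $\frac{\norm{x^{k+1}-x^{k}}}{\sqrt{\tau_{k}}} \cdot \sqrt{\tau_{k}}\,\norm{K}\,\norm{y^{k+1}-y}$ and apply the standard Young inequality to the two resulting factors, which yields exactly $\frac{1}{2}\frac{\norm{x^{k+1}-x^{k}}^{2}}{\tau_{k}} + \frac{\norm{K}^{2}\tau_{k}}{2}\norm{y^{k+1}-y}^{2}$.

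Since each inequality is a one-line consequence of Cauchy--Schwarz, the operator-norm bound, and Young's inequality, there is no genuine obstacle. The only point requiring care is cosmetic bookkeeping: the scalars $1-\alpha_{k-1}$, $\beta_{k}$, and $\tau_{k}$ must be placed in precisely the positions demanded by the statement (inside a norm for (iii) and (iv), separating the $x$ and $y$ factors asymmetrically for (ii)), so that the resulting upper bounds match those later used in the convergence analysis of \cref{section:ConvergenceResults}.
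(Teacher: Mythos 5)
Your proposal is correct and follows essentially the same route as the paper: Cauchy--Schwarz together with the operator-norm bound, then the weighted Young inequality $2ab \leq \kappa a^{2} + b^{2}/\kappa$ (with $\kappa = 1/\tau_{k}$ for item (ii) and $\kappa = 1$ for the others). The only cosmetic difference is that the paper routes item (ii) through the adjoint $K^{*}$ and $\norm{K} = \norm{K^{*}}$, whereas you bound $\norm{K(x^{k+1}-x^{k})}$ directly; both give the same estimate.
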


\begin{proof}
Proofs of inequalities above use  the Cauchy-Schwarz inequality, 
\cref{eq:definenormK},  
and the fact that 
\[ 
(\forall (a,b) \in \mathbf{R}_{+}) (\forall \kappa \in \mathbf{R}_{++}) \quad 
2ab = 2 (\sqrt{\kappa}a ) \left(\frac{b}{\sqrt{\kappa}}\right) 
\leq \kappa a^{2} + \frac{b^{2}}{\kappa}.
\]
In addition, note that to prove \cref{lemma:CSInequalities:KK*}  above,
we employ the definition of the adjoint  $K^{*}$ of the continuous linear operator $K$
and also the fact $\norm{K} =\norm{K^{*}}$ from \cite[Fact~2.25(ii)]{BC2017}.
\end{proof}

\begin{lemma} \label{lemma:MultiplefIneq}
Let $(x,y) \in X \times Y$. The following statements hold. 
\begin{enumerate}
\item \label{lemma:MultiplefIneq:fK} We have that for every $k \in \mathbf{N}$,
\begin{align*}
&f(x^{k+1},y) -f(\bar{x}^{k},y) + f(\bar{x}^{k}, y^{k+1})   
-f(x,y^{k+1}) - f(x^{k+1}, \bar{y}^{k+1}) + f(x,\bar{y}^{k+1})\\
=&-\left( \innp{K(x^{k+1}-\bar{x}^{k}), y^{k+1}-y} 
-\innp{K(x^{k+1}-x), y^{k+1}-\bar{y}^{k+1}} \right).
\end{align*}
	\item \label{lemma:MultiplefIneq:sum} 
	Let $p \in \mathbf{N}$ and $k \in \mathbf{N}$ with $p \leq k$.
Then
	\begin{align*}
	& \sum^{k}_{i=p} \left(	f(x^{i+1},y) -f(\bar{x}^{i},y) + f(\bar{x}^{i}, y^{i+1})   
	-f(x,y^{i+1}) - f(x^{i+1}, \bar{y}^{i+1}) + f(x,\bar{y}^{i+1}) \right) \\
	=& -\innp{K(x^{k+1}-x^{k}), y^{k+1}-y} +\innp{K(x^{p}-x^{p-1}), y^{p}-y}
	- \sum^{k}_{i=p} \innp{K(x^{i} -x^{i-1}),y^{i}- y^{i+1}} \\
	&- \sum^{k}_{i=p} (1- \alpha_{i-1} )  \innp{K(x^{i} -x^{i-1}),y^{i+1}- y}  
	 -  \sum^{k}_{i=p}\beta_{i} \innp{K(x^{i+1}-x), y^{i+1} -y^{i}}.
	\end{align*}
\item \label{lemma:MultiplefIneq:sumIneq} Let $p \in \mathbf{N}$ and 
$k \in \mathbf{N}$ with $p \leq k$. Then
\begin{align*}
	& \sum^{k}_{i=p} \left(	f(x^{i+1},y) -f(\bar{x}^{i},y) + f(\bar{x}^{i}, y^{i+1})  
	 -f(x,y^{i+1}) - f(x^{i+1}, \bar{y}^{i+1}) + f(x,\bar{y}^{i+1}) \right) \\
	\leq &  \frac{1}{2} \frac{\norm{x^{k+1}-x^{k}}^{2}}{\tau_{k}} 
	+ \frac{\norm{K}^{2}\tau_{k}}{2}\norm{y^{k+1}-y}^{2}  
	+ \norm{K} \sum^{k}_{i=p} \norm{x^{i}-x^{i-1}}^{2} 
	+  \norm{K}  \sum^{k}_{i=p} \norm{y^{i}-y^{i+1}}^{2}\\
	&   +  \frac{\norm{K}}{2} \sum^{k}_{i=p} \norm{(1-\alpha_{i-1}) (y^{i+1}-y)}^{2}  
	+    \frac{\norm{K}}{2} \sum^{k}_{i=p} \norm{ \beta_{i} (x^{i+1} -x)}^{2} 
	 +\abs{\innp{K(x^{p}-x^{p-1}), y^{p}-y}}.
\end{align*}
Consequently, 
\begin{align*}
	 &\sum^{k}_{i=0} \left(f(x^{i+1},y) -f(\bar{x}^{i},y) + f(\bar{x}^{i}, y^{i+1})   
	 -f(x,y^{i+1}) - f(x^{i+1}, \bar{y}^{i+1}) + f(x,\bar{y}^{i+1}) \right) \\
	\leq &  \frac{1}{2} \frac{\norm{x^{k+1}-x^{k}}^{2}}{\tau_{k}} 
	+ \frac{\norm{K}^{2}\tau_{k}}{2}\norm{y^{k+1}-y}^{2}  
	+ \norm{K} \sum^{k}_{i=0} \norm{x^{i}-x^{i-1}}^{2} 
	+  \norm{K}  \sum^{k}_{i=0} \norm{y^{i}-y^{i+1}}^{2}\\
	&  +  \frac{\norm{K}}{2} \sum^{k}_{i=0} \norm{(1-\alpha_{i-1}) (y^{i+1}-y)}^{2}  
	+    \frac{\norm{K}}{2} \sum^{k}_{i=0} \norm{ \beta_{i} (x^{i+1} -x)}^{2}. 
\end{align*} 
	\end{enumerate}
\end{lemma}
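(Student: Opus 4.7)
My plan is to derive the three parts sequentially. Part~(i) is a pure algebraic identity that follows from \cref{lemma:fKuv}; part~(ii) is an index-bookkeeping exercise (substitution of the definitions of $\bar{x}^i$ and $\bar{y}^{i+1}$ followed by a telescoping); part~(iii) is then a routine application of \cref{lemma:CSInequalities}.

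For part~(i), I would match the six $f$-values on the left-hand side against the expression $F(u_1,v_1) - F(u_1,v_2) + F(u_2,v_2) - F(u_2,v_3) + F(u_3,v_3) - F(u_3,v_1)$ of \cref{lemma:fKuv} by choosing $u_1=\bar{x}^k$, $u_2=x^{k+1}$, $u_3=x$, $v_1=y^{k+1}$, $v_2=y$, $v_3=\bar{y}^{k+1}$. \cref{lemma:fKuv} then yields the value $\innp{K(\bar{x}^k-x^{k+1}), \bar{y}^{k+1}-y} - \innp{K(\bar{x}^k-x), \bar{y}^{k+1}-y^{k+1}}$, and splitting $\bar{y}^{k+1}-y = (\bar{y}^{k+1}-y^{k+1}) + (y^{k+1}-y)$ and $\bar{x}^k-x = (\bar{x}^k-x^{k+1}) + (x^{k+1}-x)$ in the appropriate slots collapses this to the claimed expression.

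For part~(ii), I would apply (i) to each $i \in \{p,\ldots,k\}$ and then substitute $\bar{x}^i = x^i + \alpha_{i-1}(x^i-x^{i-1})$ and $\bar{y}^{i+1} = y^{i+1} + \beta_i(y^{i+1}-y^i)$, so that each summand becomes
$$-\innp{K(x^{i+1}-x^i), y^{i+1}-y} + \alpha_{i-1}\innp{K(x^i-x^{i-1}), y^{i+1}-y} - \beta_i \innp{K(x^{i+1}-x), y^{i+1}-y^i}.$$
The $\beta_i$-term is already one of the sums in the conclusion. To isolate the $(1-\alpha_{i-1})$-sum I rewrite $\alpha_{i-1} = 1 - (1-\alpha_{i-1})$, which leaves
$$-\innp{K(x^{i+1}-x^i), y^{i+1}-y} + \innp{K(x^i-x^{i-1}), y^{i+1}-y}.$$
Decomposing $y^{i+1}-y = (y^{i+1}-y^i) + (y^i-y)$ in the second inner product, the two sums $\sum_{i=p}^k \innp{K(x^{i+1}-x^i), y^i-y}$ and $\sum_{i=p}^k \innp{K(x^i-x^{i-1}), y^i-y}$ differ by a one-step shift of index, so their difference telescopes to $-\innp{K(x^{k+1}-x^k), y^{k+1}-y} + \innp{K(x^p-x^{p-1}), y^p-y}$; the leftover $\sum_{i=p}^k \innp{K(x^i-x^{i-1}), y^{i+1}-y^i}$ equals $-\sum_{i=p}^k \innp{K(x^i-x^{i-1}), y^i-y^{i+1}}$, matching the statement.

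For part~(iii), I would bound each piece arising in (ii) by the appropriate Cauchy--Schwarz inequality from \cref{lemma:CSInequalities}: the isolated term $-\innp{K(x^{k+1}-x^k), y^{k+1}-y}$ via \cref{lemma:CSInequalities}\cref{lemma:CSInequalities:KK*}, the three $i$-indexed sums via items (i), (iii) and (iv) of that lemma respectively, and the boundary contribution $\innp{K(x^p-x^{p-1}), y^p-y}$ is simply retained inside an absolute value. The \emph{Consequently} statement is the specialization $p=0$: the initialization convention $x^{-1}=x^0$ forces $\innp{K(x^0-x^{-1}), y^0-y} = 0$, eliminating the boundary term while the sums harmlessly include the trivial $i=0$ contribution $\norm{x^0 - x^{-1}}^2 = 0$. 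The only mildly delicate step is the telescoping in~(ii); in particular, the rewriting $\alpha_{i-1} = 1 - (1-\alpha_{i-1})$ is what makes the two $x$-differences fuse into a clean telescoping sum, and everything else is routine expansion.
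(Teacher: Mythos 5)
Your proposal is correct and follows essentially the same route as the paper: part~(i) via \cref{lemma:fKuv} (you assign the $u_i,v_i$ slots slightly differently, so you need one extra splitting step where the paper factors out a minus sign first, but the algebra is identical), part~(ii) via the same substitution of \cref{eq:lemma:algorithmsymplity:barx}--\cref{eq:lemma:algorithmsymplity:bary}, the rewrite $\alpha_{i-1}=1-(1-\alpha_{i-1})$, and the same telescoping, and part~(iii) via the four bounds of \cref{lemma:CSInequalities} together with $x^{-1}=x^{0}$. No gaps.
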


\begin{proof}
\cref{lemma:MultiplefIneq:fK}: By some rearrangement, we get that 
\begin{align*}
	&f(x^{k+1},y) -f(\bar{x}^{k},y) + f(\bar{x}^{k}, y^{k+1})   
	-f(x,y^{k+1}) - f(x^{k+1}, \bar{y}^{k+1}) + f(x,\bar{y}^{k+1}) \\
	=& -\left( f(x^{k+1}, \bar{y}^{k+1}) -f(x^{k+1},y) 
	+ f(\bar{x}^{k},y) -f(\bar{x}^{k}, y^{k+1}) +f(x,y^{k+1})  -f(x,\bar{y}^{k+1}) \right).
\end{align*}  
Then apply \cref{lemma:fKuv} with $F=f$, $u_{1}=x^{k+1}$, 
$u_{2}=\bar{x}^{k}$, $u_{3} = x$, $v_{1} =\bar{y}^{k+1}$, $v_{2} =y$, 
and $v_{3} =y^{k+1}$ to derive the required equation in \cref{lemma:MultiplefIneq:fK}.

\cref{lemma:MultiplefIneq:sum}: 
In view of the result obtained in \cref{lemma:MultiplefIneq:fK}, 
\begin{subequations}\label{eq:lemma:MultiplefIneq:sum:ftoInnpK}
	\begin{align}
		&\sum^{k}_{i=p} \left(	f(x^{i+1},y) -f(\bar{x}^{i},y) 
		+ f(\bar{x}^{i}, y^{i+1})   -f(x,y^{i+1}) 
		- f(x^{i+1}, \bar{y}^{i+1}) + f(x,\bar{y}^{i+1}) \right) \\
		=&- \sum^{k}_{i=p} \left( \innp{K(x^{i+1}-\bar{x}^{i}), y^{i+1}-y} 
		-\innp{K(x^{i+1}-x), y^{i+1}-\bar{y}^{i+1}} \right).
	\end{align}
\end{subequations}
Let $i \in \{p,p+1, \ldots, k\}$. Recall from \cref{eq:lemma:algorithmsymplity:barx} 
and \cref{eq:lemma:algorithmsymplity:bary}   that 
$\bar{x}^{i} = x^{i} + \alpha_{i-1} (x^{i} -x^{i-1})$ 
and $\bar{y}^{i+1} = y^{i+1} + \beta_{i} (y^{i+1} -y^{i})$. Then 
\begin{align*}
	&\innp{K(x^{i+1}-\bar{x}^{i}), y^{i+1}-y} 
	-\innp{K(x^{i+1}-x), y^{i+1}-\bar{y}^{i+1}} \\
	=&  \innp{K(x^{i+1}-x^{i}), y^{i+1}-y}- \alpha_{i-1} \innp{K(x^{i} -x^{i-1}),y^{i+1}- y} 
	+  \beta_{i} \innp{K(x^{i+1}-x), y^{i+1} -y^{i}} \\
	=&   \innp{K(x^{i+1}-x^{i}), y^{i+1}-y} 
	-  \innp{K(x^{i} -x^{i-1}),y^{i}- y} +  \innp{K(x^{i} -x^{i-1}),y^{i}- y^{i+1}+ y^{i+1} - y} \\
	&- \alpha_{i-1} \innp{K(x^{i} -x^{i-1}),y^{i+1}- y} 
	+  \beta_{i} \innp{K(x^{i+1}-x), y^{i+1} -y^{i}} \\
	=&  \innp{K(x^{i+1}-x^{i}), y^{i+1}-y} -  \innp{K(x^{i} -x^{i-1}),y^{i}- y} 
	+  \innp{K(x^{i} -x^{i-1}),y^{i}- y^{i+1}} \\
	&+ (1- \alpha_{i-1} )  \innp{K(x^{i} -x^{i-1}),y^{i+1}- y} 
	+ \beta_{i} \innp{K(x^{i+1}-x), y^{i+1} -y^{i}}.
\end{align*}
Sum the expression above over $i$ from $p$ to $k$  to establish that 
\begin{align*}
	&\sum^{k}_{i=p} \left( \innp{K(x^{i+1}-\bar{x}^{i}), y^{i+1}-y} 
	-\innp{K(x^{i+1}-x), y^{i+1}-\bar{y}^{i+1}} \right)\\
	=  &\innp{K(x^{k+1}-x^{k}), y^{k+1}-y} -\innp{K(x^{p}-x^{p-1}), y^{p}-y} 
	+ \sum^{k}_{i=p} \innp{K(x^{i} -x^{i-1}),y^{i}- y^{i+1}}\\
	&+ \sum^{k}_{i=p} (1- \alpha_{i-1} )  \innp{K(x^{i} -x^{i-1}),y^{i+1}- y}  
	 +  \sum^{k}_{i=p}\beta_{i} \innp{K(x^{i+1}-x), y^{i+1} -y^{i}}.
\end{align*}
Altogether, we obtain the required equation in \cref{lemma:MultiplefIneq:sum}  easily. 

\cref{lemma:MultiplefIneq:sumIneq}: 
Combine the result obtained in \cref{lemma:MultiplefIneq:sum} 
above with the four inequalities in \cref{lemma:CSInequalities} 
to establish the first inequality in \cref{lemma:MultiplefIneq:sumIneq} which, 
connected with $x^{-1}=x^{0}$, deduces the last assertion. 
\end{proof}

\begin{corollary} \label{corollary:sumnormsumf}
Let $(x,y) \in X \times Y$. Let $p \in \mathbf{N}$ and $k \in \mathbf{N}$ with $p \leq k$. 
Then 
\begin{align*}
 &  \sum^{k}_{i=p} \frac{1}{2 \tau_{i}} \left(  \norm{x -x^{i}}^{2} -\norm{x -x^{i+1}}^{2} 
- \norm{x^{i} -x^{i+1}}^{2} \right)\\
& +     \sum^{k}_{i=p} \frac{1}{2 \sigma_{i}}  \left( \norm{y - y^{i}}^{2} - 
 \norm{y-y^{i+1}}^{2} - \norm{y^{i}-y^{i+1}}^{2} \right)\\
& +   \sum^{k}_{i=p} \left(f(x^{i+1},y) -f(\bar{x}^{i},y) + f(\bar{x}^{i}, y^{i+1})  
 -f(x,y^{i+1}) - f(x^{i+1}, \bar{y}^{i+1}) + f(x,\bar{y}^{i+1}) \right)\\
\leq & \frac{\norm{K}^{2}\tau_{k}}{2}\norm{y^{k+1}-y}^{2}+ \sum^{k}_{i=p} \frac{1}{2 
\tau_{i}} \norm{x -x^{i}}^{2} -\sum^{k}_{i=p}  \left(  \frac{1}{2 \tau_{i}}  - 
\frac{\norm{K}\beta^{2}_{i} }{2} \right) \norm{x -x^{i+1}}^{2} \\
&+ \sum^{k-1}_{i=p} \left( \norm{K} - \frac{1}{2 \tau_{i}}   \right)  
\norm{x^{i}-x^{i+1}}^{2} +   \sum^{k}_{i=p} \frac{1}{2 \sigma_{i}} \norm{y - y^{i}}^{2} \\
& -  \sum^{k}_{i=p}\left(  \frac{1}{2 \sigma_{i}} 
-  \frac{\norm{K}}{2} (1-\alpha_{i-1})^{2} \right) \norm{y-y^{i+1}}^{2} 
+  \sum^{k}_{i=p} \left(  \norm{K} - \frac{1}{2 \sigma_{i}}   \right)  \norm{y^{i}-y^{i+1}}^{2}\\
&+ \norm{K}\norm{x^{p-1} -x^{p}}^{2}  +\abs{\innp{K(x^{p}-x^{p-1}), y^{p}-y}}.
\end{align*}
\end{corollary}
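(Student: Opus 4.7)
The plan is to obtain this corollary as a direct algebraic consequence of \cref{lemma:MultiplefIneq}\cref{lemma:MultiplefIneq:sumIneq}, specifically of its \emph{first} inequality (the form with summation starting at $p$ and carrying the trailing boundary term $\abs{\innp{K(x^{p}-x^{p-1}), y^{p}-y}}$). The only step beyond citing that lemma is to add to both sides the two elementary telescope-type sums $\sum_{i=p}^{k}\tfrac{1}{2\tau_{i}}(\norm{x-x^{i}}^{2}-\norm{x-x^{i+1}}^{2}-\norm{x^{i}-x^{i+1}}^{2})$ and its $y,\sigma$-analogue. After this addition the left-hand side is exactly the expression appearing in the corollary, so the remaining work is to regroup the right-hand side into the displayed grouped form.

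The regrouping on the $x$-block proceeds in two pieces. First, $-\sum_{i=p}^{k}\tfrac{1}{2\tau_{i}}\norm{x-x^{i+1}}^{2}$ combines with the lemma's term $\tfrac{\norm{K}}{2}\sum_{i=p}^{k}\norm{\beta_{i}(x^{i+1}-x)}^{2}$ to yield exactly $-\sum_{i=p}^{k}(\tfrac{1}{2\tau_{i}}-\tfrac{\norm{K}\beta_{i}^{2}}{2})\norm{x-x^{i+1}}^{2}$. Second, after the index shift $j=i-1$, the lemma's term $\norm{K}\sum_{i=p}^{k}\norm{x^{i}-x^{i-1}}^{2}$ rewrites as $\norm{K}\norm{x^{p-1}-x^{p}}^{2}+\norm{K}\sum_{i=p}^{k-1}\norm{x^{i}-x^{i+1}}^{2}$; combining this with $-\sum_{i=p}^{k}\tfrac{1}{2\tau_{i}}\norm{x^{i}-x^{i+1}}^{2}$ isolates the $i=k$ piece $-\tfrac{1}{2\tau_{k}}\norm{x^{k}-x^{k+1}}^{2}$, which cancels the lemma's term $\tfrac{1}{2}\tfrac{\norm{x^{k+1}-x^{k}}^{2}}{\tau_{k}}$ exactly. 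What remains is precisely $\norm{K}\norm{x^{p-1}-x^{p}}^{2}+\sum_{i=p}^{k-1}(\norm{K}-\tfrac{1}{2\tau_{i}})\norm{x^{i}-x^{i+1}}^{2}$, matching the target.

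The $y$-block regrouping is more immediate because no index shift is required: $-\sum_{i=p}^{k}\tfrac{1}{2\sigma_{i}}\norm{y-y^{i+1}}^{2}$ combines with $\tfrac{\norm{K}}{2}\sum_{i=p}^{k}\norm{(1-\alpha_{i-1})(y^{i+1}-y)}^{2}$ to produce the $-(\tfrac{1}{2\sigma_i}-\tfrac{\norm{K}}{2}(1-\alpha_{i-1})^{2})\norm{y-y^{i+1}}^{2}$ line, and $-\sum_{i=p}^{k}\tfrac{1}{2\sigma_{i}}\norm{y^{i}-y^{i+1}}^{2}$ combines with $\norm{K}\sum_{i=p}^{k}\norm{y^{i}-y^{i+1}}^{2}$ to produce the $(\norm{K}-\tfrac{1}{2\sigma_{i}})\norm{y^{i}-y^{i+1}}^{2}$ line. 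The remaining terms $\tfrac{\norm{K}^{2}\tau_{k}}{2}\norm{y^{k+1}-y}^{2}$, $\sum_{i=p}^{k}\tfrac{1}{2\tau_{i}}\norm{x-x^{i}}^{2}$, $\sum_{i=p}^{k}\tfrac{1}{2\sigma_{i}}\norm{y-y^{i}}^{2}$ and $\abs{\innp{K(x^{p}-x^{p-1}), y^{p}-y}}$ transcribe unchanged. The only real source of friction is bookkeeping, namely the index shift in the $\sum\norm{x^{i}-x^{i-1}}^{2}$ term and the clean cancellation at $i=k$ (which also naturally handles the convention $x^{-1}=x^{0}$ in the boundary case $p=0$); no additional analytic input is needed beyond \cref{lemma:MultiplefIneq}\cref{lemma:MultiplefIneq:sumIneq}.
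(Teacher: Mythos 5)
Your proposal is correct and follows the same route as the paper: the paper likewise derives the corollary by adding the two telescope-type sums to Lemma \ref{lemma:MultiplefIneq}\cref{lemma:MultiplefIneq:sumIneq} (the version starting at $p$) and then regrouping, with the key algebraic step being exactly your index shift in $\norm{K}\sum_{i=p}^{k}\norm{x^{i}-x^{i-1}}^{2}$ and the cancellation of $-\tfrac{1}{2\tau_{k}}\norm{x^{k}-x^{k+1}}^{2}$ against $\tfrac{1}{2}\tfrac{\norm{x^{k+1}-x^{k}}^{2}}{\tau_{k}}$. Your write-up is just a more explicit version of the paper's one-line ``easy algebra'' identity, so no further comment is needed.
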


\begin{proof}
 Apply   some easy algebra to observe that 
\begin{align*}
&- \sum^{k}_{i=p} \frac{1}{2 \tau_{i}} \norm{x^{i} -x^{i+1}}^{2}  
+\frac{1}{2} \frac{\norm{x^{k+1}-x^{k}}^{2}}{\tau_{k}} 
+ \norm{K} \sum^{k}_{i=p} \norm{x^{i}-x^{i-1}}^{2} \\
 =&  \sum^{k-1}_{i=p} \left( \norm{K} - \frac{1}{2 \tau_{i}}   \right)  \norm{x^{i}-x^{i+1}}^{2} 
 + \norm{K}\norm{x^{p-1} -x^{p}}^{2}. 
	\end{align*}
Hence, it is clear that the desired inequality is an easy application of \cref{lemma:MultiplefIneq}\cref{lemma:MultiplefIneq:sumIneq}
\end{proof}	
	
\begin{corollary}\label{corollary:betaialphai-1Ineq}
Let $(x,y) \in X \times Y$.	Suppose that    $(\forall i \in \mathbf{N})$ 
$\norm{K} \beta^{2}_{i} = 	  \left( \frac{1}{ \tau_{i}} -\frac{1}{ \tau_{i+1}} \right)$
 and   $\norm{K} (1-\alpha_{i-1})^{2} = \left( \frac{1}{  \sigma_{i}} - \frac{1}{  \sigma_{i+1}} 
 \right)$. 
 Then for every $p \in \mathbf{N}$ and $k \in \mathbf{N}$ with $p \leq k$,
	\begin{align*}
		&  \sum^{k}_{i=p} \frac{1}{2 \tau_{i}} \left(  \norm{x -x^{i}}^{2} -\norm{x -x^{i+1}}^{2} 
		- 
		\norm{x^{i} -x^{i+1}}^{2} \right)\\
		& +     \sum^{k}_{i=p} \frac{1}{2 \sigma_{i}}  \left( \norm{y - y^{i}}^{2} - 
		\norm{y-y^{i+1}}^{2} - \norm{y^{i}-y^{i+1}}^{2} \right)\\
		& +   \sum^{k}_{i=p} \left(f(x^{i+1},y) -f(\bar{x}^{i},y) + f(\bar{x}^{i}, y^{i+1})  
		 -f(x,y^{i+1}) - f(x^{i+1}, \bar{y}^{i+1}) + f(x,\bar{y}^{i+1}) \right)\\
		\leq & \frac{1}{2 \tau_{p}} \norm{x -x^{p}}^{2} 
		-  \frac{1}{2 \tau_{k+1}} \norm{x -x^{k+1}}^{2} 
		+  \frac{1}{2 \sigma_{p}} \norm{y -y^{p}}^{2} - \left( \frac{1}{2 \sigma_{k+1}}  
		- \frac{\norm{K}^{2}\tau_{k}}{2}  \right)  \norm{y -y^{k+1}}^{2}\\
		& + \sum^{k-1}_{i=p} \left( \norm{K} - \frac{1}{2 \tau_{i}}   \right)  \norm{x^{i}-x^{i+1}}^{2} 
		+  \sum^{k}_{i=p} \left(  \norm{K} - \frac{1}{2 \sigma_{i}}   \right)  \norm{y^{i}-y^{i+1}}^{2}\\
		&+  \norm{K}\norm{x^{p-1} -x^{p}}^{2}  
		+\abs{\innp{K(x^{p}-x^{p-1}), y^{p}-y}}.
	\end{align*}
\end{corollary}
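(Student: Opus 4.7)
The plan is to show that Corollary \ref{corollary:betaialphai-1Ineq} is essentially an immediate consequence of Corollary \ref{corollary:sumnormsumf}: once the two balancing assumptions on $(\alpha_{i-1})$ and $(\beta_i)$ are plugged in, the sums over $i$ collapse to boundary terms by telescoping.

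First I would take as starting point the inequality delivered by Corollary \ref{corollary:sumnormsumf} applied to the same $(x,y) \in X\times Y$ and the same indices $p \leq k$. The left-hand side there is already exactly the left-hand side of what we want, so the only work is on the right-hand side. The two critical coefficients to simplify are $\frac{1}{2\tau_i} - \frac{\norm{K}\beta_i^2}{2}$ (attached to $\norm{x-x^{i+1}}^2$) and $\frac{1}{2\sigma_i} - \frac{\norm{K}(1-\alpha_{i-1})^2}{2}$ (attached to $\norm{y-y^{i+1}}^2$). Using $\norm{K}\beta_i^2 = \tfrac{1}{\tau_i} - \tfrac{1}{\tau_{i+1}}$ the first reduces to $\tfrac{1}{2\tau_{i+1}}$, and using $\norm{K}(1-\alpha_{i-1})^2 = \tfrac{1}{\sigma_i} - \tfrac{1}{\sigma_{i+1}}$ the second reduces to $\tfrac{1}{2\sigma_{i+1}}$.

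Next I would carry out the telescoping. After substitution, the $x$-contribution on the right becomes
\[
\sum_{i=p}^{k} \frac{1}{2\tau_i}\norm{x-x^i}^2 \;-\; \sum_{i=p}^{k} \frac{1}{2\tau_{i+1}}\norm{x-x^{i+1}}^2 \;=\; \frac{1}{2\tau_p}\norm{x-x^p}^2 - \frac{1}{2\tau_{k+1}}\norm{x-x^{k+1}}^2,
\]
by a shift-of-index in the second sum. The analogous manipulation for the $y$-contribution yields $\frac{1}{2\sigma_p}\norm{y-y^p}^2 - \frac{1}{2\sigma_{k+1}}\norm{y-y^{k+1}}^2$. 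Finally, the leftover term $\frac{\norm{K}^2 \tau_k}{2}\norm{y^{k+1}-y}^2$ from Corollary \ref{corollary:sumnormsumf} combines with $-\frac{1}{2\sigma_{k+1}}\norm{y-y^{k+1}}^2$ to produce the stated coefficient $-\left(\frac{1}{2\sigma_{k+1}} - \frac{\norm{K}^2 \tau_k}{2}\right)\norm{y-y^{k+1}}^2$. The two residual inner sums with coefficients $\norm{K} - \frac{1}{2\tau_i}$ and $\norm{K} - \frac{1}{2\sigma_i}$, as well as the boundary terms $\norm{K}\norm{x^{p-1}-x^p}^2$ and $\abs{\innp{K(x^p-x^{p-1}),y^p-y}}$, are unaffected and carry over verbatim.

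The whole argument is really just bookkeeping; there is no genuine obstacle. The only point that requires a little care is making sure the shifted sum $\sum_{i=p}^{k}\tfrac{1}{2\tau_{i+1}}\norm{x-x^{i+1}}^2$ lines up with $\sum_{j=p+1}^{k+1}\tfrac{1}{2\tau_{j}}\norm{x-x^{j}}^2$, so that the difference of the two telescoping series produces exactly the endpoint terms at $i=p$ and $i=k+1$ (and not a spurious middle term), and similarly for the $\sigma$-series. Once that index bookkeeping is in place, the statement of Corollary \ref{corollary:betaialphai-1Ineq} is identical to what one reads off of Corollary \ref{corollary:sumnormsumf}.
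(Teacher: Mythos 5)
Your proposal is correct and follows exactly the paper's own route: the paper likewise rewrites the assumptions as $\frac{1}{\tau_{i+1}}=\frac{1}{\tau_i}-\norm{K}\beta_i^2$ and $\frac{1}{\sigma_{i+1}}=\frac{1}{\sigma_i}-\norm{K}(1-\alpha_{i-1})^2$ and then feeds them into Corollary \ref{corollary:sumnormsumf}, with the telescoping you describe doing the rest. Your write-up simply makes explicit the index bookkeeping that the paper leaves implicit.
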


\begin{proof}
Based on our assumptions, it is easy to see that 
\[ 
(\forall i \in \mathbf{N}) \quad	\frac{1}{ \tau_{i+1}}   
=	\frac{1}{ \tau_{i}}  -  \norm{K}\beta^{2}_{i}  
\quad \text{and} \quad   \frac{1}{  \sigma_{i+1}} 
=  \frac{1}{  \sigma_{i}} -  \norm{K}  (1-\alpha_{i-1})^{2}.
\]
This connected  with \cref{corollary:sumnormsumf} deduces the required inequality. 
\end{proof}

\begin{corollary} \label{corollary:Ineqxy}
	Let $(x,y) \in X \times Y$.	Suppose that    
	$(\forall i \in \mathbf{N})$
	 $\norm{K} \beta^{2}_{i} = 	  \left( \frac{1}{ \tau_{i}} -\frac{1}{ \tau_{i+1}} \right)$ 
	 and    $\norm{K} (1-\alpha_{i-1})^{2} =   \left( \frac{1}{  \sigma_{i}} - \frac{1}{  
	 \sigma_{i+1}} \right)$. 
	   Let $p \in \mathbf{N}$ and and let $k \in \mathbf{N}$ with $p \leq k$.
	   Then
	\begin{align*}
		&\frac{1}{2 \tau_{p}} \norm{x -x^{p}}^{2} 
		+\frac{1}{2 \sigma_{p}} \norm{y -y^{p}}^{2}  \\
		\geq & \sum^{k}_{i=p} \left( f(x^{i+1},y)-f(x,y^{i+1}) \right)
		+ \frac{1}{2 \tau_{k+1}} \norm{x -x^{k+1}}^{2} 
		+ \left( \frac{1}{2 \sigma_{k+1}}  
		- \frac{\norm{K}^{2}\tau_{k}}{2}  \right)  \norm{y -y^{k+1}}^{2} \\
		&-\sum^{k-1}_{i=p} \left( \norm{K} - \frac{1}{2 \tau_{i}}   \right) 
		 \norm{x^{i}-x^{i+1}}^{2} -  \sum^{k}_{i=p} \left(  \norm{K} - \frac{1}{2 \sigma_{i}}   \right)  
		 \norm{y^{i}-y^{i+1}}^{2}\\
		&- \norm{K}\norm{x^{p-1} -x^{p}}^{2}  -\abs{\innp{K(x^{p}-x^{p-1}), y^{p}-y}}.
	\end{align*}
\end{corollary}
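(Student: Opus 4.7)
The plan is to deduce \cref{corollary:Ineqxy} as an algebraic rearrangement of \cref{corollary:betaialphai-1Ineq}, after showing that a certain combination of terms already on the left-hand side of \cref{corollary:betaialphai-1Ineq} is automatically nonnegative, so that it may be dropped without weakening the inequality.

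First I would decompose the six-term $f$-sum on the left-hand side of \cref{corollary:betaialphai-1Ineq} as
\[
\sum_{i=p}^{k}\bigl[f(x^{i+1},y)-f(x,y^{i+1})\bigr]\;+\;\sum_{i=p}^{k}\bigl[-f(\bar{x}^{i},y)+f(\bar{x}^{i},y^{i+1})-f(x^{i+1},\bar{y}^{i+1})+f(x,\bar{y}^{i+1})\bigr].
\]
The first sum is precisely the primal-dual contribution appearing in \cref{corollary:Ineqxy}, while the second sum I will absorb. Next I would apply \cref{lemma:xkykinequalities}\cref{lemma:xkykinequalities:normy} and \cref{lemma:xkykinequalities}\cref{lemma:xkykinequalities:normx} at each $i\in\{p,p+1,\ldots,k\}$ to obtain
\[
f(\bar{x}^{i},y)-f(\bar{x}^{i},y^{i+1})\leq\frac{1}{2\sigma_{i}}\bigl(\norm{y-y^{i}}^{2}-\norm{y-y^{i+1}}^{2}-\norm{y^{i}-y^{i+1}}^{2}\bigr),
\]
\[
f(x^{i+1},\bar{y}^{i+1})-f(x,\bar{y}^{i+1})\leq\frac{1}{2\tau_{i}}\bigl(\norm{x-x^{i}}^{2}-\norm{x-x^{i+1}}^{2}-\norm{x^{i}-x^{i+1}}^{2}\bigr).
\]
Summing these bounds over $i$ from $p$ to $k$ and rearranging would show that the two telescoping norm-difference sums on the left-hand side of \cref{corollary:betaialphai-1Ineq}, taken together with the second ("remainder") $f$-sum above, form a nonnegative quantity.

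Finally, since \cref{corollary:betaialphai-1Ineq} bounds (telescoping sums)\,$+$\,(primal-dual sum)\,$+$\,(remainder sum) from above by some right-hand side $R$, and since (telescoping sums)\,$+$\,(remainder sum)$\geq 0$ by the previous step, the primal-dual sum alone is bounded above by the same $R$. Moving the term $\frac{1}{2\tau_{p}}\norm{x-x^{p}}^{2}+\frac{1}{2\sigma_{p}}\norm{y-y^{p}}^{2}$ of $R$ to the opposite side and flipping the signs of the remaining terms of $R$ then reproduces \cref{corollary:Ineqxy} verbatim. The main obstacle is purely bookkeeping: checking that every term on the right-hand side of \cref{corollary:betaialphai-1Ineq} lands in its correct position with the correct sign in \cref{corollary:Ineqxy}. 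The mathematical content is the single observation that \cref{lemma:xkykinequalities} already absorbs the remainder $f$-contribution into the telescoping norm terms, so dropping these from the left-hand side costs nothing.
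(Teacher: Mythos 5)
Your proposal is correct and follows essentially the same route as the paper: the paper likewise starts from \cref{lemma:xkykinequalities}\cref{lemma:xkykinequalities:normy}$\&$\cref{lemma:xkykinequalities:normx}, adds and subtracts $f(x^{i+1},y)-f(x,y^{i+1})$ to show that the primal-dual sum is dominated by the left-hand side of \cref{corollary:betaialphai-1Ineq}, and then invokes that corollary and rearranges. Your reorganization (showing the telescoping norm sums plus the remainder $f$-sum are jointly nonnegative, then dropping them) is the same computation read in the opposite direction.
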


\begin{proof}
Applying  \cref{lemma:xkykinequalities}\cref{lemma:xkykinequalities:normy}  
and \cref{lemma:xkykinequalities}\cref{lemma:xkykinequalities:normx} in the   inequality below, 
we have that   for every $i \in \mathbf{N}$, 
\begin{align*}
0 \geq &	\frac{1}{2 \sigma_{i}}\left( \norm{y^{i} -y^{i+1}}^{2} 
+\norm{y-y^{i+1}}^{2} -\norm{y - y^{i}}^{2} \right) 
+\frac{1}{2 \tau_{i}} \left( \norm{x^{i} -x^{i+1}}^{2} 
+ \norm{x - x^{i+1}}^{2} -\norm{x-x^{i}}^{2} \right)\\
		& + f(\bar{x}^{i},y) - f (\bar{x}^{i}, y^{i+1})  
		+ f(x^{i+1}, \bar{y}^{i+1}) - f(x, \bar{y}^{i+1})     \\
		= &f(x^{i+1},y)-f(x,y^{i+1})\\
		 &+ \frac{1}{2 \sigma_{i}}\left( \norm{y^{i} -y^{i+1}}^{2} 
		 +\norm{y-y^{i+1}}^{2} -\norm{y - y^{i}}^{2} \right) 
		 +\frac{1}{2 \tau_{i}} \left( \norm{x^{i} -x^{i+1}}^{2} 
		 + \norm{x - x^{i+1}}^{2} -\norm{x-x^{i}}^{2} \right)\\
		& -f(x^{i+1},y)+f(x,y^{i+1}) + f(\bar{x}^{i},y) 
		- f (\bar{x}^{i}, y^{i+1})  + f(x^{i+1}, \bar{y}^{i+1}) - f(x, \bar{y}^{i+1}).     
	\end{align*}  
Sum the inequality over $i$ from $p$ to $k$ and 
employ \cref{corollary:betaialphai-1Ineq} to establish that 
\begin{align*}
&\sum^{k}_{i=p} f(x^{i+1},y)-f(x,y^{i+1})\\
\leq &\sum^{k}_{i=p} \frac{1}{2 \tau_{i}} \left(  \norm{x -x^{i}}^{2} -\norm{x -x^{i+1}}^{2}
 - \norm{x^{i} -x^{i+1}}^{2} \right)\\
& +     \sum^{k}_{i=p} \frac{1}{2 \sigma_{i}}  \left( \norm{y - y^{i}}^{2} - 
\norm{y-y^{i+1}}^{2} - \norm{y^{i}-y^{i+1}}^{2} \right)\\
& +   \sum^{k}_{i=p} \left(f(x^{i+1},y) -f(\bar{x}^{i},y) 
+ f(\bar{x}^{i}, y^{i+1})   -f(x,y^{i+1}) - f(x^{i+1}, \bar{y}^{i+1}) + f(x,\bar{y}^{i+1}) \right)\\
\leq & \frac{1}{2 \tau_{p}} \norm{x -x^{p}}^{2} 
-  \frac{1}{2 \tau_{k+1}} \norm{x -x^{k+1}}^{2} 
+  \frac{1}{2 \sigma_{p}} \norm{y -y^{p}}^{2} 
- \left( \frac{1}{2 \sigma_{k+1}}  -
 \frac{\norm{K}^{2}\tau_{k}}{2}  \right)  \norm{y -y^{k+1}}^{2}\\
& + \sum^{k-1}_{i=p} \left( \norm{K} - \frac{1}{2 \tau_{i}}   \right)  \norm{x^{i}-x^{i+1}}^{2} 
+  \sum^{k}_{i=p} \left(  \norm{K} - \frac{1}{2 \sigma_{i}}   \right)  \norm{y^{i}-y^{i+1}}^{2}\\
&+  \norm{K}\norm{x^{p-1} -x^{p}}^{2}  +\abs{\innp{K(x^{p}-x^{p-1}), y^{p}-y}}.
\end{align*} 
After some rearrangement, we obtain the required inequality.
\end{proof}

\begin{lemma} \label{lemma:f-fbounded}
Let $(x,y) \in X \times Y$.
Suppose that    $(\forall i \in \mathbf{N})$ $\norm{K} \beta^{2}_{i} 
= \left( \frac{1}{ \tau_{i}} -\frac{1}{ \tau_{i+1}} \right)$ 
and  $\norm{K} (1-\alpha_{i-1})^{2} =   \left( \frac{1}{  \sigma_{i}} - \frac{1}{  \sigma_{i+1}} 
\right)$. 
Then for every $k \in \mathbf{N}$,
\begin{align*}
&f(\hat{x}_{k}, y)   -f(x, \hat{y}_{k})  \\
\leq &\frac{1}{2  (k+1)} \left( \frac{\norm{x -x^{0}}^{2} }{\tau_{0}} 
-  \frac{\norm{x -x^{k+1}}^{2}}{\tau_{k+1}} 
 +  \frac{ \norm{y -y^{0}}^{2}}{ \sigma_{0} }  \right) 
 -\frac{1}{k+1} \left( \frac{1}{2 \sigma_{k+1}}  
 - \frac{\norm{K}^{2}\tau_{k}}{2}  \right)  \norm{y -y^{k+1}}^{2}\\ 
 &+\frac{1}{k+1} \sum^{k-1}_{i=0} \left( \norm{K} 
 - \frac{1}{2 \tau_{i}}   \right)  \norm{x^{i}-x^{i+1}}^{2} 
 +  \frac{1}{k+1} \sum^{k}_{i=0} \left(  \norm{K} - \frac{1}{2 \sigma_{i}}   \right) 
 \norm{y^{i}-y^{i+1}}^{2}.
	\end{align*} 
\end{lemma}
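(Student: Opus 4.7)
The plan is to derive the claimed bound by specializing Corollary \ref{corollary:Ineqxy} to $p = 0$, exploiting the initialization convention $x^{-1} = x^0$, $y^{-1} = y^0$, and then averaging via Jensen's inequality to pass from the iterates $(x^{i+1}, y^{i+1})$ to the ergodic averages $(\hat{x}_k, \hat{y}_k)$.

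First I would invoke Corollary \ref{corollary:Ineqxy} with $p = 0$. The initialization $x^{-1} = x^0$ makes the two boundary terms vanish, since
\[
\norm{K}\norm{x^{-1} - x^{0}}^{2} = 0
\quad \text{and} \quad
\abs{\innp{K(x^{0}-x^{-1}), y^{0}-y}} = 0.
\]
Thus the corollary yields, after trivial rearrangement,
\begin{align*}
\sum^{k}_{i=0} \bigl( f(x^{i+1},y) - f(x,y^{i+1}) \bigr)
\leq{}& \frac{1}{2 \tau_{0}} \norm{x -x^{0}}^{2} - \frac{1}{2 \tau_{k+1}} \norm{x -x^{k+1}}^{2} + \frac{1}{2 \sigma_{0}} \norm{y -y^{0}}^{2} \\
&- \left( \frac{1}{2 \sigma_{k+1}} - \frac{\norm{K}^{2}\tau_{k}}{2} \right) \norm{y -y^{k+1}}^{2} \\
&+ \sum^{k-1}_{i=0} \left( \norm{K} - \frac{1}{2 \tau_{i}} \right) \norm{x^{i}-x^{i+1}}^{2} + \sum^{k}_{i=0} \left( \norm{K} - \frac{1}{2 \sigma_{i}} \right) \norm{y^{i}-y^{i+1}}^{2}.
\end{align*}

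Next I would divide both sides by $k+1$ and use the convexity/concavity of $f$ already recorded in \cref{eq:lemma:fsumxkykineq:xk} and \cref{eq:lemma:fsumxkykineq:yk}, namely
\[
\frac{1}{k+1}\sum_{i=0}^{k} f(x^{i+1}, y) \geq f(\hat{x}_k, y)
\quad \text{and} \quad
\frac{1}{k+1}\sum_{i=0}^{k} f(x, y^{i+1}) \leq f(x, \hat{y}_k),
\]
so that
\[
f(\hat{x}_k, y) - f(x, \hat{y}_k) \leq \frac{1}{k+1}\sum_{i=0}^{k} \bigl( f(x^{i+1}, y) - f(x, y^{i+1}) \bigr).
\]
Combining this with the displayed inequality above yields precisely the asserted bound.

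There is no real obstacle here, since the hard work has already been done in assembling Corollary \ref{corollary:Ineqxy} — that is where the cross-terms $\innp{K(x^{i+1}-x^i), y^{i+1}-y}$ from \cref{lemma:MultiplefIneq} are controlled using the telescoping structure induced by the parameter recursions $\norm{K}\beta_i^2 = 1/\tau_i - 1/\tau_{i+1}$ and $\norm{K}(1-\alpha_{i-1})^2 = 1/\sigma_i - 1/\sigma_{i+1}$. The only delicate point in the present lemma is ensuring the boundary terms disappear cleanly at $p = 0$, which the choice $x^{-1} = x^0$, $y^{-1} = y^0$ is precisely designed to guarantee; everything else is averaging and a single application of Jensen's inequality.
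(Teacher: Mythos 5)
Your proof is correct and is in substance the same argument as the paper's: the paper obtains the bound by combining \cref{lemma:fsumxkykineq}\cref{lemma:fsumxkykineq:f} (which already contains the Jensen step) with \cref{corollary:betaialphai-1Ineq} at $p=0$, whereas you route the same ingredients through \cref{corollary:Ineqxy} at $p=0$ and apply Jensen afterwards; since \cref{corollary:Ineqxy} is itself derived from \cref{corollary:betaialphai-1Ineq}, the two derivations coincide up to the order in which the averaging and the telescoping bound are applied. Your observation that the boundary terms vanish because $x^{-1}=x^{0}$ is exactly the point the paper also flags at the start of its proof.
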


\begin{proof}
 Recall that $x^{-1}=x^{0}$. Take  
 \cref{lemma:fsumxkykineq}\cref{lemma:fsumxkykineq:f}  and 
 \cref{corollary:betaialphai-1Ineq} into account to derive that for every $k \in 
 \mathbf{N}$,
 \begin{align*}
 &f(\hat{x}_{k}, y)   -f(x, \hat{y}_{k}) \\
 	\leq & \frac{1}{k+1}  \sum^{k}_{i=0} \frac{1}{2 \sigma_{i}}  \left( \norm{y- y^{i}}^{2} 
	- \norm{y-y^{i+1}}^{2} - \norm{y^{i}-y^{i+1}}^{2} \right)\\
 	& +\frac{1}{k+1}  \sum^{k}_{i=0} \frac{1}{2 \tau_{i}} \left(  \norm{x -x^{i}}^{2} 
	-\norm{x -x^{i+1}}^{2} - \norm{x^{i} -x^{i+1}}^{2} \right) \\
 	&+  \frac{1}{k+1} \sum^{k}_{i=0} \left(f(x^{i+1},y) -f(\bar{x}^{i},y) 
	+ f(\bar{x}^{i}, y^{i+1}) -f(x,y^{i+1}) - f(x^{i+1}, \bar{y}^{i+1}) 
	+ f(x,\bar{y}^{i+1})  \right)\\
 	\leq & \frac{1}{2  (k+1)} \left( \frac{\norm{x -x^{0}}^{2} }{\tau_{0}} 
	-  \frac{\norm{x -x^{k+1}}^{2}}{\tau_{k+1}}  
	+  \frac{ \norm{y -y^{0}}^{2}}{ \sigma_{0} }  \right) 
	-\frac{1}{k+1} \left( \frac{1}{2 \sigma_{k+1}}  
	- \frac{\norm{K}^{2}\tau_{k}}{2}  \right)  \norm{y -y^{k+1}}^{2}\\
 	& + \frac{1}{k+1} \sum^{k-1}_{i=0} \left( \norm{K} 
	- \frac{1}{2 \tau_{i}}   \right)  \norm{x^{i}-x^{i+1}}^{2} 
	+  \frac{1}{k+1} \sum^{k}_{i=0} \left(  \norm{K} - \frac{1}{2 \sigma_{i}}   \right)  
	\norm{y^{i}-y^{i+1}}^{2}.
 \end{align*}
\end{proof}

The following easy result will be used several times later. 
\begin{lemma}\label{lemma:infsup}
Suppose that $\limsup_{i \to \infty} \tau_{i}  < \infty$, 
$\limsup_{i \to \infty} \sigma_{i} < \infty$,
 	$\norm{K} \limsup_{i \to \infty} \tau_{i} < \frac{1}{2}$, 
	and $\norm{K} \limsup_{i \to \infty} 
 	\sigma_{i} < \frac{1}{2}$.
	Then there exist $N \in \mathbf{N}$,  $\bar{\tau} \in \mathbf{R}_{++}$, 
	and $\bar{\sigma} \in \mathbf{R}_{++}$ such that for every  $k \in \mathbf{N}$ with 
	$k \geq N$,
	\begin{subequations} \label{eq:lemma:infsup}
\begin{align} 
&\frac{1}{2 \tau_{k+1}} \geq  \frac{1}{2 \bar{\tau}}  >0;\\
& \frac{1}{2 \sigma_{k+1}}  - \frac{\norm{K}^{2}\tau_{k}}{2}   
\geq  \frac{1 }{2 \bar{\sigma}} - \frac{\norm{K}^{2} \bar{\tau}}{2}  >0;\\
& \frac{1}{2 \tau_{k}} -\norm{K}  \geq \frac{1}{2 \bar{\tau}} -\norm{K}   >0;\\
&\frac{1}{2 \sigma_{k}} - \norm{K}  \geq \frac{1}{2 \bar{\sigma} }- \norm{K}>0.
\end{align}
\end{subequations}
\end{lemma}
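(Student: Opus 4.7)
The plan is to pick $\bar{\tau}$ and $\bar{\sigma}$ as fixed upper bounds that sit strictly between the $\limsup$'s of the step-size sequences and the thresholds dictated by $\norm{K}$, and then invoke the definition of $\limsup$ to make $\tau_k \leq \bar{\tau}$ and $\sigma_k \leq \bar{\sigma}$ for all $k$ past some index $N$. Concretely, set $\tau^{*} := \limsup_{i \to \infty} \tau_i$ and $\sigma^{*} := \limsup_{i \to \infty} \sigma_i$; by hypothesis both are finite with $\norm{K}\tau^{*} < \tfrac{1}{2}$ and $\norm{K}\sigma^{*} < \tfrac{1}{2}$. I would then choose $\bar{\tau},\bar{\sigma} \in \mathbf{R}_{++}$ with $\tau^{*} < \bar{\tau}$, $\sigma^{*} < \bar{\sigma}$, and still $\norm{K}\bar{\tau} < \tfrac{1}{2}$, $\norm{K}\bar{\sigma} < \tfrac{1}{2}$ (if $\norm{K} = 0$, any $\bar{\tau} > \tau^{*}$ and $\bar{\sigma} > \sigma^{*}$ suffice). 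The definition of $\limsup$ then furnishes $N \in \mathbf{N}$ such that $\tau_k \leq \bar{\tau}$ and $\sigma_k \leq \bar{\sigma}$ for every $k \geq N$; enlarging $N$ by one also handles the $\tau_{k+1}$ and $\sigma_{k+1}$ indices appearing in the statement.

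With this choice, each of the four inequalities in \cref{eq:lemma:infsup} follows from the monotonicity of $t \mapsto 1/t$ on $\mathbf{R}_{++}$. For the first inequality, $\tau_{k+1} \leq \bar{\tau}$ immediately gives $\frac{1}{2\tau_{k+1}} \geq \frac{1}{2\bar{\tau}} > 0$. For the third and fourth, the same substitution together with $\norm{K}\bar{\tau} < \tfrac{1}{2}$ and $\norm{K}\bar{\sigma} < \tfrac{1}{2}$ yields both the comparison and the strict positivity. For the second, $\sigma_{k+1} \leq \bar{\sigma}$ together with $\tau_k \leq \bar{\tau}$ produces the comparison; to verify positivity, I would multiply $\norm{K}\bar{\tau} < \tfrac{1}{2}$ by $\norm{K}\bar{\sigma} < \tfrac{1}{2}$ to obtain $\norm{K}^{2}\bar{\tau}\bar{\sigma} < \tfrac{1}{4}$, which rearranges to $\frac{\norm{K}^{2}\bar{\tau}}{2} < \frac{1}{8\bar{\sigma}} < \frac{1}{2\bar{\sigma}}$.

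There is no serious obstacle here; the argument is essentially a bookkeeping exercise built on the definition of $\limsup$. The only mild subtlety is that the second inequality couples $\sigma$ and $\tau$ through $\norm{K}^{2}$, which is precisely why both strict $\limsup$ bounds (not just one of them) are assumed in the hypotheses: together they guarantee the right-hand side $\frac{1}{2\bar{\sigma}} - \frac{\norm{K}^{2}\bar{\tau}}{2}$ is strictly positive. The degenerate case $\norm{K} = 0$ is handled trivially once the two $\limsup$'s are known to be finite.
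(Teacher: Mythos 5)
Your proposal is correct and follows essentially the same route as the paper: choose $\bar{\tau}$ and $\bar{\sigma}$ strictly above the respective $\limsup$'s while keeping $\norm{K}\bar{\tau} < \tfrac{1}{2}$ and $\norm{K}\bar{\sigma} < \tfrac{1}{2}$, derive positivity of the coupled term from $\norm{K}^{2}\bar{\tau}\bar{\sigma} < \tfrac{1}{4}$, and then obtain $N$ from the definition of $\limsup$. The paper phrases the final step via $\liminf$ of the four composite expressions rather than via the eventual bounds $\tau_{k} \leq \bar{\tau}$, $\sigma_{k} \leq \bar{\sigma}$, but this is a cosmetic difference.
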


\begin{proof}
Take $\bar{\tau} > \limsup_{i \to \infty} \tau_{i} \geq 0$ 
and $\bar{\sigma} > \limsup_{i \to \infty} \sigma_{i} \geq 0$  
such that $  \norm{K}\bar{\tau}  < \frac{1}{2}$ and $  \norm{K} \bar{\sigma} < \frac{1}{2}$. 
Note that 
\[ 
 \norm{K}\bar{\tau}  < \frac{1}{2} \text{ and } \norm{K} \bar{\sigma} < \frac{1}{2}	
 \Rightarrow	\norm{K}^{2} \bar{\tau} \bar{\sigma} < \frac{1}{4} <1 
 \Rightarrow \frac{1}{ \bar{\sigma} } - \norm{K}^{2} \bar{\tau} >0.
\]
Hence, it is clear that 
\begin{align*}
&\liminf_{i \to \infty}	 \frac{1}{2 \tau_{i+1}}  >  \frac{1}{2 \bar{\tau}}  >0;\\
& \liminf_{i \to \infty} \left( \frac{1}{2 \sigma_{i+1}}  
- \frac{\norm{K}^{2}\tau_{i}}{2}  \right)   >  \frac{1 }{2 \bar{\sigma}} 
-\frac{\norm{K}^{2} \bar{\tau}}{2} >0;\\
& \liminf_{i \to \infty} \left( \frac{1}{2 \tau_{i}}  - \norm{K}  \right) 
>\frac{1}{2 \bar{\tau}} -  \norm{K} >0;\\
&  \liminf_{i \to \infty} \left(  \frac{1}{2 \sigma_{i}}  - \norm{K}  \right)  
 >   \frac{1}{2 \bar{\sigma}}  -\norm{K} >0.
\end{align*}
Hence,    
there exists $N \in \mathbf{N}$ such that for every  $k \in \mathbf{N}$ with 
$k \geq N$, 
 inequalities in \cref{eq:lemma:infsup} hold. 
\end{proof}

\begin{theorem} \label{theorem:convergencef}
Suppose that $(x^{*}, y^{*}) \in X \times Y$ is a saddle-point of $f$, 
that    $(\forall i \in \mathbf{N})$ 
$\norm{K} \beta^{2}_{i} = 	  
\left( \frac{1}{ \tau_{i}} -\frac{1}{ \tau_{i+1}} \right)$ and  
$\norm{K} (1-\alpha_{i-1})^{2} =   \left( \frac{1}{  \sigma_{i}} 
- \frac{1}{  \sigma_{i+1}} \right)$, 
and that
 $\limsup_{i \to \infty} \tau_{i}  < \infty$, $\limsup_{i \to \infty} \sigma_{i} < \infty$,
$\norm{K} \limsup_{i \to \infty} \tau_{i} < \frac{1}{2}$,
 and $\norm{K} \limsup_{i \to \infty} 
\sigma_{i} < \frac{1}{2}$.
 Then the following statements hold. 
 \begin{enumerate}
 	\item  \label{theorem:convergencef:k} 
	$(\forall k \in \mathbf{N})$  
	$\frac{1}{2 \tau_{k+1}} \norm{x^{*} -x^{k+1}}^{2} +    
	\left( \frac{1}{2 \sigma_{k+1}}  - 
 	\frac{\norm{K}^{2}\tau_{k}}{2}  \right)  \norm{y^{*} -y^{k+1}}^{2} 
 	+ \sum^{k-1}_{i=0} \left( \frac{1}{2 \tau_{i}} -\norm{K}   \right)  \norm{x^{i}-x^{i+1}}^{2} 
	+  \sum^{k}_{i=0} \left(  \frac{1}{2 \sigma_{i}} - \norm{K}   \right)  \norm{y^{i}-y^{i+1}}^{2}
 	\leq  	\frac{1}{2 \tau_{0}} \norm{x^{*} -x^{0}}^{2}  
	+  \frac{1}{2 \sigma_{0}} \norm{y^{*} -y^{0}}^{2}$.
 
 	\item \label{theorem:convergencef:ineq}  
 		Take $\bar{\tau} > \limsup_{i \to \infty} \tau_{i} \geq 0$ 
		and $\bar{\sigma} > \limsup_{i 
 		\to \infty} \sigma_{i} \geq 0$  such that $  \norm{K}\bar{\tau}  < \frac{1}{2}$ and $  
 		\norm{K} \bar{\sigma} < \frac{1}{2}$. 
 Then there exists $N \in \mathbf{N}$ such that for all $k \in \mathbf{N}$ 
 with $k \geq  N+1$ we have  $\frac{1}{2 \bar{\tau}}  \norm{x^{*} -x^{k+1}}^{2} 
 +     \left( \frac{1 }{2 \bar{\sigma}} - \frac{\norm{K}^{2}
 \bar{\tau}}{2}  \right) \norm{y^{*} -y^{k+1}}^{2} + 
 	 \left( \frac{1}{2 \bar{\tau}} 
	 -\norm{K}   \right) \sum^{k-1}_{i=N+1}   
 	 \norm{x^{i}-x^{i+1}}^{2} 
	 + \left(  \frac{1}{2 \bar{\sigma}} - \norm{K}   \right)  
 	 \sum^{k}_{i=N+1}  \norm{y^{i}-y^{i+1}}^{2}
 	\leq   \frac{1}{2 \tau_{0}} \norm{x^{*} -x^{0}}^{2}  
	+  \frac{1}{2 \sigma_{0}} \norm{y^{*} 
 	-y^{0}}^{2}  +
 	\\  \sum^{N}_{i=0} \left( \norm{K} - \frac{1}{2 \tau_{i}}   \right)  
 	\norm{x^{i}-x^{i+1}}^{2} 
	+  \sum^{N}_{i=0} \left(  \norm{K} - \frac{1}{2 \sigma_{i}}   \right) 
	 \norm{y^{i}-y^{i+1}}^{2}$. 
 	
Consequently, $\lim_{k \to \infty} \norm{x^{k}-x^{k+1}} =0$ 
and $\lim_{k \to \infty} \norm{y^{k} -y^{k+1}} =0$.
\item  \label{theorem:convergencef:bounded} $\left((x^{k},y^{k}) \right)_{k \in 
\mathbf{N}}$ 
and $\left( (\hat{x}_{k},\hat{y}_{k}) \right)_{k \in \mathbf{N}}$ are bounded. 
\item \label{theorem:convergencef:fktohat} $f(\hat{x}_{k}, \hat{y}_{k}) \to f (x^{*}, y^{*}) $.
 \end{enumerate}
\end{theorem}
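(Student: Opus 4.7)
Part (i). The plan is to apply \cref{corollary:Ineqxy} with $(x,y)=(x^*,y^*)$ and $p=0$. The initialization $x^{-1}=x^0$ and $y^{-1}=y^0$ kills the boundary terms $\norm{K}\norm{x^{-1}-x^0}^2$ and $\abs{\innp{K(x^0-x^{-1}), y^0-y^*}}$. Moreover, the saddle-point property combined with \cref{fact:saddlepoint} gives $f(x^{i+1},y^*) - f(x^*,y^{i+1}) \geq 0$ for every $i$, so the nonnegative sum $\sum_{i=0}^{k}(f(x^{i+1},y^*) - f(x^*,y^{i+1}))$ on the left-hand side of the inequality in \cref{corollary:Ineqxy} can simply be dropped, yielding the claimed bound.

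Parts (ii) and (iii). I would first invoke \cref{lemma:infsup} to obtain a threshold $N$ together with $\bar{\tau},\bar{\sigma}$ making the four coefficients in \cref{eq:lemma:infsup} positive and uniformly bounded below for indices $\geq N$. Starting from Part~(i), I split each sum at index $N$: for $i \leq N$ the coefficients $\tfrac{1}{2\tau_i}-\norm{K}$ and $\tfrac{1}{2\sigma_i}-\norm{K}$ may fail to be positive, so the corresponding finitely many terms are transferred to the right-hand side; for $i\geq N+1$ I replace each coefficient by its uniform positive lower bound from \cref{lemma:infsup}, and the boundary coefficients $\tfrac{1}{2\tau_{k+1}}$ and $\tfrac{1}{2\sigma_{k+1}}-\tfrac{\norm{K}^2\tau_k}{2}$ are treated analogously. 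This produces the inequality in~(ii), whose right-hand side is independent of $k$; the uniform positivity of the tail coefficients then forces $\sum_{i\geq N+1}\norm{x^i-x^{i+1}}^2<\infty$ and $\sum_{i\geq N+1}\norm{y^i-y^{i+1}}^2<\infty$, so $\norm{x^k-x^{k+1}}\to 0$ and $\norm{y^k-y^{k+1}}\to 0$. The same inequality also bounds $\norm{x^*-x^{k+1}}^2$ and $\norm{y^*-y^{k+1}}^2$ uniformly in $k$, which shows $((x^k,y^k))_{k\in\mathbf{N}}$ is bounded; the averages $((\hat{x}_k,\hat{y}_k))_{k\in\mathbf{N}}$ then inherit boundedness as convex combinations.

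Part (iv). This is the main step. The plan is to sandwich $f(\hat{x}_k,\hat{y}_k)-f(x^*,y^*)$ between the upper and lower bounds supplied by \cref{proposition:fxyk+1-xyhat} and show both bounds tend to zero. First, $\norm{K}\beta_i^2=\tfrac{1}{\tau_i}-\tfrac{1}{\tau_{i+1}}\geq 0$ forces $1/\tau_i$ to be nonincreasing and bounded, hence convergent, so $\beta_i\to 0$; symmetrically $1/\sigma_i$ converges and $\alpha_{i-1}\to 1$. The telescoping distance-squared sums such as $\sum_{i=0}^k\tfrac{1}{2\tau_i}(\norm{x^*-x^i}^2-\norm{x^*-x^{i+1}}^2)$ are then rewritten by Abel summation; the monotonicity of $1/\tau_i$ yields bounded total variation, while Part~(iii) supplies a uniform bound on $\norm{x^*-x^i}^2$, so each such sum is $O(1)$ and its $\tfrac{1}{k+1}$-multiple is $O(1/k)$. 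The analogous sums with the drifting reference $\hat{y}_k$ (resp.\ $\hat{x}_k$) in place of $y^*$ (resp.\ $x^*$) are handled identically, since $\norm{\hat{y}_k-y^i}$ remains uniformly bounded by Part~(iii). The terms $\tfrac{1}{k+1}\sum\tfrac{1}{2\tau_i}\norm{x^i-x^{i+1}}^2$ and their $y$-analogues vanish because Part~(ii) makes the underlying series summable. Finally, for the cross $f$-terms in \cref{proposition:fxyk+1-xyhat}, I apply \cref{lemma:MultiplefIneq}\cref{lemma:MultiplefIneq:fK} to rewrite each bracket as $-\innp{K(x^{i+1}-\bar{x}^i),y^{i+1}-\hat{y}_k}+\innp{K(x^{i+1}-x^*),y^{i+1}-\bar{y}^{i+1}}$, expand $\bar{x}^i$ and $\bar{y}^{i+1}$ via \cref{eq:lemma:algorithmsymplity:barx} and \cref{eq:lemma:algorithmsymplity:bary}, and combine $\beta_i\to 0$, $\alpha_{i-1}\to 1$, $\norm{x^i-x^{i-1}}\to 0$, and $\norm{y^{i+1}-y^i}\to 0$ with boundedness to see that each summand tends to zero; a Cesaro argument then makes their averages vanish. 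Parallel work on \cref{proposition:fxyk+1-xyhat:hat-k} delivers the matching lower estimate, and the squeeze completes the proof. The main obstacle is exactly this bookkeeping for the $\hat{y}_k$- and $\hat{x}_k$-dependent telescoping sums, where the reference of the distance-squared terms drifts with $k$; the resolution is that the coefficient equations force bounded total variation of $1/\tau_i$ and $1/\sigma_i$, which keeps the Abel-summation estimates uniform in $k$.
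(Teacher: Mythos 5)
Your proposal is correct, and for parts (i)--(iii) it follows essentially the paper's route: part (i) via \cref{corollary:Ineqxy} with $p=0$ and $(x,y)=(x^{*},y^{*})$ is just a repackaging of the paper's combination of \cref{corollary:basicxkyksumineq}\cref{corollary:basicxkyksumineq:leqhat} with \cref{corollary:betaialphai-1Ineq}, and your treatment of (ii) and (iii) (split the sums at the threshold $N$ from \cref{lemma:infsup}, move the finitely many possibly-negative terms to the right, read off summability and boundedness) is exactly what the paper does. Part (iv) is where you genuinely diverge. The paper simply feeds \cref{corollary:betaialphai-1Ineq} with $(x,y)=(x^{*},\hat{y}_{k})$ and $(x,y)=(\hat{x}_{k},y^{*})$ into the two bounds of \cref{proposition:fxyk+1-xyhat}; that corollary has already absorbed both the telescoping distance sums and the cross $f$-terms via the Cauchy--Schwarz estimates of \cref{lemma:CSInequalities}, so after discarding the nonnegative terms for $k\geq N+1$ one is left with an explicit $O(1/k)$ bound on $\abs{f(\hat{x}_{k},\hat{y}_{k})-f(x^{*},y^{*})}$. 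You instead analyze the raw bounds of \cref{proposition:fxyk+1-xyhat} directly: Abel summation on the telescoping sums (using that the coefficient identities force $1/\tau_{i}$ and $1/\sigma_{i}$ to be nonincreasing, hence of bounded variation, which keeps the estimate uniform even though the reference point $\hat{y}_{k}$ or $\hat{x}_{k}$ drifts with $k$), and a Ces\`aro argument on the cross terms using $\beta_{i}\to 0$, $\alpha_{i-1}\to 1$, and the vanishing increments from part (ii). Both arguments are sound; the paper's is shorter given the lemmas it has already built and yields an explicit $O(1/k)$ rate, while yours is more self-contained at the level of part (iv) and makes transparent why the drifting references cause no harm, at the cost of delivering only $o(1)$ unless the Ces\`aro step is quantified. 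The only cosmetic caveat is the degenerate case $\norm{K}=0$, where the identities no longer force $\beta_{i}\to 0$ or $\alpha_{i-1}\to 1$; this is harmless since every cross term you estimate carries a factor of $K$, but it is worth a sentence.
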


\begin{proof}
\cref{theorem:convergencef:k}:   
Invoke  \cref{corollary:basicxkyksumineq}\cref{corollary:basicxkyksumineq:leqhat} and \cref{corollary:betaialphai-1Ineq}  to establish that 
\begin{align*}
	0 \leq &  
	\sum^{k}_{i=0} \frac{1}{2 \sigma_{i}}  \left( \norm{ y^{*} - y^{i}}^{2} - \norm{ 
	y^{*}-y^{i+1}}^{2} - \norm{y^{i}-y^{i+1}}^{2} \right) \\
	&+  \sum^{k}_{i=0} \frac{1}{2 \tau_{i}} \left(  \norm{x^{*} -x^{i}}^{2} -\norm{x^{*}
	-x^{i+1}}^{2} - \norm{x^{i} -x^{i+1}}^{2}\right)\\
	& +\sum^{k}_{i=0} \left(  f(x^{i+1}, y^{*} ) - f(\bar{x}^{i}, y^{*} ) + f(\bar{x}^{i}, y^{i+1}) 
	-f(x^{*},y^{i+1}) - f(x^{i+1}, \bar{y}^{i+1}) + f(x^{*},\bar{y}^{i+1})   \right) \\
	\leq    & \frac{1}{2 \tau_{0}} \norm{x^{*} -x^{0}}^{2} -  \frac{1}{2 \tau_{k+1}} 
	\norm{x^{*} -x^{k+1}}^{2} +  \frac{1}{2 \sigma_{0}} \norm{y^{*}  -y^{0}}^{2} - \left( 
	\frac{1}{2 \sigma_{k+1}}  - \frac{\norm{K}^{2}\tau_{k}}{2}  \right)  \norm{y^{*}  
	-y^{k+1}}^{2}\\
	& + \sum^{k-1}_{i=0} \left( \norm{K} - \frac{1}{2 \tau_{i}}   \right)  \norm{x^{i}-x^{i+1}}^{2} 
	+  \sum^{k}_{i=0} \left(  \norm{K} - \frac{1}{2 \sigma_{i}}   \right)  \norm{y^{i}-y^{i+1}}^{2}. 
\end{align*}
This entails that for every $k\in \mathbf{N}$,
\begin{align*}
	&\frac{1}{2 \tau_{k+1}} \norm{x^{*} -x^{k+1}}^{2} +    \left( \frac{1}{2 \sigma_{k+1}}  - 
	\frac{\norm{K}^{2}\tau_{k}}{2}  \right)  \norm{y^{*} -y^{k+1}}^{2}\\
	\leq &	\frac{1}{2 \tau_{0}} \norm{x^{*} -x^{0}}^{2}  +  \frac{1}{2 \sigma_{0}} 
	\norm{y^{*} -y^{0}}^{2}  
	+ \sum^{k-1}_{i=0} \left( \norm{K} - \frac{1}{2 \tau_{i}}   \right)  \norm{x^{i}-x^{i+1}}^{2} 
	+  \sum^{k}_{i=0} \left(  \norm{K} - \frac{1}{2 \sigma_{i}}   \right)  \norm{y^{i}-y^{i+1}}^{2}.
\end{align*}
After some rearrangement, we deduce the required inequality in \cref{theorem:convergencef:k}.

\cref{theorem:convergencef:ineq}: 	
 As a consequence of \cref{lemma:infsup},
  there exists $N \in \mathbf{N}$ such that for all $i \in \mathbf{N}$ with $i \geq N$,
\begin{subequations}\label{eq:theorem:convergencef:<}
	\begin{align} 
		&\frac{1}{2 \tau_{i+1}} \geq  \frac{1}{2 \bar{\tau}}  >0;\\
		& \frac{1}{2 \sigma_{i+1}}  - \frac{\norm{K}^{2}\tau_{i}}{2}   
		\geq  \frac{1 }{2 \bar{\sigma}} - \frac{\norm{K}^{2} \bar{\tau}}{2}  >0;\\
		& \frac{1}{2 \tau_{i}} -\norm{K}  \geq \frac{1}{2 \bar{\tau}} -\norm{K}   >0;\\
		&\frac{1}{2 \sigma_{i}} - \norm{K}  \geq \frac{1}{2 \bar{\sigma} }- \norm{K}>0.
	\end{align}
\end{subequations}

 Combine    \cref{eq:theorem:convergencef:<}   with the inequality 
 in \cref{theorem:convergencef:k} above to yield that for all $k \in \mathbf{N}$ with $k 
 \geq N+1$,
 \begin{align*}
 	&\frac{1}{2 \bar{\tau}}  \norm{x^{*} -x^{k+1}}^{2} +     
	\left( \frac{1 }{2 \bar{\sigma}} - 
 	\frac{\norm{K}^{2} \bar{\tau}}{2}  \right) \norm{y^{*} -y^{k+1}}^{2} \\
 	 &+ \sum^{N}_{i=0} \left(  \frac{1}{2 \tau_{i}} -\norm{K}   \right)  \norm{x^{i}-x^{i+1}}^{2} 
 	+ \left( \frac{1}{2 \bar{\tau}} -\norm{K}   \right) \sum^{k-1}_{i=N+1}   \norm{x^{i}-x^{i+1}}^{2} \\
 	&+  \sum^{N}_{i=0} \left(  \frac{1}{2 \sigma_{i}}  - \norm{K}  \right)  \norm{y^{i}-y^{i+1}}^{2}
 	+ \left(  \frac{1}{2 \bar{\sigma}} - \norm{K}   \right)  \sum^{k}_{i=N+1}  \norm{y^{i}-y^{i+1}}^{2}\\
 	\leq    &\frac{1}{2 \tau_{k+1}} \norm{x^{*} -x^{k+1}}^{2} 
	+    \left( \frac{1}{2 \sigma_{k+1}}  - 
 	\frac{\norm{K}^{2}\tau_{k}}{2}  \right)  \norm{y^{*}  -y^{k+1}}^{2} \\
 	&+ \sum^{k-1}_{i=0} \left( \frac{1}{2 \tau_{i}} -\norm{K}   \right)  \norm{x^{i}-x^{i+1}}^{2} 
	+  \sum^{k}_{i=0} \left(  \frac{1}{2 \sigma_{i}} - \norm{K}   \right)  \norm{y^{i}-y^{i+1}}^{2}\\
 	\leq  	&\frac{1}{2 \tau_{0}} \norm{x^{*} -x^{0}}^{2}  +  \frac{1}{2 \sigma_{0}} 
 	\norm{y^{*}  -y^{0}}^{2},
 \end{align*}
which leads to the desired inequality in \cref{theorem:convergencef:ineq}.
Because 
$\frac{1}{2 \bar{\tau}} -  \norm{K} >0$,  $\frac{1}{2 \bar{\sigma}} - \norm{K}  >0$, 
and $\frac{1 }{2 \bar{\sigma}} -\frac{\norm{K}^{2} \bar{\tau}}{2} >0$,
it is easy to see that 
$\lim_{k \to \infty} \norm{x^{k}-x^{k+1}} =0$ and 
$\lim_{k \to \infty} \norm{y^{k} -y^{k+1}} =0$.
	
\cref{theorem:convergencef:bounded}: 
By  the inequality in \cref{theorem:convergencef:ineq}, 
bearing $\frac{1}{2 \bar{\tau}} -  \norm{K} >0$,  $\frac{1}{2 \bar{\sigma}} - \norm{K}  >0$, 
and $\frac{1 }{2 \bar{\sigma}} -\frac{\norm{K}^{2} \bar{\tau}}{2} >0$ in mind, 
we see clearly the boundedness of $\left(  (x^{k},y^{k}) \right)_{k \in \mathbf{N}}$.

 Taking the construction of $\left( (\hat{x}_{k},\hat{y}_{k}) \right)_{k \in \mathbf{N}}$ 
 presented in \cref{eq:hatx_ky_k} into account, 
 we know that the boundedness of $\left( (\hat{x}_{k},\hat{y}_{k}) \right)_{k \in 
 \mathbf{N}}$ 
 follows clearly from that of  $\left(  (x^{k},y^{k}) \right)_{k \in \mathbf{N}}$.  
	 	
\cref{theorem:convergencef:fktohat}:  Let $k \in \mathbf{N}$.

Applying  \cref{proposition:fxyk+1-xyhat}\cref{proposition:fxyk+1-xyhat:k-hat} and 
employing \cref{corollary:betaialphai-1Ineq} with $x=x^{*}$ and $y=\hat{y}_{k}$, 
we obtain that
\begin{subequations} \label{eq:theorem:convergencef:khat}
	\begin{align}
		& f(\hat{x}_{k}, \hat{y}_{k}) -f(x^{*}, y^{*})  + \frac{1}{2 \tau_{k+1} (k+1)} 
		\norm{x^{*}  
		-x^{k+1}}^{2} +\frac{1}{k+1} \left( \frac{1}{  2\sigma_{k+1}}  -  
		\frac{\norm{K}^{2}\tau_{k}}{2}   \right)  \norm{\hat{y}_{k}-y^{k+1}}^{2} \\
		\leq &
		\frac{1}{2 \tau_{0}(k+1)} \norm{x^{*} -x^{0}}^{2} +  \frac{1}{2 \sigma_{0}(k+1)} 
		\norm{\hat{y}_{k} -y^{0}}^{2}\\   
		& +  \frac{1}{k+1} \sum^{k-1}_{i=0} \left( \norm{K} - \frac{1}{2 \tau_{i}}   \right) 
		 \norm{x^{i}-x^{i+1}}^{2} +  
		  \frac{1}{k+1} \sum^{k}_{i=0} \left(  \norm{K} 
		  - \frac{1}{2 \sigma_{i}} \right) \norm{y^{i}-y^{i+1}}^{2}.
	\end{align}
\end{subequations} 
Similarly, invoke \cref{proposition:fxyk+1-xyhat}\cref{proposition:fxyk+1-xyhat:hat-k} and 
apply  \cref{corollary:betaialphai-1Ineq} with $x=\hat{x}_{k}$ and $y=y^{*}$ to establish
\begin{subequations}\label{eq:theorem:convergencef:hatk}
	\begin{align}
		& f  (x^{*}, y^{*})	-f(\hat{x}_{k}, \hat{y}_{k}) + \frac{1}{2 \tau_{k+1} (k+1)} 
		\norm{\hat{x}_{k} 
		-x^{k+1}}^{2} +\frac{1}{k+1}  \left( \frac{1}{2 \sigma_{k+1}}  - 
		\frac{\norm{K}^{2}\tau_{k}}{2}  \right)  \norm{y^{*} -y^{k+1}}^{2}\\
		\leq &  \frac{1}{2 \tau_{0}(k+1)} \norm{\hat{x}_{k} -x^{0}}^{2} +  \frac{1}{2 
		\sigma_{0}(k+1)} \norm{y^{*} -y^{0}}^{2} \\
		& +  \frac{1}{k+1} \sum^{k-1}_{i=0} \left( \norm{K} 
		- \frac{1}{2 \tau_{i}}   \right)  \norm{x^{i}-x^{i+1}}^{2} 
		+   \frac{1}{k+1}  \sum^{k}_{i=0} \left(  \norm{K} 
		- \frac{1}{2 \sigma_{i}}   \right)  \norm{y^{i}-y^{i+1}}^{2}. 
	\end{align}
\end{subequations}

 Combine \cref{eq:theorem:convergencef:khat} and 
 \cref{eq:theorem:convergencef:hatk} with \cref{eq:theorem:convergencef:<} to obtain 
 that  for all $k \in \mathbf{N}$ with $k \geq N+1$,  
\begin{align*}
	&f(\hat{x}_{k}, \hat{y}_{k}) -f (x^{*}, y^{*})  \\
	 \leq &
	\frac{1}{2 \tau_{0}(k+1)} \norm{x^{*} -x^{0}}^{2} +  \frac{1}{2 \sigma_{0}(k+1)} 
	\norm{\hat{y}_{k} -y^{0}}^{2}\\   
	& +  \frac{1}{k+1} \sum^{N}_{i=0} \left( \norm{K} 
	- \frac{1}{2 \tau_{i}}   \right)  \norm{x^{i}-x^{i+1}}^{2} 
	+   \frac{1}{k+1} \sum^{N}_{i=0} \left(  \norm{K} - \frac{1}{2 \sigma_{i}}   \right)  \norm{y^{i}-y^{i+1}}^{2}
\end{align*}
and that 
\begin{align*}
	& f  (x^{*}, y^{*})	-f(\hat{x}_{k}, \hat{y}_{k}) \\ 
	\leq &  \frac{1}{2 \tau_{0}(k+1)} \norm{\hat{x}_{k} -x^{0}}^{2} 
	+  \frac{1}{2 \sigma_{0}(k+1)} \norm{y^{*} -y^{0}}^{2} \\
	& +  \frac{1}{k+1} \sum^{N}_{i=0} \left( \norm{K} 
	- \frac{1}{2 \tau_{i}}   \right)  \norm{x^{i}-x^{i+1}}^{2} 
	+   \frac{1}{k+1}  \sum^{N}_{i=0} \left(  \norm{K} - \frac{1}{2 \sigma_{i}}   \right)  \norm{y^{i}-y^{i+1}}^{2}. 
\end{align*}
The last two inequalities   together with the boundedness results proved in 
\cref{theorem:convergencef:bounded} above guarantee that $f(\hat{x}_{k}, \hat{y}_{k}) 
\to f(x^{*}, y^{*})  $. 
\end{proof}

\begin{theorem} \label{theorem:pointsconvergence}
	Suppose that $(x^{*}, y^{*}) \in X \times Y$ is a saddle-point of $f$, 
	that    $(\forall i \in \mathbf{N})$ 
	$\norm{K} \beta^{2}_{i} = 	  \left( \frac{1}{ \tau_{i}} -\frac{1}{ \tau_{i+1}} 
	\right)$ and  $\norm{K} (1-\alpha_{i-1})^{2} =   \left( \frac{1}{  \sigma_{i}} - \frac{1}{  
	\sigma_{i+1}} \right)$, and that
	$\limsup_{i \to \infty} \tau_{i}  < \infty$, $\limsup_{i \to \infty} \sigma_{i} < \infty$,
	$\norm{K} \limsup_{i \to \infty} \tau_{i} < \frac{1}{2}$, 
	and $\norm{K} \limsup_{i \to \infty} \sigma_{i} < \frac{1}{2}$.
	Then the following statements hold. 
\begin{enumerate}
\item \label{theorem:pointsconvergence:cluster} 
$\left((\hat{x}_{k},\hat{y}_{k}) \right)_{k \in \mathbf{N}}$  
has at least one weakly sequential cluster point and every weakly sequential 
cluster point of $\left((\hat{x}_{k},\hat{y}_{k}) \right)_{k \in \mathbf{N}}$  
is a saddle-point of $f$.
\item  \label{theorem:pointsconvergence:clusterfinitedim} Suppose that 
$\mathcal{H}_{1}$ and $\mathcal{H}_{2}$ are finite-dimensional  and that $\sup_{k 
\in \mathbf{N}} \alpha_{k} < +\infty$, $\sup_{k \in \mathbf{N}} \beta_{k} < +\infty$, 
$0<\inf_{k \in \mathbf{N}} \sigma_{k} \leq \sup_{k \in \mathbf{N}} \sigma_{k} < 
\infty$, and $0<\inf_{k \in \mathbf{N}} \tau_{k} \leq \sup_{k \in \mathbf{N}} \tau_{k} 
< \infty$.  Then $\left((x^{k},y^{k}) \right)_{k \in \mathbf{N}}$ converges to a 
saddle-point of $f$.
\end{enumerate}
\end{theorem}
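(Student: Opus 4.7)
The plan for \cref{theorem:pointsconvergence:cluster} is to combine the boundedness of $((\hat{x}_{k},\hat{y}_{k}))_{k \in \mathbf{N}}$ established in \cref{theorem:convergencef}\cref{theorem:convergencef:bounded} with \cref{lemma:clustersaddletkxy}. Boundedness in a Hilbert space provides at least one weakly sequential cluster point via \cite[Lemma~2.45]{BC2017}. To identify every such cluster point as a saddle-point, I would verify the hypothesis
\[
(\forall (x,y) \in X \times Y) \quad \limsup_{k \to \infty} \bigl( f(\hat{x}_{k},y) - f(x,\hat{y}_{k}) \bigr) \leq 0
\]
demanded by \cref{lemma:clustersaddletkxy}. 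The upper bound in \cref{lemma:f-fbounded} breaks into an $O(1/k)$ initialization piece, two negative square-norm terms (nonpositive by \cref{lemma:infsup}), and two averaged sums whose coefficients $\norm{K}-\tfrac{1}{2\tau_{i}}$ and $\norm{K}-\tfrac{1}{2\sigma_{i}}$ are strictly negative for $i\geq N$ by \cref{lemma:infsup}. Splitting each sum at $N$ separates an initial finite block, whose contribution after dividing by $k+1$ tends to $0$, from a nonpositive tail, so the limsup of the whole right-hand side is $\leq 0$, and \cref{lemma:clustersaddletkxy} finishes this part.

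For \cref{theorem:pointsconvergence:clusterfinitedim}, finite-dimensionality together with the boundedness of $((x^{k},y^{k}))$ from \cref{theorem:convergencef}\cref{theorem:convergencef:bounded} yields strong sequential compactness. The assumptions $\sup_{k}\alpha_{k}<\infty$, $\sup_{k}\beta_{k}<\infty$, $\inf_{k}\sigma_{k}>0$, $\inf_{k}\tau_{k}>0$ together with $\lim_{k}\norm{x^{k}-x^{k+1}}=\lim_{k}\norm{y^{k}-y^{k+1}}=0$ from \cref{theorem:convergencef}\cref{theorem:convergencef:ineq} let me apply \cref{lemma:clustersaddle-point} to conclude that every cluster point of $((x^{k},y^{k}))$ is a saddle-point of $f$. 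Fix a subsequence $(x^{k_{i}},y^{k_{i}})\to(\bar{x},\bar{y})$. To upgrade this to full-sequence convergence I would apply \cref{corollary:Ineqxy} at $(x,y)=(\bar{x},\bar{y})$: \cref{fact:saddlepoint} gives $f(x^{i+1},\bar{y})-f(\bar{x},y^{i+1})\geq 0$ for every $i$, and \cref{lemma:infsup} turns the two sum coefficients $\norm{K}-\tfrac{1}{2\tau_{i}}$ and $\norm{K}-\tfrac{1}{2\sigma_{i}}$ negative for $i$ large, so for every $p\geq N+1$ and $k\geq p$ the inequality simplifies to
\begin{align*}
&\frac{1}{2\tau_{p}}\norm{\bar{x}-x^{p}}^{2} + \frac{1}{2\sigma_{p}}\norm{\bar{y}-y^{p}}^{2} + \norm{K}\norm{x^{p-1}-x^{p}}^{2} + \abs{\innp{K(x^{p}-x^{p-1}),y^{p}-\bar{y}}} \\
&\qquad \geq \frac{1}{2\tau_{k+1}}\norm{\bar{x}-x^{k+1}}^{2} + \left( \frac{1}{2\sigma_{k+1}} - \frac{\norm{K}^{2}\tau_{k}}{2}\right)\norm{\bar{y}-y^{k+1}}^{2}.
\end{align*}
Taking $p=k_{i}$, the left-hand side tends to $0$ as $i\to\infty$ because $x^{k_{i}}\to\bar{x}$, $y^{k_{i}}\to\bar{y}$, $\norm{x^{k_{i}-1}-x^{k_{i}}}\to 0$, and $\sigma_{k_{i}},\tau_{k_{i}}$ are bounded. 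A standard $\varepsilon$-argument then forces the right-hand quadratic in $(\bar{x}-x^{k+1},\bar{y}-y^{k+1})$ to vanish as $k\to\infty$, and \cref{lemma:infsup} converts this into $\norm{\bar{x}-x^{k}}\to 0$ and $\norm{\bar{y}-y^{k}}\to 0$.

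The main obstacle is that \cref{corollary:Ineqxy} is \emph{not} a clean Fej\'er inequality: its right side carries two subtracted tail sums and the boundary cross-term $\abs{\innp{K(x^{p}-x^{p-1}),y^{p}-\bar{y}}}$, neither of which is visibly harmless. The critical observation is that \cref{lemma:infsup} flips the signs of the sum coefficients past index $N$, so the subtracted tails become nonnegative and can be dropped from the right-hand side of the estimate, while \cref{theorem:convergencef}\cref{theorem:convergencef:ineq} sends $\norm{x^{p-1}-x^{p}}\to 0$ and thus kills the cross-term along the chosen subsequence. An analogous cancellation in the right-hand side of \cref{lemma:f-fbounded} underlies part \cref{theorem:pointsconvergence:cluster}.
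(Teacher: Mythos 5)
Your proposal is correct and follows essentially the same route as the paper: part (i) via \cref{lemma:f-fbounded}, the sign conditions from \cref{lemma:infsup} to split the sums at $N$ and discard the nonpositive tails, and then \cref{lemma:clustersaddletkxy} with the boundedness from \cref{theorem:convergencef}; part (ii) via \cref{lemma:clustersaddle-point} for cluster-point identification followed by \cref{corollary:Ineqxy} evaluated at the limit of a convergent subsequence with $p=k_{i}$, dropping the sign-controlled tail sums and letting the boundary cross-term vanish. No gaps; the argument matches the paper's proof step for step.
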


\begin{proof}
According to \cref{lemma:infsup}, 
because  $\norm{K}  \limsup_{i \to \infty} \tau_{i} < \frac{1}{2}$ 
and $\norm{K}  \limsup_{i \to \infty} \sigma_{i} < \frac{1}{2}$,  
there exists $N \in \mathbf{N}$ such that for all $k \in \mathbf{N}$ with $k \geq N$ 
\begin{align} \label{eq:theorem:pointsconvergence}
\frac{1}{2 \tau_{k+1}}   >0,
\frac{1}{2 \sigma_{k+1}}  - \frac{\norm{K}^{2}\tau_{k}}{2}     >0,
\frac{1}{2 \tau_{k}} -\norm{K}    >0,
\text{ and } \frac{1}{2 \sigma_{k}} - \norm{K}  >0.
\end{align}

\cref{theorem:pointsconvergence:cluster}: 
Employ \cref{eq:theorem:pointsconvergence} and  
\cref{lemma:f-fbounded} to derive that for all $k \in \mathbf{N}$ with 
$k \geq N+1$ and $(x,y) \in X \times Y $,
\begin{align*}
&f(\hat{x}_{k}, y)   -f(x, \hat{y}_{k}) \\
\leq &\frac{1}{2  (k+1)} \left( \frac{\norm{x -x^{0}}^{2} }{\tau_{0}}  
+  \frac{ \norm{y -y^{0}}^{2}}{ \sigma_{0} }  \right)  
+\frac{1}{k+1} \sum^{N}_{i=0} \left( \norm{K} 
- \frac{1}{2 \tau_{i}}   \right)  \norm{x^{i}-x^{i+1}}^{2} \\
&+  \frac{1}{k+1} \sum^{N}_{i=0} \left(  \norm{K} - \frac{1}{2 \sigma_{i}}   \right)  
\norm{y^{i}-y^{i+1}}^{2}.
\end{align*}
which immediately leads to 
\begin{align} \label{eq:theorem:pointsconvergence:tk}
(\forall (x,y) \in X \times Y) \quad \limsup_{k \to \infty}f(\hat{x}_{k}, y)   -f(x, \hat{y}_{k})  
\leq 0.
\end{align}
According to the definition \cref{eq:fspecialdefine} of $f$ and our assumptions therein, 
it is clear that $(\forall (x,y) \in X \times Y)$  $f(\cdot, y)$ and $-f(x, \cdot)$ 
are proper, convex, and  lower semicontinuous. Hence,
employing \cref{theorem:convergencef}\cref{theorem:convergencef:bounded} 
and \cref{eq:theorem:pointsconvergence:tk}, and applying 
\cref{lemma:clustersaddletkxy} with  $\left((x^{k},y^{k}) \right)_{k \in \mathbf{N}}$  being  
$\left((\hat{x}^{k},\hat{y}^{k}) \right)_{k \in \mathbf{N}}$,  we obtain the required results. 
	
\cref{theorem:pointsconvergence:clusterfinitedim}: 
We divide the proof into the following steps. 
	
\emph{Step~1}:
 Based on \cref{theorem:convergencef}\cref{theorem:convergencef:bounded}, 
 $\left((x^{k},y^{k}) \right)_{k \in \mathbf{N}}$  is a bounded sequence in a 
 finite-dimensional space. 
 So, due to  \cite[Lemma~2.45]{BC2017}, 
 there exists at least one cluster point $(x^{*}, y^{*})$ of $\left((x^{k}, y^{k}) \right)_{k 
 \in \mathbf{N}}$. 
 Moreover, bearing \cref{theorem:convergencef}\cref{theorem:convergencef:ineq}       
 and 	\cref{lemma:clustersaddle-point} in mind, 
 we observe that every cluster point of $\left((x^{k}, y^{k}) \right)_{k \in \mathbf{N}}$
  is a saddle-point of $f$.
	
\emph{Step~2}: In view of the result provided in Step~1 above, 
there exists one cluster point $(x^{*}, y^{*})$ 
(that is also a saddle-point of $f$) and one subsequence 
$\left( (x^{k_{i}},y^{k_{i}})\right)_{k \in \mathbf{N}}$   
of $\left((x^{k}, y^{k}) \right)_{k \in \mathbf{N}}$ such that 
\begin{align} \label{eq:theorem:pointsconvergence:convergentsequence:to}
		x^{k_{i}} \to x^{*} \quad \text{and} \quad y^{k_{i}} \to y^{*}.
\end{align}
Notice that the fact that  $(x^{*}, y^{*})$ is a saddle-point implies 
that $(\forall j \in \mathbf{N})$ $ f(x^{j},y^{*})-f(x^{*},y^{j}) \geq 0 $, 
by \cref{fact:saddlepoint}. 
Let $i \in \mathbf{N}$.
Apply  \cref{corollary:Ineqxy} with $x=x^{*}$, $y=y^{*}$, and $p=k_{i}>N$ 
with $k \geq k_{i}$  to observe that
\begin{subequations} \label{eq:theorem:pointsconvergence:convergentsequence:>=}
\begin{align}
&	\frac{1}{2 \tau_{k_{i}}} \norm{x^{*} -x^{k_{i}}}^{2} 
+\frac{1}{2 \sigma_{k_{i}}} \norm{y^{*} -y^{k_{i}}}^{2}  \\
		\geq &   \frac{1}{2 \tau_{k+1}} \norm{x^{*} -x^{k+1}}^{2} 
		+ \left( \frac{1}{2 \sigma_{k+1}}  
		- \frac{\norm{K}^{2}\tau_{k}}{2}  \right)  \norm{y^{*} -y^{k+1}}^{2}  
		-\sum^{k-1}_{j=k_{i}} \left( \norm{K} 
		- \frac{1}{2 \tau_{j}}   \right)  \norm{x^{j}-x^{j+1}}^{2} \\
		&-  \sum^{k}_{j=k_{i}} \left(  \norm{K} - \frac{1}{2 \sigma_{j}}   \right) 
		 \norm{y^{j}-y^{j+1}}^{2}
		- \norm{K}\norm{x^{k_{i}-1} -x^{k_{i}}}^{2}  
		-\abs{\innp{K(x^{k_{i}}-x^{k_{i}-1}), y^{k_{i}}-y^{*}}}\\
	\geq & \frac{1}{2 \tau_{k+1}} \norm{x^{*} -x^{k+1}}^{2} 
	+ \left( \frac{1}{2 \sigma_{k+1}}  
	- \frac{\norm{K}^{2}\tau_{k}}{2}  \right)  \norm{y^{*} -y^{k+1}}^{2}  \\
&	- \norm{K}\norm{x^{k_{i}-1} -x^{k_{i}}}^{2}  
-\abs{\innp{K(x^{k_{i}}-x^{k_{i}-1}), y^{k_{i}}-y^{*}}},
	\end{align}
\end{subequations}
where in the last inequality above we use that 
\[ 
	(\forall j \in \mathbf{N} \text{ with } j \geq k_{i} > N) 
	\quad \frac{1}{2 \tau_{j}} -\norm{K}    >0 
	\text{ and } \frac{1}{2 \sigma_{j}} - \norm{K}  >0.
\]
Take the convergence \cref{eq:theorem:pointsconvergence:convergentsequence:to}, 
$\lim_{k \to \infty} \norm{x^{k}-x^{k+1}} =0$ and 
$\lim_{k \to \infty} \norm{y^{k} -y^{k+1}} =0$ proved   in \cref{theorem:convergencef}\cref{theorem:convergencef:ineq},  
the boundedness of $((x^{k},y^{k}))_{k \in \mathbf{N}}$ 
proved in \cref{theorem:convergencef}\cref{theorem:convergencef:bounded},    
and the assumptions $0<\inf_{k \in \mathbf{N}} \sigma_{k}  $ 
and $0<\inf_{k \in \mathbf{N}} \tau_{k}  $ into account to deduce that 
\begin{subequations}\label{eq:theorem:pointsconvergence:convergentsequence}
	\begin{align}
		&\frac{1}{2 \tau_{k_{i}}} \norm{x^{*} -x^{k_{i}}}^{2} +\frac{1}{2 \sigma_{k_{i}}} 
		\norm{y^{*} -y^{k_{i}}}^{2} \to 0;\\  
		&  \norm{K}\norm{x^{k_{i}-1} -x^{k_{i}}}^{2}  
		+\abs{\innp{K(x^{k_{i}}-x^{k_{i}-1}), y^{k_{i}}-y^{*}}} \to 0.
	\end{align}
\end{subequations}
In view of 
\cref{eq:theorem:pointsconvergence:convergentsequence:>=} and 
\cref{eq:theorem:pointsconvergence:convergentsequence},
we have that
\begin{align}   \label{eq:theorem:pointsconvergence:convergentsequence:limsup}
\limsup_{k \to \infty}	  \frac{1}{2 \tau_{k+1}} \norm{x^{*} -x^{k+1}}^{2} 
+ \left( \frac{1}{2 \sigma_{k+1}}  
- \frac{\norm{K}^{2}\tau_{k}}{2}  \right)  \norm{y^{*} -y^{k+1}}^{2}  \leq 0.
\end{align}
On the other hand, according to \cref{lemma:infsup}, 
there exist $\bar{\tau} \in 
\mathbf{R}_{++}$ and $\bar{\sigma} \in \mathbf{R}_{++}$ such that 
for every  $k \in \mathbf{N}$ 
with $k \geq N$,
\begin{align}  \label{eq:theorem:pointsconvergence:clusterfinitedim:coefficients}
0<\frac{1}{2 \bar{\tau}} \leq\frac{1}{2 \tau_{k+1}} \quad \text{and} 
\quad 0<  \frac{1 }{2 \bar{\sigma}} - \frac{\norm{K}^{2} \bar{\tau}}{2} 
<	\frac{1}{2 \sigma_{k+1}}  - \frac{\norm{K}^{2}\tau_{k}}{2},
\end{align}
which implies that 
\begin{align} \label{eq:theorem:pointsconvergence:convergentsequence:liminf}
	\liminf_{k \to \infty}	  \frac{1}{2 \tau_{k+1}} \norm{x^{*} -x^{k+1}}^{2} 
	+ \left( \frac{1}{2 \sigma_{k+1}}  
	- \frac{\norm{K}^{2}\tau_{k}}{2}  \right)  \norm{y^{*} -y^{k+1}}^{2}  \geq 0.
\end{align}
Combine \cref{eq:theorem:pointsconvergence:convergentsequence:limsup}, \cref{eq:theorem:pointsconvergence:convergentsequence:liminf}, and \cref{eq:theorem:pointsconvergence:clusterfinitedim:coefficients} to yield that
\begin{align}  \label{eq:theorem:pointsconvergence:clusterfinitedim:step2}
\lim_{k \to \infty} \frac{1}{2 \tau_{k+1}} \norm{x^{*} -x^{k+1}}^{2}=0 
\quad \text{and} \quad
		\lim_{k \to \infty}	\left( \frac{1}{2 \sigma_{k+1}}  
		- \frac{\norm{K}^{2}\tau_{k}}{2}  \right)  \norm{y^{*} -y^{k+1}}^{2} =0.
\end{align}

\emph{Step~3}: Based on our assumptions, 
$(\forall k \in \mathbf{N})$ 
$0< \frac{1}{2 \tau_{k+1}} \leq \frac{1}{2 \inf_{i \in \mathbf{N} }\tau_{i+1}}
 <+\infty$ 
and $0<  \frac{1}{2 \sigma_{k+1}}  - \frac{\norm{K}^{2}\tau_{k}}{2}   \leq 
\frac{1}{2 \inf_{i \in \mathbf{N} }\sigma_{i}}
<+\infty$. 
This combined with  \cref{eq:theorem:pointsconvergence:clusterfinitedim:step2} 
proved in Step~2 above leads to
\[ 
		x^{k} \to x^{*} \quad \text{and} \quad y^{k} \to y^{*},
\]
	which, connecting with our result proved in Step~1 above, ensures the required result. 
	\end{proof}

\section{Numerical experiments}\label{section:NumericalExperiments}
 
 In this section, unless stated otherwise, $\left((x^{k}, y^{k})\right)_{0 \leq k \leq N}$    
 is generated by our iterate scheme \cref{eq:lemma:algorithmsymplity} with $N$ being 
 the total number of iterates.
  Set
\[ 
 	(\forall k \in \mathbf{N})\quad (\hat{x}_{k},\hat{y}_{k})
	=\left(\frac{1}{k+1} \sum^{k+1}_{i=1} x^{i},  \frac{1}{k+1} \sum^{k+1}_{i=1} y^{i}\right).
\]
As we did in \cref{section:ConvergenceResults}, here
we consider also  the convex-concave function in the form of 
\begin{align}  \label{eq:num:fspecialdefine}
	(\forall (x,y) \in X \times Y) \quad 	f(x,y) =\innp{Kx, y} + g(x)  -h(y),
\end{align} 
where $K \in \mathcal{B}(\mathcal{H}_{1} , \mathcal{H}_{2} )$, 
and $g: \mathcal{H}_{1} \to \mathbf{R} \cup \{ +\infty\}$ and 
$h: \mathcal{H}_{2} \to  \mathbf{R} \cup \{ +\infty\}$  
are proper, convex, and lower semicontinuous. 

We denote a saddle-point of $f$ as $(x^{*}, y^{*})$.
In \cref{theorem:convergencef}\cref{theorem:convergencef:fktohat}, we 
showed the convergence of  $f(\hat{x}_{k}, \hat{y}_{k}) \to f(x^{*}, y^{*})  $.
Moreover, 
in \cref{theorem:pointsconvergence}\cref{theorem:pointsconvergence:clusterfinitedim}, 
we proved that $\left((x^{k},y^{k}) \right)_{k \in \mathbf{N}}$ converges to a 
saddle-point of $f$. 
So if only $g$ and $h$ in \cref{eq:num:fspecialdefine} are 
continuous, then we have $f(x^{k},y^{k})  \to f(x^{*}, y^{*})$.
To verify \cref{theorem:convergencef,theorem:pointsconvergence}, 
in our experiments  below,
we consider parameters used in scheme \cref{eq:lemma:algorithmsymplity} as follows:
\[ 
(\forall k \in \mathbf{N}) \quad \tau_{k} =\sigma_{k} = \frac{1}{4 \norm{K}} \left(1 - 
\frac{1}{k+1}\right) \text{ and } 1- \alpha_{k} =\beta_{k} = 2 \sqrt{\frac{1}{k(k+1)}}.
\]
This setting implies $(\forall k \in \mathbf{N})$ $1- \alpha_{k} = \beta_{k} =  
\sqrt{\frac{1}{\norm{K}} \left( \frac{1}{\tau_{k}}  -\frac{1}{\tau_{k+1}}   \right)}  
=\sqrt{\frac{1}{\norm{K}} \left( \frac{1}{\sigma_{k}}  -\frac{1}{\sigma_{k+1}}   \right)}  $ 
and $\lim_{k \to \infty} \tau_{k} =\lim_{k \to \infty} \sigma_{k} = \frac{1}{4 \norm{K}}$, 
which are required in our convergence proofs.

As we mentioned before, in \cite[Theorem~1]{ChambollePock2011}, the authors 
considered the convergence of iterates $((x^{k},y^{k}))_{k \in \mathbf{N}}$ 
generated by the iterate scheme 
\cref{eq:lemma:algorithmsymplity} with $(\forall k \in \mathbf{N})$ $\sigma_{k} \equiv 
\sigma $, $\tau_{k} \equiv \tau $, $\alpha_{k} \equiv 1$, and $\beta_{k} \equiv 0$ 
under the assumption of  $\tau \sigma \norm{K}^{2} <1$. 
Moreover, in \cite[Section~5]{ChambollePock2011}, 
under some assumptions on the uniformly convexity of $g$ or $h$ in  
\cref{eq:num:fspecialdefine}, 
the authors studied the acceleration of the
convergence of iterates $((x^{k},y^{k}))_{k \in \mathbf{N}}$ generated by 
the scheme \cref{eq:lemma:algorithmsymplity} associated
with $(\forall k \in \mathbf{N})$  $\beta_{k} \equiv 0$,
$\alpha_{k} \in [0,1]$ or $\alpha_{k} \equiv \alpha $ in a certain range with 
a upper bound $1$, 
and some other constraints on the involved parameters
 $((\alpha_{k}, \tau_{k},\sigma_{k}))_{k \in \mathbf{N}}$. 
 In particular, their assumptions imply that $\tau \sigma \norm{K}^{2} \leq 1$ 
or $(\forall k \in \mathbf{N})$ $\tau_{k} \sigma_{k} \norm{K}^{2} \leq 1$. 

Motivated by \cite{ChambollePock2011}, in our numerical experiments, we will also 
consider  iterate schemes associated
with constant parameters and compare these iterate schemes
with ours. 

\subsection{Matrix game} \label{subsection:matrixgame}
In this subsection, we consider an easy version of the game interpretation of 
saddle-point problems  
(see, e.g., \cite[Section~5.4.3]{BV2004} for details).  The subject function in this case 
is  defined as 
\[ 
f:  X \times  Y \to \mathbf{R} :  (x,y) \mapsto f(x,y) = x^{T} Cy,
\]
where $C \in \mathbf{R}^{m \times n}$, and  $X \subseteq \mathbf{R}^{n}$ and $Y 
\subseteq \mathbf{R}^{m}$ are closed and convex.
Now, we have $K=C^{T}$, $g=0$, and $h=0$ in \cref{eq:num:fspecialdefine}.

We borrow the example of matrix game in 
\cite[Sections~5.2 and 5.3]{SLB2023} and set 
\begin{align*}
	&C = \begin{pmatrix}
		1 &2\\
		3&1
	\end{pmatrix},\\
	&X:= \{ x \in \mathbf{R}^{2} ~:~ \sum^{2}_{i=1} x_{i} =1 \text{ and }(\forall i \in \{1,2\})~ 
	x_{i} 
	\geq 0  \}, \text{ and}\\
	&Y:= \{ y \in \mathbf{R}^{2} ~:~ \sum^{2}_{i=1} y_{i} =1 \text{ and }(\forall i \in \{1,2\}) 
	~y_{i} 
	\geq 0  \}.
\end{align*}
According to \cite[Section~5.3]{SLB2023}, the optimal value (that is the value of $f$ 
over the saddle-point) is $1.6667$ with keeping 4 decimal places.
(This example is also used in our \cite[Section~4.4]{Oy2023subgradient}.)

In our experiment associated with 
\cref{fig: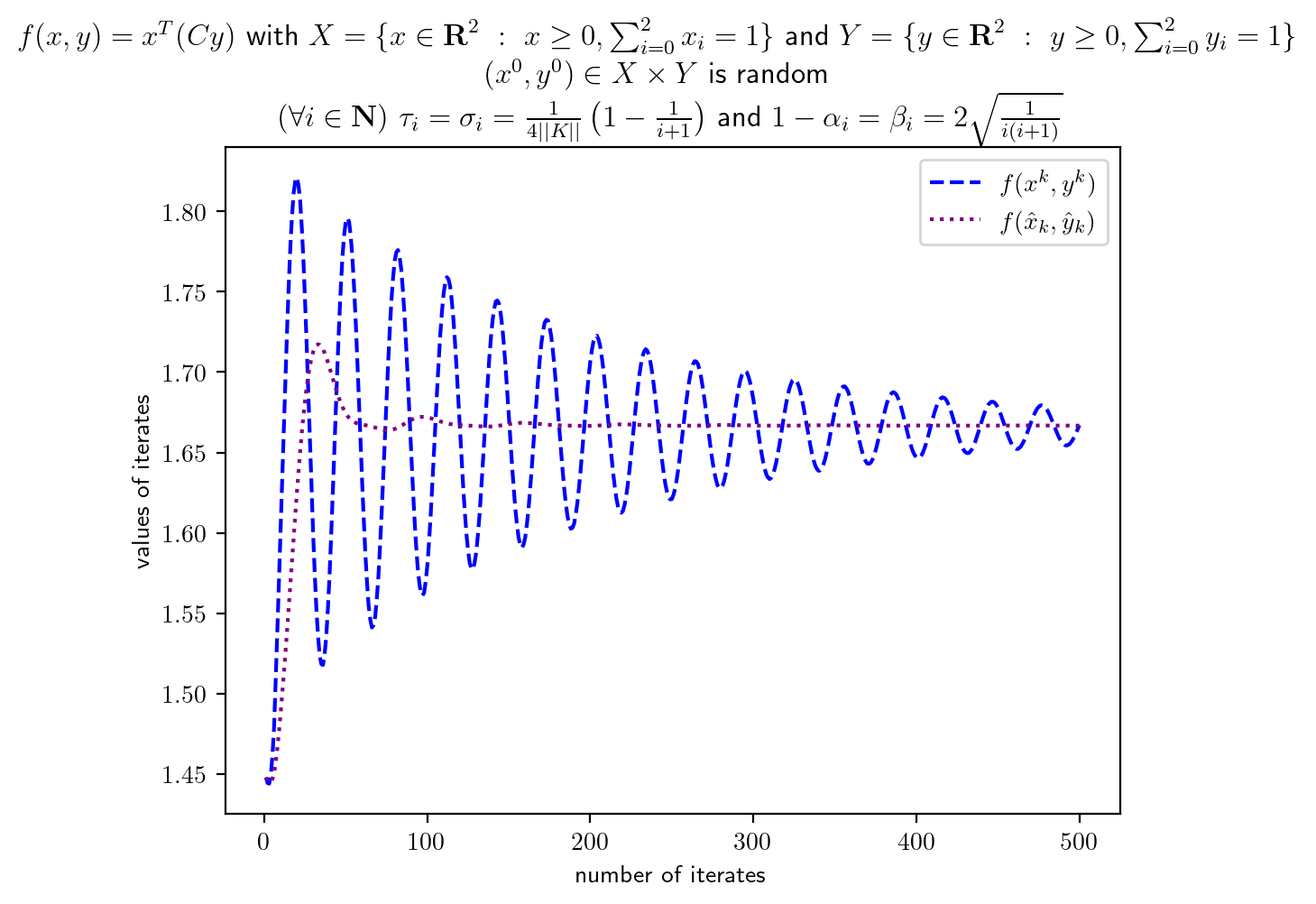},  
we calculate  $\left(  f(x^{k}, y^{k})  
\right)_{1 \leq k \leq 500}$ and $\left(  f(\hat{x}_{k},\hat{y}_{k}) \right)_{1 \leq k \leq 
	500}$ 
with a random initial point $(x^{0},y^{0}) \in X \times Y$ and with $(\forall k \in 
\mathbf{N})$ 
$\tau_{k} =\sigma_{k} = \frac{1}{4 \norm{K}} \left(1 - \frac{1}{k+1}\right)$
and  $1- \alpha_{k} =\beta_{k} = 2 \sqrt{\frac{1}{k(k+1)}}$.  
Clearly, the convergence result displayed in 
\cref{fig:appmg_proximity_methods_random_00.png} 
validates our theoretical results shown in 
\cref{theorem:convergencef,theorem:pointsconvergence}. 
Moreover, it is clear from \cref{fig:appmg_proximity_methods_random_00.png} that
$\left(  f(\hat{x}_{k},\hat{y}_{k}) \right)_{1 \leq k \leq 	500}$ 
converges faster than $\left(  f(x^{k}, y^{k})  \right)_{1 \leq k \leq 500}$ in this case.
\begin{figure}[H]
	\centering
	\includegraphics[width=0.6\textwidth]{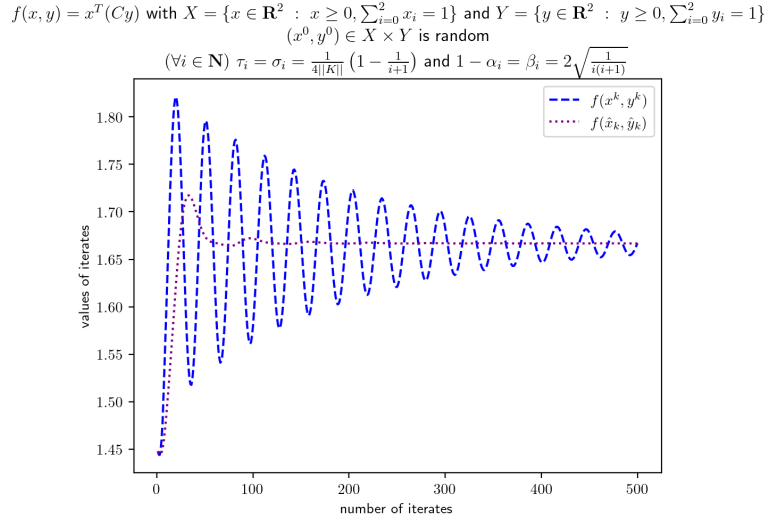}
	\caption{Convergence of matrix game with a random initial point}
	\label{fig:appmg_proximity_methods_random_00.png}
\end{figure}

Motivated by theoretical results in  \cite{ChambollePock2011}, 
we consider $(\forall k \in \mathbf{N})$  $\alpha_{k} \equiv 1$,  $\beta_{k} \equiv 0$, 
$\sigma_{k} \equiv 
\frac{1}{\eta \norm{K}} $, and $\tau_{k} \equiv  
\frac{1}{\eta \norm{K}} $ with $\eta \geq 1$ 
to satisfy $(\forall k \in \mathbf{N})$ $\tau_{k} \sigma_{k} \norm{K}^{2} \leq 1$. We 
randomly choose an initial point  $(x^{0},y^{0}) \in X \times Y$ and 
calculate $\left(  f(x^{k}, y^{k})  \right)_{1 \leq k \leq 500}$ 
and $\left(  f(\hat{x}_{k},\hat{y}_{k}) \right)_{1 \leq k \leq 	500}$
for each group of constant involved parameters. 
One result is presented in  \cref{fig:appmg_proximity_methods_2_00.png} 
in which we show also $\left(  f(x^{k}, y^{k})  \right)_{1 \leq k \leq 500}$ and 
$\left(  f(\hat{x}_{k},\hat{y}_{k}) \right)_{1 \leq k \leq 500}$ 
with $(\forall k \in \mathbf{N})$ $\tau_{k} =\sigma_{k} = \frac{1}{4 \norm{K}} \left(1 - 
\frac{1}{k+1}\right)$ and $1- \alpha_{k} =\beta_{k} = 2 \sqrt{\frac{1}{k(k+1)}}$ 
as a reference. 
In view of \cref{fig:appmg_proximity_methods_2_00.png}, when 
$(\forall k \in \mathbf{N})$  $(\alpha_{k}, \beta_{k}) \equiv (1,0)$ and $\eta =5$,
neither  $\left(  f(x^{k}, y^{k})  \right)_{1 \leq k \leq 500}$ nor
$\left(  f(\hat{x}_{k},\hat{y}_{k}) \right)_{1 \leq k \leq 500}$ converges to the required 
value; 
when $(\alpha_{k}, \beta_{k}) \equiv (1,0)$ and $\eta =100$, both
$\left(  f(x^{k}, y^{k})  \right)_{1 \leq k \leq 500}$ and
$\left(  f(\hat{x}_{k},\hat{y}_{k}) \right)_{1 \leq k \leq 500}$
show little convergent performance in 500 iterates;
when $(\alpha_{k}, \beta_{k}) \equiv (1,0)$ and $\eta =10$,
although $\left(  f(\hat{x}_{k},\hat{y}_{k}) \right)_{1 \leq k \leq 500}$ converges, 
we could not see  clear convergence of $\left(  f(x^{k}, y^{k})  \right)_{k \in \mathbf{N}}$
 in $500$ iterates.
Based on \cref{fig:appmg_proximity_methods_2_00.png}, 
$\left(  f(\hat{x}_{k},\hat{y}_{k}) \right)_{1 \leq k \leq 500}$ performs better than
$\left(  f(x^{k}, y^{k})  \right)_{1 \leq k \leq 500}$ in all cases; 
our iterate scheme is a reasonable choice if we are unsure which constants should 
be used for parameters in iterate schemes. 
\begin{figure}[H]
	\centering
	\includegraphics[width=0.8\textwidth]{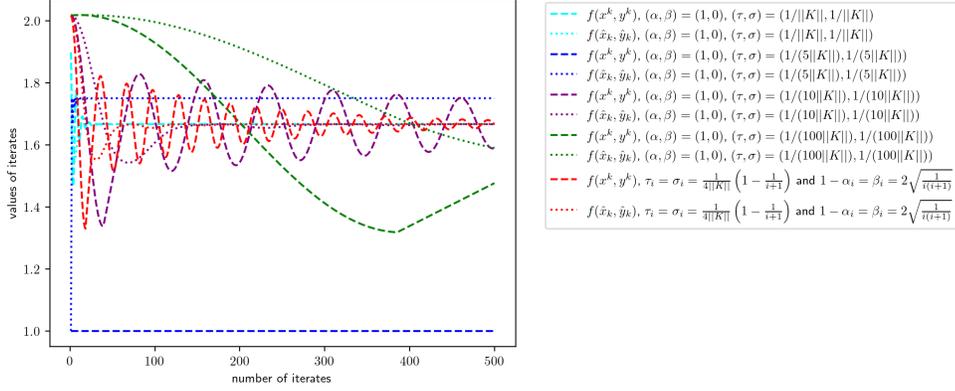}
	\caption{Comparison with different $((\alpha_{k}, \beta_{k}, \tau_{k},\sigma_{k}))_{k 
			\in \mathbf{N}}$}
	\label{fig:appmg_proximity_methods_2_00.png}
\end{figure} 

In addition, we also consider $(\forall k \in \mathbf{N})$  $(\alpha_{k}, \beta_{k}) = 
(1,0)$, $(\alpha_{k}, \beta_{k}) = (0.1,0)$, and $(\alpha_{k}, \beta_{k}) = (2,0)$. 
After choosing a random initial point  $(x^{0},y^{0}) \in X \times Y$, 
we  calculate $\left(  f(x^{k}, y^{k})  \right)_{k \in \mathbf{N}}$ and 
$\left(  f(\hat{x}_{k},\hat{y}_{k}) \right)_{k \in \mathbf{N}}$ 
for $(\forall k  \in \mathbf{N})$   $(\tau_{k},\sigma_{k} ) =  
(\frac{1}{\norm{K}},\frac{1}{\norm{K}} )$,  $(\tau_{k},\sigma_{k} ) =  (\frac{1}{ 0.1 
	\norm{K}},\frac{1}{0.1 \norm{K}} )$, and $(\tau_{k},\sigma_{k} ) =  
(\frac{1}{10 \norm{K}},\frac{1}{10 \norm{K}} )$.
The first subplot of \cref{fig:appmg_proximity_methods_alphabeta_es_00.png} 
tells us that 
when $(\forall k \in \{1, \ldots, 50 \})$  $(\tau_{k},\sigma_{k} ) =  (\frac{1}{ 0.1 
	\norm{K}},\frac{1}{0.1 \norm{K}} )$, 
neither $\left(  f(x^{k}, y^{k})  \right)_{k \in \mathbf{N}}$ nor
$\left(  f(\hat{x}_{k},\hat{y}_{k}) \right)_{k \in \mathbf{N}}$ converges to the required 
value for our three choices of $((\alpha_{k},\beta_{k}))_{k \in \mathbf{N}}$.   
This is not a big surprise because in this case 
$(\forall k \in \{1, 2, \ldots, 50\})$ $\tau_{k}\sigma_{k} \norm{K}^{2} =100 >1$, 
which doesn't satisfy the requirement of convergence in 
\cite[Theorem~1]{ChambollePock2011}.
We zoom in the third subplot of  
\cref{fig:appmg_proximity_methods_alphabeta_es_00.png} in the range of iterate 
numbers in $[1,100]$ 
and get the second subplot of 
\cref{fig:appmg_proximity_methods_alphabeta_es_00.png}.  
In view of the last two subplots of  
\cref{fig:appmg_proximity_methods_alphabeta_es_00.png}, 
we observe that when $(\forall k  \in \mathbf{N})$   $(\tau_{k},\sigma_{k} ) =  
(\frac{1}{\norm{K}},\frac{1}{\norm{K}} )$  and $(\tau_{k},\sigma_{k} ) =  
(\frac{1}{10 \norm{K}},\frac{1}{10 \norm{K}} )$, 
for our three choices of $((\alpha_{k},\beta_{k}))_{k \in \mathbf{N}}$, 
although some sequences converge very slowly, 
it seems that they will all be close enough to the desired optimal value after a large 
enough  number  of iterates. 
In addition, in view of \cref{fig:appmg_proximity_methods_alphabeta_es_00.png}, 
$\left(  f(\hat{x}_{k},\hat{y}_{k}) \right)_{k \in \mathbf{N}}$ converges faster than 
$\left(  f(x^{k}, y^{k})  \right)_{k \in \mathbf{N}}$ in all cases for this particular example,
which is the opposite situation of our examples below.
In addition, we notice that changes of $(\alpha, \beta)$
do affect the convergence rate of $\left(  f(x^{k}, y^{k})  \right)_{1 \leq k \leq 5000}$ 
and 
$\left(  f(\hat{x}_{k},\hat{y}_{k}) \right)_{1 \leq k \leq 5000}$ in 
\cref{fig:appmg_proximity_methods_alphabeta_es_00.png}, 
which is different from the situation of our examples below.

\begin{figure}[H]
	\centering
	\subfloat{{\includegraphics[width=12cm]{appmg_proximity_methods_alphabeta_es_00.png}
	}}%
	\qquad
	\subfloat{{\includegraphics[width=12cm]{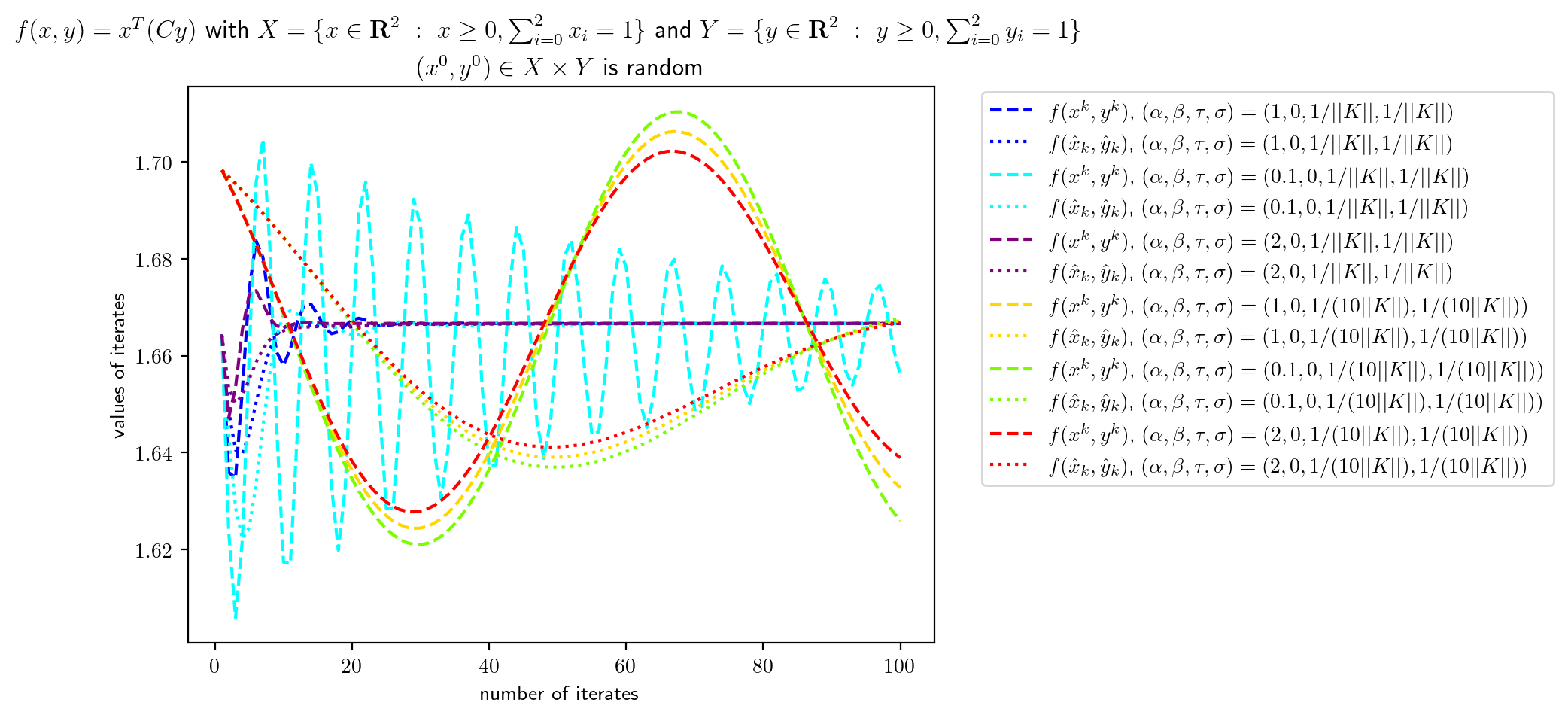}
	}}%
	\qquad
	\subfloat{{\includegraphics[width=12cm]{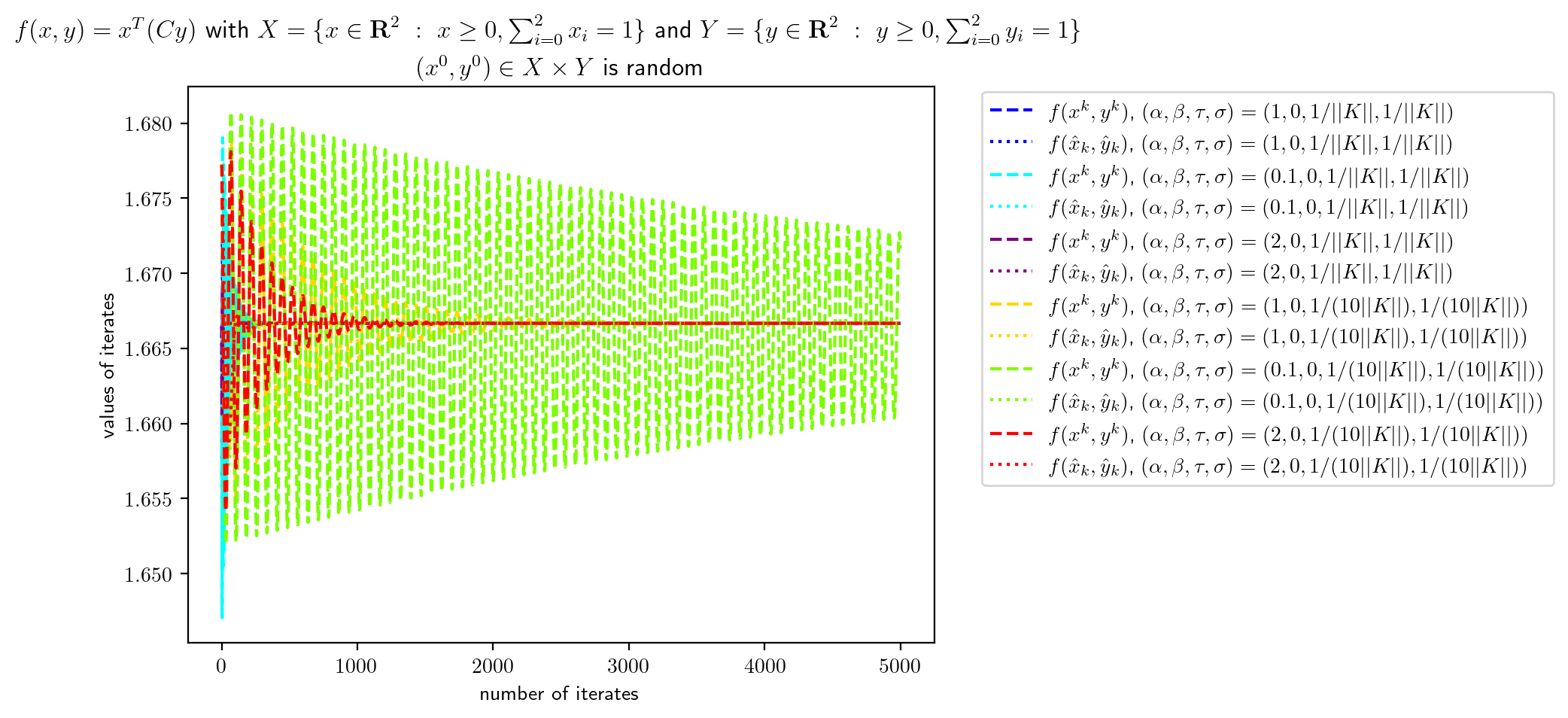}
	}}%
	\caption{Parameters $((\alpha_{k}, \beta_{k}, \tau_{k},\sigma_{k}))_{k 
			\in \mathbf{N}}$ are constants}
	\label{fig:appmg_proximity_methods_alphabeta_es_00.png}
\end{figure}

\subsection{Linear program in inequality form}
Consider $A \in \mathbf{R}^{m \times n}$,  $b  \in \mathbf{R}^{m}$,  
and $c \in \mathbf{R}^{n}$. We work on the Lagrangian  
\[ 
f : \mathbf{R}^{n}  \times \mathbf{R}^{m}_{+}  \to \mathbf{R}: 
(x,y) \mapsto  y^{T}Ax 	+c^{T}x   -b^{T}y
\]
of the inequality form LP (see, e.g., \cite[Example~1.2(ii)]{Oy2023subgradient} for 
details).

 In our experiments, after randomly choosing $A \in \mathbf{R}^{100 \times 10}$, $b \in 
\mathbf{R}^{100}$, and $c \in \mathbf{R}^{10}$, 
we employ the Python-embedded modeling language CVXPY (see 
\cite{DiamondBoyd2016} for details) 
to find bounded and feasible problems. 
Note that the value $f(x^{*}, y^{*})$ of $f$  over a saddle-point $(x^{*}, y^{*})$ equals 
to the optimal value of associated primal and dual problems. 
So  we also apply CVXPY to find the optimal solution  of  associated  primal and dual 
problems to check the correctness of our calculation.

After finding problems with optimal solutions by CVXPY, 
we randomly choose initial points $(x^{0},y^{0}) \in \mathbf{R}^{10} \times \mathbf{R}^{100}$ 
and calculate  the sequences $\left(  f(x^{k}, y^{k})  \right)_{1 \leq k \leq 1000}$ and 
$\left(  f(\hat{x}_{k},\hat{y}_{k}) \right)_{1 \leq k \leq 1000}$  
with $(\forall k \in \mathbf{N})$ 
$\tau_{k} =\sigma_{k} = \frac{1}{4 \norm{K}} \left(1 - \frac{1}{k+1}\right)$
and  $1- \alpha_{k} =\beta_{k} = 2 \sqrt{\frac{1}{k(k+1)}}$. 
In our experiments, both $\left(  f(x^{k}, y^{k})  \right)_{1 \leq k \leq 1000}$ and 
$\left(  f(\hat{x}_{k},\hat{y}_{k}) \right)_{1 \leq k \leq 1000}$
converge to the optimal value obtained from CVXPY. 
In view of our result presented in 
\cref{fig: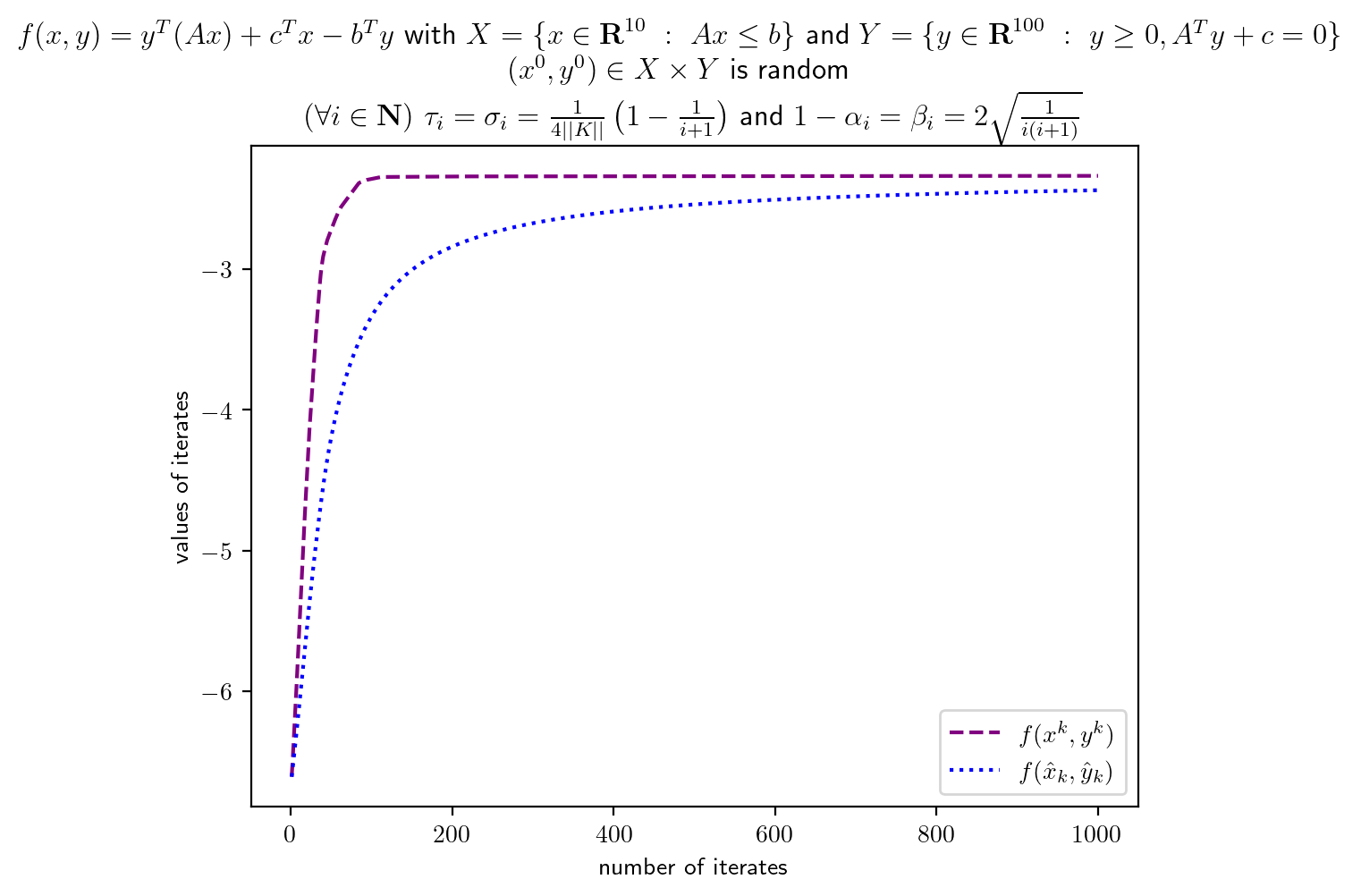},
we confirm our theoretical results of  
\cref{theorem:convergencef,theorem:pointsconvergence}
and also observe that $\left(  f(x^{k}, y^{k})  \right)_{1 \leq k \leq 1000}$ performs better 
than $\left(  f(\hat{x}_{k},\hat{y}_{k}) \right)_{1 \leq k \leq 1000}$.
 
\begin{figure}[H]
	\centering
	\includegraphics[width=0.6\textwidth]{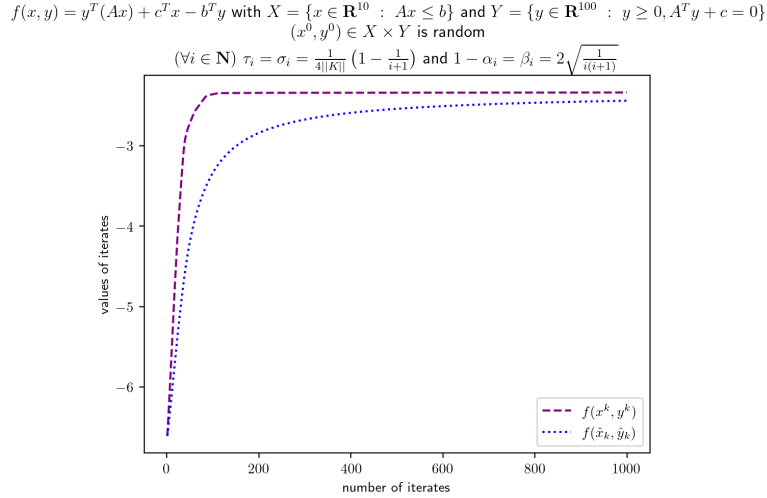}
	\caption{Convergence result with a random initial point}
	\label{fig:proximal_point_algorithm_lp_nocomparison_00.png}
\end{figure} 
 
Motivated by \cite[Theorem~1]{ChambollePock2011},
we consider $((\alpha_{k}, \beta_{k},\tau_{k}, \sigma_{k}))_{k \in \mathbf{N}}
 =\left(\left(1,0,\tfrac{1}{\eta \norm{K}}, \tfrac{1}{\eta \norm{K}}\right)\right)_{k \in 
 \mathbf{N}}$ 
 and try exploring  the influence of $\eta$  
 to the convergence rate  of sequences $\left(  f(x^{k}, y^{k})  \right)_{k \in 
 \mathbf{N}}$ and $\left(  f(\hat{x}_{k},\hat{y}_{k}) \right)_{k \in \mathbf{N}}$.
 In our related experiments, we also calculate $\left(  f(x^{k}, y^{k})  \right)_{k \in 
 \mathbf{N}}$ and $\left(  f(\hat{x}_{k},\hat{y}_{k}) \right)_{k \in \mathbf{N}}$
 with $(\forall k \in \mathbf{N})$ 
 $\tau_{k} =\sigma_{k} = \frac{1}{4 \norm{K}} \left(1 - \frac{1}{k+1}\right)$
 and  $1- \alpha_{k} =\beta_{k} = 2 \sqrt{\frac{1}{k(k+1)}}$ as a reference. 
  According to \cref{fig:proximal_point_algorithm_lp_tausigma_00.png}, 
  we observe that $\left(  f(x^{k}, 
  y^{k})  \right)_{1 \leq k \leq 1000}$ converges faster than $\left(  
  f(\hat{x}_{k},\hat{y}_{k}) \right)_{1 \leq k \leq 1000}$ in this case again 
  and that the smaller the value of $\eta$, 
  the better the convergence rate obtained by the corresponding 
  sequences of iterates. 
 Recall from \cref{subsection:matrixgame} that it is not  the case generally. 
  In practice, it is not easy to find satisfactory constants for the involved parameters.
However, from \cref{fig:proximal_point_algorithm_lp_tausigma_00.png},
  we see that our iterate scheme is a reasonable choice. 
  
  Notice that based on assumptions of 
  \cref{theorem:convergencef,theorem:pointsconvergence},  we have that when 
  $(\tau_{k})_{k \in \mathbf{N}} $ and $(\sigma_{k})_{k \in \mathbf{N}} $ are 
  sequences of constants, either $\norm{K} =0$ or $\left(( \alpha_{k}, \beta_{k}) 
  \right)_{k \in \mathbf{N}}=\left( (1,0) \right)_{k \in \mathbf{N}}$; 
  and we also require  
$\norm{K} \limsup_{i \to \infty} \tau_{i} < \frac{1}{2}$ and $\norm{K} \limsup_{i \to \infty} 
\sigma_{i} < \frac{1}{2}$. 
Furthermore, in \cite[Theorem~1]{ChambollePock2011}, 
it is required that $\tau \sigma \norm{K}^{2} <1$.
So,  to the best of our knowledge, 
 the convergence associated with $\eta < 1$   
 does not have any theoretical support as of now.

 \begin{figure}[H]
 	\centering
 	\includegraphics[width=0.8\textwidth]{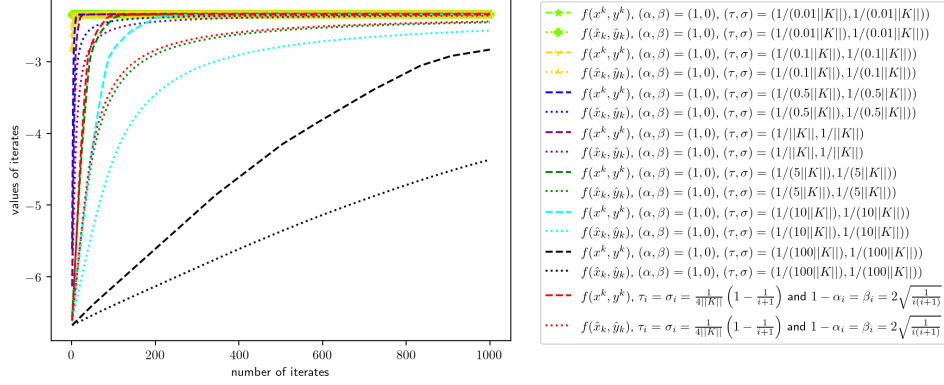}
 	\caption{Comparison of  different  
 		$(\tau_{k})_{k \in \mathbf{N}}$ and  $(\sigma_{k})_{k \in \mathbf{N}}$}
 	\label{fig:proximal_point_algorithm_lp_tausigma_00.png}
 \end{figure}

Additionally,  
we consider $((\alpha_{k}, \beta_{k},\tau_{k}, \sigma_{k}))_{k \in \mathbf{N}}
=\left( \left(\alpha, \beta,\tfrac{1}{\eta \norm{K}}, \tfrac{1}{\eta \norm{K}}\right) \right)_{k 
\in 
\mathbf{N}}$ and fix $\eta$ as $0.01$, $0.1$, $0.5$, $1$, $10$, and $100$.
For each value of $\eta$, we explore the influence of $(\alpha, \beta)$ to the  
convergence rate  of sequences
$\left(  f(x^{k}, y^{k})  \right)_{k \in 	\mathbf{N}}$ and $\left(  
f(\hat{x}_{k},\hat{y}_{k}) \right)_{k \in \mathbf{N}}$.
We present one typical result  in  
\cref{fig:proximal_point_algorithm_lp_alphabetatausigma_00.png} and 
conclude that, in this example,
  $\left(  f(x^{k}, y^{k})  \right)_{k \in \mathbf{N}}$ performs better than 
$\left(  f(\hat{x}_{k},\hat{y}_{k}) \right)_{k \in \mathbf{N}}$ 
and that the sequences $\left(  f(x^{k}, y^{k})  \right)_{k \in \mathbf{N}}$ and $\left(  
f(\hat{x}_{k},\hat{y}_{k}) \right)_{k \in \mathbf{N}}$ are independent of the change 
of $(\alpha, \beta)$.

\begin{figure}[H]
\centering
\includegraphics[width=0.6\textwidth]
{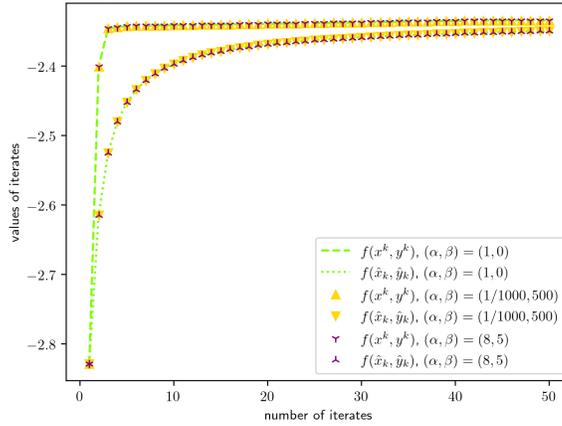}
\caption{Comparison of   different  
$(\alpha_{k})_{k \in \mathbf{N}}$ and  $(\beta_{k})_{k \in \mathbf{N}}$}
\label{fig:proximal_point_algorithm_lp_alphabetatausigma_00.png}
\end{figure}

\subsection{Least-squares problem with \texorpdfstring{$\ell_{1}$}{l} 
regularization} 
\label{subsubsection:example:lsl1}
Let $A \in \mathbf{R}^{m \times n}$, $b  \in \mathbf{R}^{m}$, and $\gamma \in 
\mathbf{R}_{++}$.  
Below we consider the Lagrangian  
\[ 
f: \mathbf{R}^{n+m} \times \mathbf{R}^{m} \to \mathbf{R}
: ((x,u),y) \mapsto
  f((x,u),y) = \frac{1}{2} \norm{u}^{2}_{2} + \gamma \norm{x}_{1} + y^{T}( Ax -b -u)
\] 
of a least-squares problem with $\ell_{1}$ regularization (see, e.g., 
\cite[Example~1.2(iii)]{Oy2023subgradient} for details). 

In our experiments, we set $\gamma =1$,
$X =\mathbf{R}^{100+50}$, and $Y = \{ y \in \mathbf{R}^{100} ~:~ 
\norm{A^{T}y}_{\infty} \leq \gamma  \}$.
Then we randomly choose $A \in \mathbf{R}^{100 \times 50}$ and $b \in 
\mathbf{R}^{100}$.
Note that in this case the problem is always feasible and bounded. 
We apply CVXPY  to solve related primal and dual problems as a reference later. 
We randomly choose $6$ initial points $(x^{0},y^{0}) \in X  \times Y$ and,
for each initial point, we 
calculate  the sequences $\left(  f(x^{k}, y^{k})  \right)_{1 \leq k \leq 2000}$ and 
$\left(  f(\hat{x}_{k},\hat{y}_{k}) \right)_{1 \leq k \leq 2000}$ with $(\forall k \in 
\mathbf{N})$ $\tau_{k} =\sigma_{k} = \frac{1}{4 \norm{K}} \left(1 - 
\frac{1}{k+1}\right)$  and $1- \alpha_{k} =\beta_{k} = 2 \sqrt{\frac{1}{k(k+1)}}$. 
In our experiments, both $\left(  f(x^{k}, y^{k})  \right)_{k \in \mathbf{N}}$ and 
$\left(  f(\hat{x}_{k},\hat{y}_{k}) \right)_{k \in \mathbf{N}}$
converge to the optimal value obtained from CVXPY after a large enough number of
iterates. 
From \cref{fig:proximal_point_algorithm_lsl1_nocomparison_00.png},
we see that $\left(  f(x^{k}, y^{k})  \right)_{1 \leq k \leq 2000}$ beats
$\left(  f(\hat{x}_{k},\hat{y}_{k}) \right)_{1 \leq k \leq 2000}$ in this example as well. 

\begin{figure}[H]
\centering
\includegraphics[width=0.6\textwidth]
{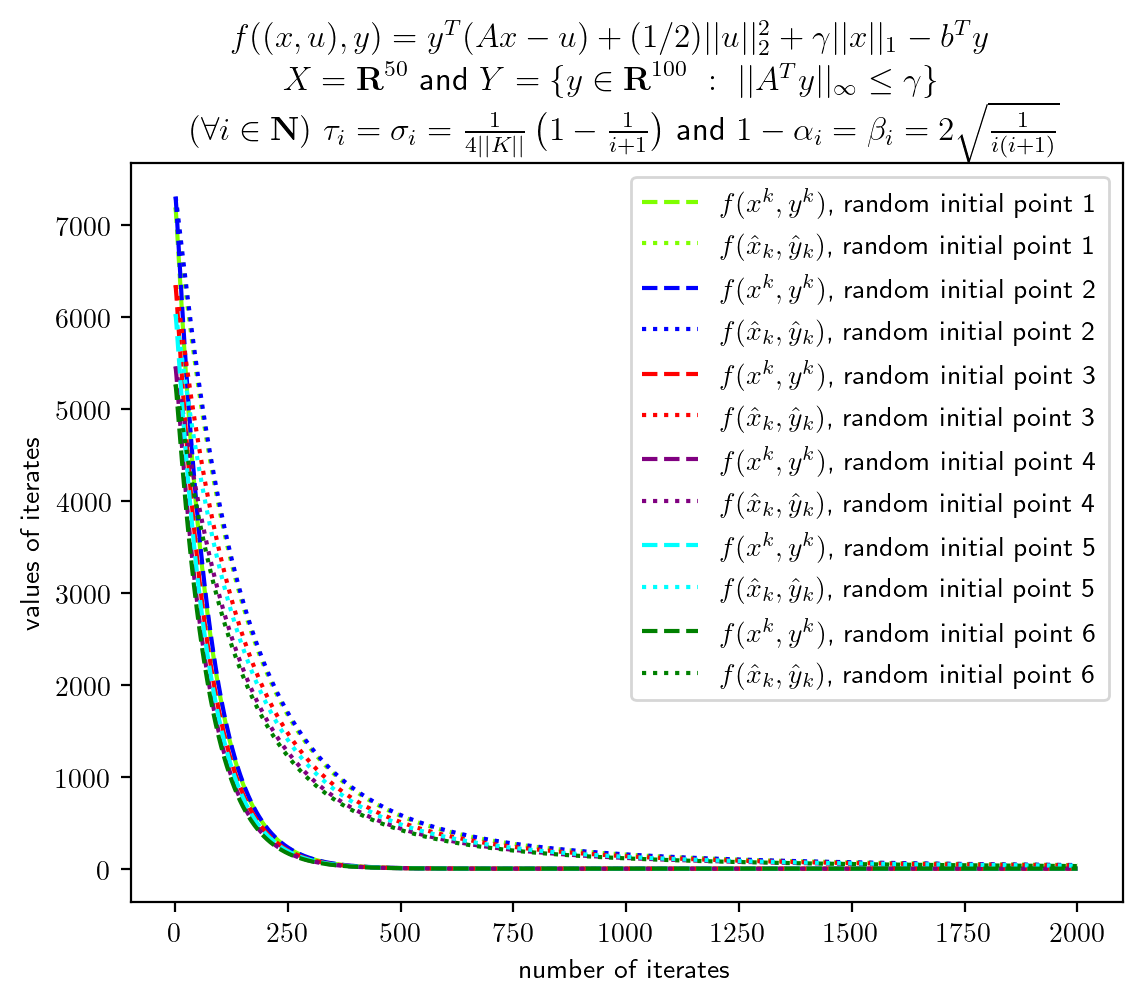}
\caption{Convergence  with   random initial points}
\label{fig:proximal_point_algorithm_lsl1_nocomparison_00.png}
\end{figure}

For this example, 
we also consider $((\alpha_{k}, \beta_{k},\tau_{k}, \sigma_{k}))_{k \in \mathbf{N}}
=\left(1,0,\tfrac{1}{\eta \norm{K}}, \tfrac{1}{\eta \norm{K}}\right)_{k \in \mathbf{N}}$ 
and explore  the influence of $\eta$  
to the convergence rate of sequences $\left(  f(x^{k}, y^{k})  \right)_{k \in 
\mathbf{N}}$ and $\left(  f(\hat{x}_{k},\hat{y}_{k}) \right)_{k \in \mathbf{N}}$.
In our related experiments, we also calculate $\left(  f(x^{k}, y^{k})  \right)_{k \in 
\mathbf{N}}$ and $\left(  f(\hat{x}_{k},\hat{y}_{k}) \right)_{k \in \mathbf{N}}$
with $(\forall k \in \mathbf{N})$ 
$\tau_{k} =\sigma_{k} = \frac{1}{4 \norm{K}} \left(1 - \frac{1}{k+1}\right)$
and  $1- \alpha_{k} =\beta_{k} = 2 \sqrt{\frac{1}{k(k+1)}}$ as a reference. 
According to \cref{fig:proximal_point_algorithm_lsl1_tausigma_00.png}, 
we observe that $\left(  f(x^{k}, y^{k})  \right)_{1 \leq k \leq 1000}$ 
converges faster than $\left(  
f(\hat{x}_{k},\hat{y}_{k}) \right)_{1 \leq k \leq 1000}$ 
and that as $\eta$  decreases, 
the convergence rate of the corresponding sequences of iterates improves.
In this example, we see that our iterate scheme with non-constant involved parameters
is also a favorable choice when  the optimal $\eta$ is unknown.
\begin{figure}[H]
	\centering
	\includegraphics[width=0.8\textwidth]{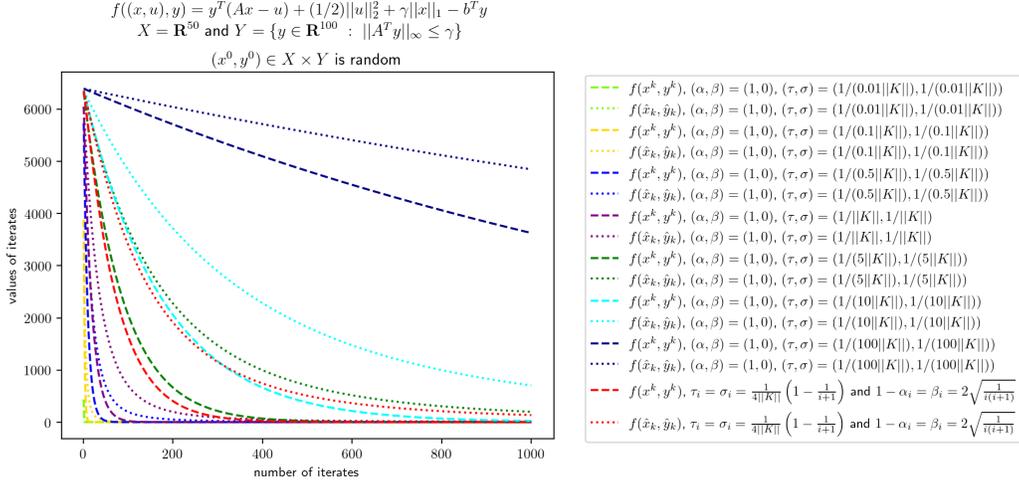}
	\caption{Convergence  with different  $((\alpha_{k}, \beta_{k},\tau_{k}, 
	\sigma_{k}))_{k \in \mathbf{N}}$}
	\label{fig:proximal_point_algorithm_lsl1_tausigma_00.png}
\end{figure}

In view of \cref{fig:proximal_point_algorithm_lsl1_alphabetatausigma_00.png}, 
when we fix  $(\tau, \sigma) = (\tfrac{1}{0.01\norm{K}},\tfrac{1}{0.01\norm{K}})$, 
neither $\left(  f(x^{k}, y^{k})  \right)_{1 \leq k \leq 50}$ nor $\left(  
f(\hat{x}_{k},\hat{y}_{k}) \right)_{1 \leq k \leq 50}$ is affected by the change of 
$(\alpha, \beta)$. 
(We also set $(\tau, \sigma) = 
(\tfrac{1}{\norm{K}},\tfrac{1}{\norm{K}})$,  $(\tau, \sigma) = 
(\tfrac{1}{0.1\norm{K}},\tfrac{1}{0.1\norm{K}})$,  
and other values, but,  in none of our experiments, the change of $(\alpha, \beta)$ 
affects 
$\left(  f(x^{k}, y^{k})  \right)_{1 \leq k \leq 50}$ or $\left(  f(\hat{x}_{k},\hat{y}_{k}) 
\right)_{1 \leq k \leq 50}$). 
In addition, $\left(  f(x^{k}, y^{k})  \right)_{k \in \mathbf{N}}$ performs better than $\left(  
f(\hat{x}_{k},\hat{y}_{k}) \right)_{k \in \mathbf{N}}$ 
in all of our experiments for this example as well. 

\begin{figure}[H]
	\centering
	\includegraphics[width=0.6\textwidth]
	{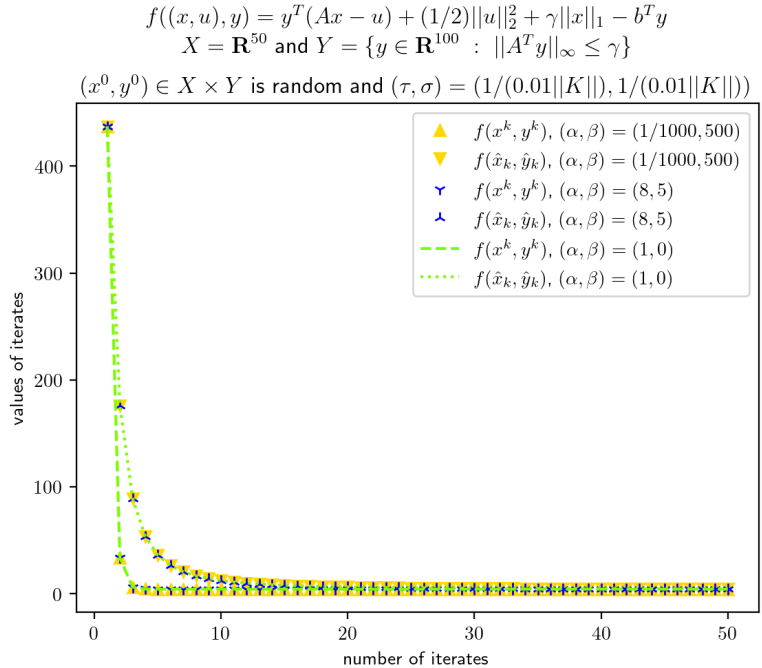}
	\caption{Convergence  with different $(\alpha, \beta)$}
	\label{fig:proximal_point_algorithm_lsl1_alphabetatausigma_00.png}
\end{figure}

 \section{Conclusion}\label{section:Conclusion}
 In this work, we introduced an iterate scheme to solve convex-concave saddle-point
 problems associated with a general convex-concave function and call it 
 alternating proximal mapping method. 
 We worked specifically on convex-concave saddle-point problems 
associated with a convex-concave function 
$f: X \times Y \to \mathbf{R} \cup \{- \infty, +\infty\}$ defined as
\[ 
(\forall (x,y) \in X \times Y) \quad 	f(x,y) =\innp{Kx, y} + g(x)  -h(y),
\] 
where $K \in \mathcal{B}(\mathcal{H}_{1} , \mathcal{H}_{2} )$, and 
$g: \mathcal{H}_{1} \to \mathbf{R} \cup \{ +\infty\}$ 
and $h: \mathcal{H}_{2} \to  \mathbf{R} \cup \{ +\infty\}$  
are proper, convex, and lower semicontinuous. 
Suppose that  $((x^{k}, y^{k}))_{k \in \mathbf{N}}$ is the iteration sequence 
generated by our iterate scheme. 
Set $(\forall k \in \mathbf{N})$ $\hat{x}_{k} := \frac{1}{k+1} \sum^{k}_{i=0} x^{i+1} $ 
and $\hat{y}_{k} := \frac{1}{k+1} \sum^{k}_{i=0} y^{i+1} $.  
Let  $(x^{*}, y^{*}) \in X \times Y$ be a saddle-point of the function $f$.
We demonstrated the convergence of $f(\hat{x}_{k}, \hat{y}_{k}) \to f(x^{*}, y^{*})  $
under some restrictions on parameters used in our iterate scheme. 
Under similar assumptions, 
we showed that every weakly sequential cluster point  of 
$\left((\hat{x}_{k},\hat{y}_{k}) \right)_{k \in \mathbf{N}}$  is a saddle-point of $f$,
and that 
$\left((x^{k},y^{k}) \right)_{k \in \mathbf{N}}$ converges to a 
saddle-point of $f$. 
 
We applied our alternating proximal mapping method
to a matrix game,
a linear program in inequality form,
and a least-squares problem with $\ell_{1}$ regularization.
We also compared our algorithm with some other primal-dual algorithms
in which involved parameters are constants.
Our numerical experiments not only validated our theoretical results, 
but they also  discovered some new and interesting results.
We founded that when we consider a fixed problem in our numerical experiments,
it is either $\left(f(\hat{x}_{k},\hat{y}_{k}) \right)_{k \in \mathbf{N}}$  or 
$\left(f(x^{k},y^{k}) \right)_{k \in \mathbf{N}}$ converges faster in every case 
(no matter how do we change initial points, problem data, and iterate schemes);
in some examples, changes of some parameters don't affect the convergence rate 
of $\left(f(\hat{x}_{k},\hat{y}_{k}) \right)_{k \in \mathbf{N}}$  or 
$\left(f(x^{k},y^{k}) \right)_{k \in \mathbf{N}}$;
in practice, when we are unsure of how to determine suitable constants for
parameters in iterate schemes, 
our algorithm is consistently a reasonable choice. 
 
  \section*{Acknowledgments}
  Hui Ouyang thanks Professor Boyd Stephen for introducing the author to the field of 
  saddle-point problem, and for his  insight and expertise comments 
   on the topic of saddle-point problems and all unselfish support.
 Hui Ouyang acknowledges 
 the Natural Sciences and Engineering Research Council of Canada (NSERC), 
 [funding reference number PDF – 567644 – 2022]. 
 
 
 \addcontentsline{toc}{section}{References}
 \bibliographystyle{abbrv}
 \bibliography{ccspp_proximity}

 \appendix
\section{Citations of the Arrow-Hurwicz Algorithm}
 We explain below the relationship between the Arrow-Hurwicz method and the  
 primal-dual proximal-point method presented in \cref{eq:pdppm}. 
 
 Let $A: \mathcal{H}_{2} \to \mathcal{H}_{1}$ be linear and bounded,  and let $z \in 
 \mathcal{H}_{1}$ and $\lambda \in \mathbf{R}_{++}$. Consider the special 
 convex-concave function $f:  
 \mathcal{H}_{1} \times \mathcal{H}_{2} \to \mathbf{R}$ defined as 
 \[ 
 (\forall (x,y) \in \mathcal{H}_{1} \times \mathcal{H}_{2}) 
 \quad f(x,y) =\innp{x, Ay} + \frac{\lambda}{2} \norm{x-z}^{2},
 \]
 which is a convex-concave function considered in \cite{ZhuChan2008efficient}.
 Let $(\bar{x},\bar{y}) \in \mathcal{H}_{1} \times \mathcal{H}_{2} $. 
 Note that
 \begin{subequations}
 	\begin{align}
 		&\nabla_{x} f(\bar{x},\bar{y}) =  A\bar{y} +\lambda(\bar{x}-z); \label{eq:nablax}\\
 		&\nabla_{y} f(\bar{x},\bar{y}) = A^{*}\bar{x}. \label{eq:nablay}
 	\end{align} 
 \end{subequations}
 Furthermore, in this case we have that
 \begin{align}  \label{eq:fnabla:y}
 	(\forall y \in \mathcal{H}_{2})	 (\forall w \in \mathcal{H}_{2}) \quad f(\bar{x},y) - 
 	f(\bar{x},\bar{y}) = \innp{\bar{x},Ay} -\innp{\bar{x}, A\bar{y} }= 
 	\innp{A^{*}\bar{x}, y-\bar{y}} = \innp{\nabla_{y} f(\bar{x},w), y-\bar{y} },
 \end{align}
 and that 
 \begin{align}    \label{eq:fnabla:x}
 	(\forall x \in \mathcal{H}_{1})	(\forall w \in \mathcal{H}_{2})  \quad \nabla_{x} f(x,w) 
 	-\nabla_{x} f(\bar{x},w)
 	=  \lambda (x-\bar{x}). 
 \end{align}
 
 Let $\alpha$ and $\beta$ be in $\mathbf{R}_{++}$,  let $k \in \mathbf{N}$, and let $Y$ 
 be a nonempty, closed,  and convex subset of  $\mathcal{H}_{2}$. 
 Based on \cref{fact:proxsubdiff} with $X=\mathcal{H}_{1}$ and \cref{eq:nablax}, 
 \begin{align*}
 	&x^{k+1} = \argmin_{x \in \mathcal{H}_{1}} \{ f(x, y^{k+1}) + \frac{1}{2 \alpha} 
 	\norm{x 
 		-x^{k}}^{2} \} \\
 	\Leftrightarrow & 0 =\nabla_{x} f(x^{k+1} ,y^{k+1})  + \frac{1}{  \alpha} (x^{k+1} 
 	-x^{k})\\
 	\Leftrightarrow & 0 = \nabla_{x} f(x^{k} ,y^{k+1})   + (\nabla_{x} f(x^{k+1} ,y^{k+1})   
 	-\nabla_{x} f(x^{k} ,y^{k+1})   )+ \frac{1}{  \alpha} (x^{k+1} -x^{k})\\
 	\stackrel{\cref{eq:fnabla:x}}{\Leftrightarrow}  & 0 = \nabla_{x} f(x^{k} ,y^{k+1})   +
 	\lambda (x^{k+1}-x^{k}) + \frac{1}{  \alpha} (x^{k+1} -x^{k})\\
 	\Leftrightarrow &  (1+\alpha \lambda ) x^{k+1} =(1+\alpha \lambda) x^{k} - \alpha 
 	\nabla_{x} f(x^{k} ,y^{k+1})  \\
 	\Leftrightarrow & x^{k+1} =  x^{k} - \frac{ \alpha }{1+\alpha \lambda } \nabla_{x} 
 	f(x^{k} 
 	,y^{k+1})  \\
 	\Leftrightarrow & x^{k+1}= \Pro_{\mathcal{H}_{1}}( x^{k} - \frac{ \alpha }{1+\alpha 
 		\lambda }  \nabla_{x} 
 	f(x^{k},y^{k+1})   ).
 \end{align*}
 In addition, in view of \cref{fact:proxsubdiff},
 \begin{align*}
 	&y^{k+1} = \argmax_{y  \in Y} \{ f(x^{k}, y) - \frac{1}{2 \beta} \norm{y
 		-y^{k}}^{2} \} = \argmin_{y  \in Y} \{ -\beta f(x^{k}, y) + \frac{1}{2 } \norm{y
 		-y^{k}}^{2} \} \\
 	\Leftrightarrow & (\forall y \in Y) \quad \innp{y^{k} -y^{k+1}, y-y^{k+1}}  - \beta 
 	f(x^{k}, 	y^{k+1})  \leq - \beta f(x^{k}, y) \\
 	\Leftrightarrow &  (\forall y \in Y) \quad \innp{y^{k} -y^{k+1}, y-y^{k+1}}   \leq \beta 
 	f(x^{k}, 	y^{k+1}) - \beta f(x^{k}, y) \\
 	\stackrel{\cref{eq:fnabla:y}}{\Leftrightarrow} &(\forall y \in Y) \quad \innp{y^{k} 
 		-y^{k+1}, y-y^{k+1}}   \leq \beta \innp{\nabla_{y} f(x^{k},y^{k}), y^{k+1}-y }\\
 	\Leftrightarrow &(\forall y \in Y) \quad  \innp{y^{k} + \beta \nabla_{y} f(x^{k} ,y^{k}) 
 		-y^{k+1}, y-y^{k+1}}   \leq   0\\
 	\Leftrightarrow & y^{k+1} =\Pro_{Y} (y^{k} + \beta \nabla_{y} f(x^{k} ,y^{k}) ),
 \end{align*}
 where in the last equivalence we use the projection theorem.

 In the following result, we consider transforming proximity mappings   to 
 expressions with (projected) subgradients, which generalizes results obtained  above. 
 \begin{lemma}  \label{lemma:pmsm}
 	Let $g:  \mathcal{H}    \to \mathbf{R} \cup \{ -\infty \}$ 
 	be proper, convex, and differentiable. 
 	Let $\alpha $   be in $\mathbf{R}_{++}$. 
 	Let $C \subseteq  \mathcal{H}  $ be nonempty, closed, and 	convex. 
 	Consider the \emph{proximity mapping of $\alpha g$ associated with $C$}
 	\[ 
 	(\forall \bar{x} \in \mathcal{H}) \quad 	\Prox_{\alpha g}^{C} \bar{x} : = \argmin_{x 
 		\in C} 
 	\left\{  g(x) +\frac{1}{2 \alpha}\norm{x-\bar{x}}^{2} \right\}.
 	\]
 	$($Clearly, $\Prox_{\alpha g}^{\mathcal{H}} =\Prox_{\alpha g}  $.$)$ Let $\bar{x}$ 
 	and 	$p$ 	be in $ \mathcal{H}$. 
 	Then we have the following results. 
 	\begin{enumerate}
 		\item  \label{lemma:pmsm:H} 
 		Suppose that $C = \mathcal{H}$ and that $(\forall x 
 		\in 	\mathcal{H})$ $\nabla 	g(x) - \nabla g(\bar{x}) = \lambda (x-\bar{x})$. 
 		$($For example, $(\forall x \in \mathcal{H})$ $g(x)=\innp{x, Ay} + \frac{\lambda}{2} 
 		\norm{x-z}^{2}$, where $A: \mathcal{K} \to 	\mathcal{H}$, $\mathcal{K}$ is a 
 		Hilbert 	space, $y$ is in $\mathcal{K}$, and $z$ is in  $ \mathcal{H}$.$)$
 		Then 
 		\[   
 		p = \Prox^{C}_{\alpha g} \bar{x} \Leftrightarrow p =\bar{x} - 
 		\frac{\alpha}{1+\alpha \lambda}\nabla g(\bar{x}),
 		\]
 		that is, 
 		\[ 
 		p= \argmin_{x \in C} \left\{  g(x) +\frac{1}{2 \alpha}\norm{x-\bar{x}}^{2} \right\}
 		\Leftrightarrow p= \Pro_{C} \left( \bar{x} - \frac{\alpha}{1+ \alpha \lambda}\nabla 
 		g(\bar{x}) \right).
 		\]
 		\item \label{lemma:pmsm:C} Suppose that
 		\begin{align}   \label{gx-gp}
 			(\forall x \in C)  \quad g(x)- g(p) = \innp{\nabla g(\bar{x}),x-p}.
 		\end{align}
 		$($For example, $(\forall x \in \mathcal{H})$  $g(x)=\innp{y, Ax} + \frac{\lambda}{2} 
 		\norm{y-z}^{2}$, 
 		where  $A: \mathcal{H} \to \mathcal{K}$, $\mathcal{K}$ is a Hilbert 
 		space, $y$ is in $\mathcal{K}$, and $z$ is in  $ \mathcal{H}$.$)$
 		Then 
 		\[
 		p = \Prox^{C}_{\alpha g} \bar{x} \Leftrightarrow p =\Pro_{C} \left( \bar{x} - \alpha 
 		\nabla g(\bar{x})  \right).
 		\]
 		that is, 
 		\[ 
 		p= \argmin_{x \in C} \left\{  g(x) +\frac{1}{2 \alpha}\norm{x-\bar{x}}^{2} \right\}
 		\Leftrightarrow p=\Pro_{C} \left( \bar{x} - \alpha \nabla g(\bar{x})  \right).
 		\]
 	\end{enumerate}
 \end{lemma}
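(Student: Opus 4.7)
The plan is to handle the two parts separately, using the first-order optimality condition for part \cref{lemma:pmsm:H} and the characterization via \cref{fact:proxsubdiff} for part \cref{lemma:pmsm:C}.

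For part \cref{lemma:pmsm:H}, since $C=\mathcal{H}$ the minimization defining $\Prox^{\mathcal{H}}_{\alpha g}\bar{x}$ is unconstrained, and the objective $g(\cdot)+\frac{1}{2\alpha}\norm{\cdot-\bar{x}}^{2}$ is convex and differentiable. So $p=\Prox^{\mathcal{H}}_{\alpha g}\bar{x}$ if and only if the gradient vanishes at $p$, i.e. $\nabla g(p)+\frac{1}{\alpha}(p-\bar{x})=0$. I would then plug in the hypothesis $\nabla g(p)-\nabla g(\bar{x})=\lambda(p-\bar{x})$ to rewrite this as $\nabla g(\bar{x})+\lambda(p-\bar{x})+\frac{1}{\alpha}(p-\bar{x})=0$, and solve the resulting scalar-affine equation in $p-\bar{x}$ to get $p-\bar{x}=-\frac{\alpha}{1+\alpha\lambda}\nabla g(\bar{x})$, which is the desired identity. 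The reverse direction is immediate by substitution.

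For part \cref{lemma:pmsm:C}, I would invoke \cref{fact:proxsubdiff} applied to the function $\alpha g$ and the set $C$: $p=\Prox^{C}_{\alpha g}\bar{x}$ is equivalent to $p\in C$ together with
\[
(\forall x\in C)\quad \innp{\bar{x}-p,\,x-p}+\alpha g(p)\leq\alpha g(x).
\]
Using the assumed identity $g(x)-g(p)=\innp{\nabla g(\bar{x}),x-p}$ for every $x\in C$, this inequality becomes $\innp{\bar{x}-p-\alpha\nabla g(\bar{x}),\,x-p}\leq 0$ for all $x\in C$. By the standard projection characterization for nonempty closed convex sets (a point $p\in C$ equals $\Pro_{C}z$ iff $\innp{z-p,x-p}\leq 0$ for all $x\in C$), applied with $z=\bar{x}-\alpha\nabla g(\bar{x})$, this is exactly $p=\Pro_{C}(\bar{x}-\alpha\nabla g(\bar{x}))$.

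I do not anticipate a serious obstacle: the two parts reduce to algebra once the right characterization of the proximity mapping is invoked, namely the vanishing-gradient condition in part \cref{lemma:pmsm:H} and \cref{fact:proxsubdiff} in part \cref{lemma:pmsm:C}. The only point that requires a little care is the direction of the scalar $\alpha$ when moving from $g$ to $\alpha g$ inside \cref{fact:proxsubdiff}, and checking that the hypothesis in part \cref{lemma:pmsm:C} is precisely what is needed to turn the ``subgradient-type'' inequality into the projection inequality without extra slack. The sample instances (the quadratic plus inner-product examples mentioned in the statement) can be verified by directly computing $\nabla g$ to ensure the respective hypotheses are met, which I would mention but not dwell on.
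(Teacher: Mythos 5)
Your proposal is correct and follows essentially the same route as the paper: part \cref{lemma:pmsm:H} via Fermat's rule (vanishing gradient of the unconstrained objective) followed by substitution of the affine-gradient hypothesis and solving for $p$, and part \cref{lemma:pmsm:C} via \cref{fact:proxsubdiff} applied to $\alpha g$ on $C$, the hypothesis \cref{gx-gp}, and the projection theorem. No gaps to report.
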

 
 \begin{proof}
 	\cref{lemma:pmsm:H}:  According to Fermat's rule or \cref{fact:proxsubdiff}, in this 
 	case,
 	\begin{align*}
 		&p = \Prox^{C}_{\alpha g} \bar{x} \\
 		\Leftrightarrow &0 = \nabla g(p) +\frac{1}{ \alpha}  (p - \bar{x}) \\
 		\Leftrightarrow &0 =\alpha \nabla g( \bar{x}) + \alpha (\nabla g(p) - \nabla g( 
 		\bar{x}) ) +  
 		(p -	\bar{x}) \\
 		\Leftrightarrow &0 =\alpha \nabla g( \bar{x}) + \alpha \lambda (p -  \bar{x} ) +  
 		(p -	\bar{x}) \\
 		\Leftrightarrow & (1+\alpha \lambda ) p = (1+\alpha \lambda )\bar{x} - \alpha \nabla 
 		g( 
 		\bar{x})\\
 		\Leftrightarrow & p= \bar{x} -\frac{\alpha}{1+\alpha \lambda} \nabla g( \bar{x}).
 	\end{align*}
 	\cref{lemma:pmsm:C}: In view of \cref{fact:proxsubdiff}, 
 	\begin{align*}
 		& p = \argmin_{x \in C}  \{ g(x) + \frac{1}{2 \alpha } \norm{x -\bar{x}}^{2} \}\\
 		\Leftrightarrow &  (\forall x \in C) \quad \innp{\bar{x}-p, x-p}  \leq  \alpha g(x) -  
 		\alpha 
 		g(p)\\
 		\stackrel{\cref{gx-gp}}{\Leftrightarrow} & (\forall x \in C) \quad \innp{\bar{x}-p, x-p}  
 		\leq  \alpha \innp{ \nabla g(\bar{x}),x-p}\\
 		\Leftrightarrow & (\forall x \in C) \quad \innp{\bar{x} - \alpha \nabla 
 			g(\bar{x})-p,x-p}  	
 		\leq 	0\\
 		\Leftrightarrow &  p= \Pro_{C} \left( \bar{x} - \alpha \nabla g(\bar{x})  \right),
 	\end{align*}
 	where, in the last equivalence, we employ the projection theorem. 
 \end{proof}

 When papers cited the Arrow-Hurwicz method, they always cited the book
 \cite{ArrowHurwiczUzawa1958}.
 Although the Arrow-Hurwicz method was formulated in differential equations on 
 \cite[Page~118]{ArrowHurwiczUzawa1958} (it is actually referred to as the 
 Arrow-Hurwicz differential equation on \cite[Page~245]{Rockafellar1970}), 
 as it is described in \cite[Section~4]{RockafellarSaddlePoints1971} and 
 \cite{Popov1980modification}, the idea of the Arrow-Hurwicz method is essentially an 
 alternating projected subgradient method when the corresponding convex-concave 
 function has some differentiable properties and its domain satisfies certain 
 requirements. 
 It is written on  \cite[Page~245]{Rockafellar1970} that in certain cases where 
 $X=Y\times Z$ is finite-dimensional and $T$ is the monotone operator associated 
 with a saddle-function $K$ of the form 
 \begin{align*}
 	K(y,z) =
 	\begin{cases}
 		L(y,z) \quad \text{if } y \in C \text{ and } z \in D,\\
 		+\infty \quad \text{if } y \in C \text{ and } z \notin D,\\
 		-\infty \quad \text{if } y \notin C, 
 	\end{cases}
 \end{align*} 
 with $C$ and $D$ polyhedral and $L$ differentiable, the general \enquote{evolution 
 	equation}
 \[   
 -\dot{x}(t) \in T(x(t)) \quad \text{for almost all }t,
 \]
 where $t \to x(t)$ is (in a suitable sense) an absolutely continuous function from $[0, 
 +\infty]$ to $X$ with derivative $t \to \dot{x}(t)$,
 reduces to the Arrow-Hurwicz differential equation 
 \cite[page~118]{ArrowHurwiczUzawa1958}.
 
 In conclusion, bearing our analysis of \cref{lemma:pmsm} in mind, 
 we see that algorithms with projected subgradients  (e.g., some special cases of the 
 Arrow-Hurwicz method)
 may have some corresponding counterparts  of algorithms with  proximity mappings
 (e.g., primal-dual proximal-point methods) 
 when the associated convex-concave function satisfies some special conditions, 
 but we think it's an open question if all algorithms with projected subgradients have 
 their counterparts  of algorithms with  proximity mappings in closed forms.

\end{document}